\documentclass[12pt]{amsart}

\usepackage[english]{babel}
\usepackage[margin=1in]{geometry}
\usepackage{amsmath}
\usepackage{amssymb}
\usepackage{amsthm}
\usepackage{float}
\usepackage{mathtools}
\usepackage{theoremref}
\usepackage{adjustbox}
\usepackage{amsfonts}
\usepackage{graphicx}
\usepackage{xcolor}
\usepackage[hidelinks]{hyperref}
\usepackage{tikz,tikz-cd}
\usepackage[shortlabels]{enumitem}
\usepackage[inkscapelatex=false]{svg}

\newcommand{\Y}{\mathcal{S}_q^Y}
\newcommand{\Sq}{\mathcal{S}_q}

\newcommand{\C}{\mathbb{C}}
\newcommand{\Z}{\mathbb{Z}}
\newcommand{\N}{\mathbb{N}}

\newcommand{\T}{\mathbb{T}}

\author[Higgins]{Vijay Higgins}
\address{Department of Mathematics, University of California, Los Angeles, CA 90095, USA}
\email{higginsv@math.ucla.edu}

\author[Silva]{Bryan Silva}
\address{Department of Mathematics, University of California, Irvine, CA 92697, USA}
\email{bjsilva1@uci.edu}

\author[Somers]{Amy Somers}
\address{Department of Mathematics, University of California, Santa Barbara, CA 93106, USA}
\email{asomers@ucsb.edu}

\author[Stephens]{David Stephens}
\address{Department of Mathematics, University of California, San Diego, CA 92093, USA}
\email{d1stephens@ucsd.edu}

\author[Sundar]{Srijan Sundar}
\address{Mathematical Institute, University of Oxford, Oxford, OX2 6GG, United Kingdom}
\email{srijan.sundar@stcatz.ox.ac.uk}

\author[Wokhlu]{Parth Wokhlu}
\address{Department of Mathematics, University of Chicago, Chicago, IL 60637, USA}
\email{parthw@uchicago.edu}

\title{$\rm{PGL}_2$ webs at roots of unity}

\thanks{
    2020 {\em Mathematics Subject Classification:} Primary 57K31. Secondary 17B37.\\
	{\em Key words and phrases: Skein algebra of a surface,  Jones-Wenzl projectors, Frobenius homomorphism}}

\theoremstyle{definition}
\newtheorem{definition}{Definition}[section]

\newtheorem*{notation}{Notation}

\theoremstyle{definition}

\theoremstyle{definition}

\theoremstyle{plain}

\theoremstyle{definition}

\theoremstyle{plain}
\newtheorem{theorem}[definition]{Theorem}

\theoremstyle{plain}
\newtheorem{lemma}[definition]{Lemma}

\theoremstyle{plain}
\newtheorem{corollary}[definition]{Corollary}

\newtheorem{proposition}[definition]{Proposition}

\theoremstyle{remark}
\newtheorem{remark}[definition]{Remark}

\theoremstyle{definition}
\newtheorem*{definition*}{Definition}

\theoremstyle{definition}
\newtheorem*{notation*}{Notation}

\theoremstyle{plain}
\newtheorem*{theorem*}{Theorem}

\theoremstyle{remark}
\newtheorem*{remark*}{Remark}

\newcommand{\jwnc}{
\begin{tikzpicture}[xscale = 0.15, yscale = 0.2, baseline=-\the\dimexpr\fontdimen22\textfont2\relax]
    \draw[line width = 0.7] (0,-2.5) -- (0,-2);
    \draw[line width = 0.7] (0,-0.5) -- (0,0.5);
    \draw[line width = 0.7] (0, 2.5) -- (0, 2);
    \draw[line width = 0.7] (-1.25,-2) -- (1.25,-2) -- (1.25, -0.5) -- (-1.25, -0.5) -- (-1.25,-2);
    \draw[line width = 0.7] (-1.25,0.5) -- (1.25,0.5) -- (1.25, 2) -- (-1.25, 2) -- (-1.25,0.5);
    \draw (-0.1, -1.25) node [inner sep=0.75pt] {$n$};
    \draw (-0.1, 1.25) node [inner sep=0.75pt] {$c$};
\end{tikzpicture}
}

\newcommand{\cjwn}{
\begin{tikzpicture}[xscale = 0.15, yscale = 0.2, baseline=-\the\dimexpr\fontdimen22\textfont2\relax]
    \draw[line width = 0.7] (0,-2.5) -- (0,-2);
    \draw[line width = 0.7] (0,-0.5) -- (0,0.5);
    \draw[line width = 0.7] (0, 2.5) -- (0, 2);
    \draw[line width = 0.7] (-1.25,-2) -- (1.25,-2) -- (1.25, -0.5) -- (-1.25, -0.5) -- (-1.25,-2);
    \draw[line width = 0.7] (-1.25,0.5) -- (1.25,0.5) -- (1.25, 2) -- (-1.25, 2) -- (-1.25,0.5);
    \draw (-0.1, 1.25) node [inner sep=0.75pt] {$n$};
    \draw (-0.1, -1.25) node [inner sep=0.75pt] {$c$};
\end{tikzpicture}
}

\def\positivecrossing{\tikz[baseline=0]{
		\draw[dashed] (0,0) circle (1);
		\draw[thick] (-.7,-.7) -- (.7,.7);
		\draw[thick] (.1,-.1) -- (.7,-.7);
		\draw[thick] (-.7,.7) -- (-.1,.1);
}}

\def\verticalresolution{\tikz[baseline=0]{
		\draw[dashed] (0,0) circle (1);
		\draw[thick] plot [smooth, tension=1] coordinates {(-.7,.7) (-.3,0) (-.7,-.7)};
		\draw[thick] plot [smooth, tension=1] coordinates {(.7,.7) (.3,0) (.7,-.7)};
}}
\def\horizontalresolution{\tikz[baseline=0]{
		\draw[dashed] (0,0) circle (1);
		\draw[thick] plot [smooth, tension=1] coordinates {(-.7,.7) (0,.3) (.7,.7)};
		\draw[thick] plot [smooth, tension=1] coordinates {(-.7,-.7) (0,-.3) (.7,-.7)};
}}
\def\trivialcomponent{\tikz[baseline=0]{
		\draw[dashed] (0,0) circle (1);
		\draw[thick] (0,0) circle (.5);
}}

\newcommand{\kauffmana}{
\begin{tikzpicture}[xscale = 0.15, yscale = 0.2, baseline=-\the\dimexpr\fontdimen22\textfont2\relax]
    \draw[line width = 0.7] (-1,-1) -- (1,1);
    \draw[line width = 0.7] (-1,1) -- (-0.2, 0.2);
    \draw[line width = 0.7] (1,-1) -- (0.2, -0.2);
\end{tikzpicture}
}

\newcommand{\kauffmanb}{
\begin{tikzpicture}[xscale = 0.15, yscale = 0.2, baseline=-\the\dimexpr\fontdimen22\textfont2\relax]
    \draw[line width = 0.7] [rounded corners] (-1,1) -- (0,0) -- (-1,-1);
    \draw[line width = 0.7] [rounded corners] (1,1) -- (0,0) -- (1,-1);
\end{tikzpicture}
}

\newcommand{\kauffmanc}{
\begin{tikzpicture}[xscale = 0.15, yscale = 0.2, baseline=-\the\dimexpr\fontdimen22\textfont2\relax]
    \draw[line width = 0.7] [rounded corners] (-1,1) -- (0,0) -- (1,1);
    \draw[line width = 0.7] [rounded corners] (-1,-1) -- (0,0) -- (1,-1);
\end{tikzpicture}
}

\newcommand{\jwn}[1]{
\begin{tikzpicture}[xscale = 0.2, yscale = 0.2, baseline=-\the\dimexpr\fontdimen22\textfont2\relax]
    \draw[line width = 0.7] (-1,-1) -- (1,-1) -- (1, 1) -- (-1, 1) -- (-1,-1);
    \draw (0, 0) node [inner sep=0.75pt] {#1};
\end{tikzpicture}
}

\newcommand{\zupper}{
\begin{tikzpicture}[xscale = 0.2, yscale = 0.2, baseline=-\the\dimexpr\fontdimen22\textfont2\relax]
    \draw (0,0) circle (4);
    \draw (0,0) circle (1);
    \draw[line width = 1.2] (-0.4,2.4) arc (100:440:2.5);
    \draw[line width = 1.2] (0, 1) -- (0, 4);
\end{tikzpicture}
}

\newcommand{\zlower}{
\begin{tikzpicture}[xscale = 0.2, yscale = 0.2, baseline=-\the\dimexpr\fontdimen22\textfont2\relax]
    \draw (0,0) circle (4);
    \draw (0,0) circle (1);
    \draw[line width = 1.2] (0,0) circle (2.5);
    \draw[line width = 1.2] (0, 1) -- (0, 2);
    \draw[line width = 1.2] (0, 3) -- (0, 4);
\end{tikzpicture}
}

\begin{document}

\begin{abstract}
We study webs on surfaces for the $\rm{PGL}_2$ skein theory specialized at roots of unity. We construct the $\rm{PGL}_2$ analogue of Chebyshev polynomials and use them to construct central elements in the skein algebra. We then focus on the cases of the 3-punctured sphere and the closed torus. We compute the Frobenius homomorphism for these two surfaces and show that the $\rm{PGL}_2$ Frobenius homomorphism for the torus is not injective when the quantum parameter $q^2$ is a root of unity of even order $n.$
\end{abstract}

\maketitle
\tableofcontents

\section{Introduction}

Skein algebras are objects in quantum topology that are situated at the intersection of algebra and topology. They are closely related to character varieties, quantum groups, cluster algebras, mapping class groups, and TQFTs. The most well-studied skein algebra is the Kauffman bracket skein algebra $\Sq(\Sigma_{g,n})$ of an oriented surface $\Sigma_{g,n}$ of genus $g$ with $n$ punctures. The algebra $\Sq(\Sigma)$ is spanned by framed link diagrams on $\Sigma$ subject to the linear skein relations of equations (\ref{KB crossing relation}) and (\ref{KB loop relation}), which are also the defining relations of the Temperley-Lieb ribbon category. The Kauffman bracket skein algebra may also be viewed as a generalization of the Jones polynomial to links in thickened surfaces.

\begin{align}
\positivecrossing&= q^{1/2} \verticalresolution + q^{-1/2} \horizontalresolution \label{KB crossing relation}\\
\trivialcomponent&=-(q+q^{-1}) \label{KB loop relation}.
\end{align}

The skein algebra is particularly interesting when the quantum parameter $q$ is specialized to a root of unity. In this situation, the algebra exhibits a rich center and admits a Frobenius homomorphism \cite{Bonahon_2015,FKL19,GJS25}.

For higher rank quantum groups, it is an important open problem to compute the Frobenius homomorphism on web basis elements due to the expected connection to canonical bases which exhibit positivity properties and are related to categorified structures \cite{MQ23,Q22}.

The $\rm{PGL}_2$ skein theory, also sometimes referred to as $\rm{SO}_3$ skein theory, arises from a certain subcategory of the Temperley-Lieb category associated to a trivalent vertex whose edges correspond to the second Jones-Wenzl projector. The defining relations appear in (\ref{eq:skein3}). The $\rm{PGL}_2$ skein theory has been studied previously in the literature in the context of applications to spatial graphs; the Yamada, Tutte, and chromatic polynomials; Langlands duality for skein modules; and TQFT \cite{Chb06,Chb12,FK09,FK10,AK16,Jor23,FK23}.

\begin{equation}\label{eq:skein3}
    \begin{matrix}\includegraphics[width=0.7\linewidth]{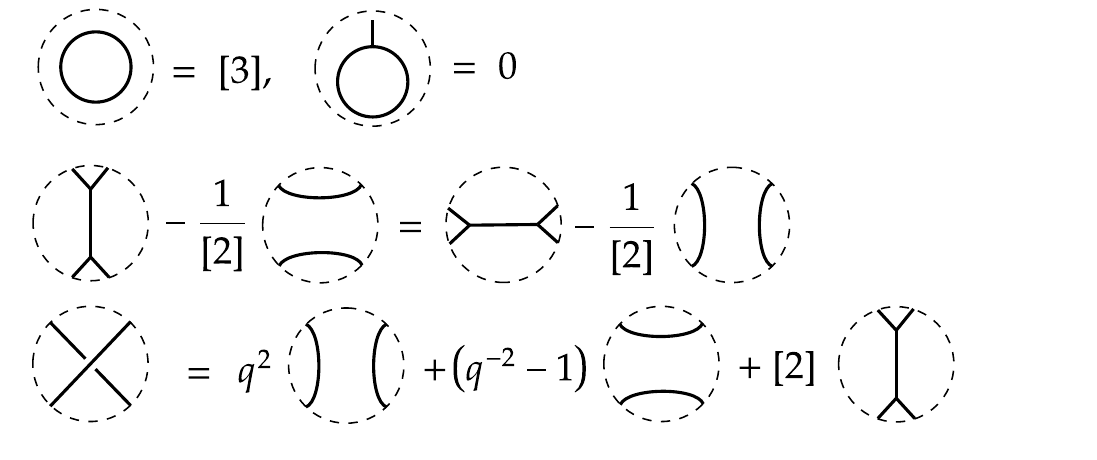}\end{matrix}
\end{equation}

In our setting, we are interested in the $\rm{PGL}_2$ skein algebra of a surface $\Y(\Sigma)$ because it is a setting in which we can study webs on surfaces while still having access to some of the tools available to us from the $\rm{SL}_2$ setting, due to the natural map $\varphi: \Y(\Sigma) \rightarrow \Sq(\Sigma)$ from the $\rm{PGL}_2$ skein algebra to the $\rm{SL}_2$ or Kauffman bracket skein algebra defined by the following assignments:

\begin{equation}
\begin{tikzpicture}[baseline=-15ex,x=0.75pt,y=0.75pt,yscale=-.7,xscale=.7]

\draw    (210,250) -- (210,30) ;
\draw    (60,250) -- (60,30) ;
\draw    (190,250) -- (190,30) ;
\draw  [fill={rgb, 255:red, 255; green, 255; blue, 255 }  ,fill opacity=1 ] (180,120) -- (220,120) -- (220,160) -- (180,160) -- cycle ;
\draw    (80,140) -- (160,140) ;
\draw [shift={(160,140)}, rotate = 180] [color={rgb, 255:red, 0; green, 0; blue, 0 }  ][line width=0.75]    (11,-3) .. controls (7,-1) and (3,0) .. (0,0) .. controls (3,0) and (7,1) .. (11,3)   ;

\draw    (340,90) -- (390,140) ;
\draw    (390,140) -- (440,90) ;
\draw    (390,140) -- (390,200) ;
\draw    (450,140) -- (530,140) ;
\draw [shift={(530,140)}, rotate = 180] [color={rgb, 255:red, 0; green, 0; blue, 0 }  ][line width=0.75]    (10.93,-3.29) .. controls (6.95,-1.4) and (3.31,-0.3) .. (0,0) .. controls (3.31,0.3) and (6.95,1.4) .. (10.93,3.29)   ;
\draw    (550,90) .. controls (580,110) and (620,110) .. (650,90) ;
\draw    (660,100) .. controls (630,120) and (610,160) .. (610,190) ;
\draw    (590,190) .. controls (590,160) and (570,120) .. (540,100) ;
\draw    (80,130) -- (80,150) ;
\draw    (450,130) -- (450,150) ;
\draw  [color={rgb, 255:red, 255; green, 255; blue, 255 }  ,draw opacity=1 ][fill={rgb, 255:red, 255; green, 255; blue, 255 }  ,fill opacity=1 ] (38,7) -- (240,7) -- (240,80) -- (38,80) -- cycle ;
\draw  [color={rgb, 255:red, 255; green, 255; blue, 255 }  ,draw opacity=1 ][fill={rgb, 255:red, 255; green, 255; blue, 255 }  ,fill opacity=1 ] (36,195) -- (238,195) -- (238,268) -- (36,268) -- cycle ;

\draw (120,110) node [anchor=north west][inner sep=0.75pt]    {$\varphi $};
\draw (490,110) node [anchor=north west][inner sep=0.75pt]    {$\varphi $};
\draw (200,140) node [inner sep=0.75pt]   [align=left] {2};
\draw (225, 140) node [anchor=west]{, \quad\ \ and};
\draw (660, 144) node [anchor=west]{.};
\end{tikzpicture}
\end{equation}

In this paper, we study $\rm{PGL}_2$ skein algebras at roots of unity. Due to the denominator appearing in the Jones-Wenzl projector on $2$ strands, we assume that $q+q^{-1}\neq0$ throughout the paper. We examine the centrality of elements that arise from threading the $\rm{PGL}_2$ analogue of the Chebyshev polynomial along knots, and we obtain explicit constructions of the Frobenius homomorphism on the web bases of the $\rm{PGL}_2$ skein algebras of the 3-punctured sphere, $\Sigma_{0,3},$ and the torus, $\Sigma_{1,0}.$

\subsection{Main results.}

In \cite{Bonahon_2015}, it is shown that central elements in the Kauffman bracket skein algebra can be constructed by an operation known as threading (see Section \ref{sec: threading and transparency}), which can be informally described as replacing components of a link by elements represented by Chebyshev polynomials $T_n$ when $q^{2n}=1.$  Here, the polynomial $T_n$ is defined by the property that $T_n(\lambda+\lambda^{-1})=\lambda^n+\lambda^{-n}$ for a formal variable $\lambda.$ In \cite{L_2015} the centrality result was generalized to a notion of transparency for skeins in a 3-manifold. L\^{e} also strengthened the result to show that $T_n$ generates any polynomial that produces transparent elements.

In our first main result, we prove the $\rm{PGL}_2$ analogue of these results, involving a family of polynomials $P_n$ in place of the Chebyshev polynomials $T_n.$ The polynomial $P_n$ is defined by the property that $P_n(\lambda+1+\lambda^{-1})=\lambda^n+1+\lambda^{-n}$ for a formal variable $\lambda.$

\begin{theorem}[Theorem \ref{Thm:transparent}]
If $q^{4n}=1,$ then applying the threading operation from (\ref{eq:threading}) to a link using the polynomial $P_n$ produces a transparent element in the $\rm{PGL}_2$ skein module of any 3-manifold, or a central element in the $\rm{PGL}_2$ skein algebra of any surface.

Furthermore, if a non-constant polynomial $p(x) \in \mathbb{C}[x]$ produces transparent elements via threading, then $q$ must be a root of unity and $p$ must be an element of the polynomial ring $\mathbb{C}[P_N(x)],$ where $N$ is the order of $q^4.$
\end{theorem}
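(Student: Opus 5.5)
Under $\varphi$, a closed $\rm{PGL}_2$ loop maps to the loop colored by the second Jones-Wenzl projector. In the Kauffman bracket skein algebra this colored loop is $S_2(x)=x^2-1$, where $x$ is the uncolored loop. With $x=\lambda+\lambda^{-1}$ we get $x^2-1=\lambda^2+1+\lambda^{-2}$. Hence $P_n(\lambda^2+1+\lambda^{-2})=\lambda^{2n}+1+\lambda^{-2n}=T_{2n}(x)+1$. So threading $P_n$ along a framed knot $K$ and applying $\varphi$ gives $T_{2n}$ threaded along $K$, plus the empty diagram.

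The first half would then follow from Bonahon--Wong and L\^e. Their results say that $T_N$-threading is transparent when $q^{2N}=1$, in our normalization. Taking $N=2n$, the condition becomes $q^{4n}=1$, matching the hypothesis. Two issues remain. First, $\varphi$ need not be injective on skein modules of 3-manifolds. Second, a $\rm{PGL}_2$ web passing through a threaded knot involves trivalent vertices. For both reasons I would not argue through $\varphi$ globally. Instead I would argue locally: transparency means that a threaded component can be isotoped across any strand of a web, that is, crossing changes are allowed. Each $\rm{PGL}_2$ strand is, inside a ball, a 2-colored Temperley-Lieb strand. Trivalent vertices can be avoided by isotoping the threaded knot past edges only. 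The local crossing-change identity for $T_{2n}$ over one uncolored strand then gives it over the doubled strand, and the Jones-Wenzl projector commutes with the move. The constant $1$ is trivially transparent. One point needs checking: the identity in the local ball, which lives in a Temperley-Lieb hom-space, must descend to the $\rm{PGL}_2$ category. This holds because the $\rm{PGL}_2$ category is a full subcategory of the (Karoubi completed) Temperley-Lieb category, so relations among its morphisms are exactly Temperley-Lieb relations.

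For the converse, I would follow L\^e's argument. Suppose $p$ threaded along a loop $\alpha$ is transparent. Test it in the solid torus, or the annulus, against a $\rm{PGL}_2$ arc or a meridian loop $\beta$ linking $\alpha$. Expand $p$ in the basis given by loops colored by $2k$-projectors, i.e.\ polynomials in $\lambda^2+\lambda^{-2}$. The crossing change of $\beta$ through $\alpha$ acts on the $2k$-colored loop by the Hopf-link eigenvalue ratio; for the 2-colored meridian this is a scalar like $(q^{4k+2}+1+q^{-4k-2})$-type quantities. Equivalently, pass to $\lambda$: transparency forces $p(\lambda^2+1+\lambda^{-2})$, viewed as a Laurent polynomial $f(\lambda^2)$, to satisfy $f(\mu)=f(q^{4}\mu)$ or a similar shift invariance. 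Together with the $\mu\leftrightarrow\mu^{-1}$ symmetry, this forces $f$ to be a Laurent polynomial in $\mu^N$ symmetric under inversion, where $N$ is the order of $q^4$. If $q^4$ is not a root of unity, $f$ must be constant, contradicting non-constancy. Symmetric Laurent polynomials in $\mu^N$ form the ring $\mathbb{C}[\mu^N+\mu^{-N}]=\mathbb{C}[P_N(x)]$, since $\mu^N+\mu^{-N}=P_N-1$.

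I expect the main obstacle to be deriving the shift-invariance precisely. I would first test the annulus threaded by $p$ in the $\rm{PGL}_2$ skein module of the solid torus or of $S^1\times S^2$. There I would compute the difference between the two sides of a crossing with a 2-colored strand, using the fusion of the colored loop with the 2-colored strand via $\varphi$ and Temperley-Lieb colored-Hopf-link formulas. The goal is to get the twisted eigenvalue $q^{\pm 4}$ on the $\lambda^2$ grading, so that transparency forces the coefficients of $\mu^j$ with $q^{4j}\neq1$ to vanish.
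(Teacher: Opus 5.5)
\textbf{Forward direction.} The proof has a gap at exactly the point you flag. A crossing change of a threaded \emph{closed} component with an edge is not an identity in a ball, because the parallel copies close up. The identity lives in the skein module of a solid torus with two boundary points: it is $P_n(z^*)=P_n(z_*)$ in $A_{1,1}^Y$, and its Kauffman bracket image lives in $A_{2,2}$. Pulling the Bonahon--Wong/L\^e identity back therefore requires $\varphi\colon A_{1,1}^Y\to A_{2,2}$ to be injective. That does not follow from $\mathrm{PGL}_2$ being a full subcategory of Karoubi-completed Temperley--Lieb, since local equivalence does not control skein modules of non-simply-connected pieces. The closed torus shows this: there $\varphi$ is not injective, although the local theories agree. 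Injectivity on $A_{1,1}^Y$ is true, but it is genuine input. The paper gets it from the explicit basis $\{u^k,s,\omega z^j\}$ and the independence of its image in $A_{2,2}$.

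Separately, your normalization is off by a factor of two. With crossings weighted by $q^{\pm1/2}$, in $A_{1,1}$ one has $T_{2n}(x^*)-T_{2n}(x_*)=(q^{n}-q^{-n})(u^{2n}-u^{-2n})$ up to sign, so $T_{2n}$ passes a single strand only if $q^{2n}=1$. When $q^{2n}=-1$, which $q^{4n}=1$ allows (e.g.\ $q=e^{2\pi i/8}$, $n=2$), it picks up a sign $-1$ per strand. It is the doubling of the $\mathrm{PGL}_2$ edge that cancels this sign, so the single-strand identity you invoke is false there.

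\textbf{Converse.} This is where the real work is, and the mechanism you sketch does not reach it. Hopf-link eigenvalues only describe crossing changes with a closed meridian, and that test is too weak. The $JW_2$-colored meridian acts on the $S_{2k}$-colored core by $[3(2k+1)]/[2k+1]=q^{2(2k+1)}+1+q^{-2(2k+1)}$, which equals $[3]$ whenever $q^{4k}=1$ or $q^{4k+4}=1$. Take $q=e^{2\pi i/8}$, so $N=2$. The $\mathrm{PGL}_2$ loop itself, $p(x)=x$, passes this test, since $q^6+1+q^{-6}=1=[3]$. Yet $z^*-z_*=(q^2-q^{-2})(u-u^{-1})\neq 0$, and $x\notin\C[P_2(x)]$. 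So $f(\mu)=f(q^4\mu)$ is not equivalent to the eigenvalue condition. You must test against an arc and compute $p(z^*)-p(z_*)$ in $A_{1,1}^Y$, which is precisely the step you leave open.

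The idea you are missing is the paper's twist-invariant element. One has $z^*=q^2u+q^{-2}u^{-1}+\chi$ and $z_*=q^{-2}u+q^2u^{-1}+\chi$ with $\chi=[2]s-u$. The relation $su=s+\frac{1}{[2]}(u^2-1)$ gives $u(\chi-1)=\chi-1$. Induction on the recursion for $P_n$ then yields $P_n(z^*)=q^{2n}u^n+q^{-2n}u^{-n}+R_n(\chi-1)$, with the same $R_n$ for $z_*$. Hence $P_n(z^*)-P_n(z_*)=(q^{2n}-q^{-2n})(u^n-u^{-n})$. Expanding $p=\sum c_iP_i$ and using the independence of $\{u^i\}$ in $A_{1,1}^Y$ gives $c_i(q^{2i}-q^{-2i})=0$, which is exactly your shift-invariance. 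The same formula at $q^{4n}=1$ proves the forward direction directly, with no appeal to L\^e or to injectivity of $\varphi$.
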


Along the way, we prove several results about versions of the skein algebra of the annulus in Section \ref{sec:annulus}.

The transparent elements we construct are in the image of an expected Frobenius map $\mathcal{S}_1^Y(M) \rightarrow \mathcal{S}_q^Y(M).$ In the context of $\rm{SL}_d$ skein algebras for $d \geq 3,$ it is an important problem to describe the image of the Frobenius map for webs \cite{BH24,KW24,Hig25,KLW25,CHW26}, due to expected connections with bases for skein algebras which would be analogous to the bracelets basis for the Kauffman bracket skein algebra \cite{FG00,MS17,T14,Q22,MQ23}.  Since $\rm{PGL}_2$ skein theory is in some sense the simplest skein theory that requires the use of webs, it is natural to attempt to study the image of the Frobenius homomorphism in this setting. We make progress in this direction by studying the image of the $\rm{PGL}_2$ Frobenius homomorphism on basis webs for both the 3-punctured sphere and the closed torus.

By work of Chbili \cite{Chb12}, the $\rm{PGL}_2$ skein algebra of the 3-punctured sphere, $\Sq^Y(\Sigma_{0,3})$ has a monomial basis generated by $\{a,b,c,d\}$ where the elements $a,b,c$ are represented by boundary loops and the element $d$ is a linear combination of webs defined in Section \ref{Sec: Frobenius pants}.

\begin{theorem}[Theorem \ref{Thm:Frobenius pants}]

When $q^2$ is a root of unity of order $n$ and $t=q^{n^2}$ there is an injective algebra homomorphism

\begin{equation*}
Fr^Y: \mathcal{S}_t^Y(\Sigma_{0,3}) \rightarrow \mathcal{S}_q^Y(\Sigma_{0,3})
\end{equation*}

defined by

\begin{equation*}
Fr^Y(l)=P_n(l) \text{ for } l \in \{a,b,c\}
\end{equation*}

and

\begin{equation*}
Fr^Y(d)=\begin{cases}
d\cdot O_n(a)O_n(b)O_n(c) \text{ if $n$ odd,}\\
E_n(a)E_n(b)E_n(c) \text{ if $n$ even},
\end{cases}
\end{equation*}

Here, $O_n(x)$ is defined to be the unique polynomial satisfying $O_n(x^2-1)=\frac{T_n(x)}{x},$ and $E_n(x)$ is defined to be the unique polynomial satisfying $E_n(x^2-1)=T_n(x).$

\end{theorem}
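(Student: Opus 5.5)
Since $\Y(\Sigma_{0,3})$ is, by Chbili's result, generated by $a,b,c,d$ with $a,b,c$ central and a monomial basis $\{a^ib^jc^kd^\epsilon\}$, I will first pin down a presentation. I expect $\Y(\Sigma_{0,3})$ to be the commutative algebra $\C[a,b,c,d]/(R_q)$, with a single quadratic relation of the form $d^2 = F_q(a,b,c)\,d + G_q(a,b,c)$, where $\epsilon\in\{0,1\}$ reflects this relation. I will obtain $R_q$ through the map $\varphi$ to $\Sq(\Sigma_{0,3})=\C[x,y,z]$, with $x,y,z$ the boundary curves. A boundary loop labeled by the second projector maps to $x^2-1$, so $\varphi(a)=x^2-1$ and similarly for $b,c$. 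I will compute $\varphi(d)$ directly by expanding the projectors in the defining webs of $d$. I anticipate that $\varphi(d)$ is, up to constants and lower terms, $xyz$ plus a polynomial in $x^2,y^2,z^2$. This is consistent with the $n$-odd formula, where $d$ is multiplied by the odd functions $T_n(x)/x$.

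Because $\Sq(\Sigma_{0,3})$ is a domain and $\varphi$ is injective on this basis (it separates parities in $x,y,z$), I can check the homomorphism property inside $\C[x,y,z]$. There the classical Frobenius is $x\mapsto T_n(x)$. Concretely, define $Fr^Y$ on generators as stated. It is a well-defined algebra map once I verify $R_t(Fr^Y(a),\dots,Fr^Y(d))=0$ in $\Y$, which is equivalent to this identity holding after applying $\varphi$. For the generators $a,b,c$ I use
\[
P_n(x^2-1)=T_n(x)^2-1 ,
\]
which follows from $P_n(\lambda+1+\lambda^{-1})=\lambda^n+1+\lambda^{-n}$ by substituting $\lambda=\mu^2$. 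This gives $\varphi(Fr^Y(a))=T_n(x)^2-1=\varphi_t(a)|_{x\mapsto T_n(x)}$. The target claim is therefore that $\varphi\circ Fr^Y$ equals the restriction of the Bonahon--Wong Frobenius $x\mapsto T_n(x)$ (with $t=q^{n^2}$) composed with $\varphi_t$.

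Everything then reduces to verifying that $\varphi(Fr^Y(d))=\varphi_t(d)|_{x,y,z\mapsto T_n}$. This is where the parity cases appear. Writing $\varphi_t(d)=t^{\alpha}xyz+H_t(x^2,y^2,z^2)$, substituting $T_n$ gives $t^\alpha T_n(x)T_n(y)T_n(z)+H_t(T_n(x)^2,\dots)$.
\begin{itemize}
\item For $n$ odd, the coefficient $t^\alpha$ should match the one for $q$ (using $q^{2n}=1$). Moreover $T_n(x)/x=O_n(x^2-1)$ is a polynomial in $a$, so $\varphi(d\,O_n(a)O_n(b)O_n(c))$ reproduces the cubic term. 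The remaining $H$ terms have to match via $T_n^2=P_n(a)+1$.
\item For $n$ even, $T_n(x)=E_n(x^2-1)$ is itself even. I then expect the whole expression to lie in the $\varphi$-image of $\C[a,b,c]$. The constant coefficients, or sign factors $t^\alpha$, are where $q^{n^2}$ enters, and I expect that either $H_t$ vanishes after substitution or its terms combine so that the result is $E_n(a)E_n(b)E_n(c)$.
\end{itemize}
This explicit computation of $\varphi(d)$, together with matching the scalar prefactors for both parities, is the main obstacle; I would start by computing $\varphi(d)$ carefully using the Jones--Wenzl expansion.

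Injectivity then follows. Bonahon--Wong's $\Sq$ Frobenius is injective on $\C[x,y,z]$ (algebraically independent $T_n$'s), and $\varphi_t$ is injective, so $\varphi\circ Fr^Y=Fr\circ\varphi_t$ is injective, and hence $Fr^Y$ is injective.
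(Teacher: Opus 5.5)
Your overall strategy is the paper's. You use injectivity of $\varphi_q$ on $\Sigma_{0,3}$ to lift $Fr\circ\varphi_t$, check the diagram on generators, handle $a,b,c$ via $P_n(x^2-1)=T_n(x)^2-1$, and get injectivity from that of $Fr$ and $\varphi_t$. You do not need the presentation $\C[a,b,c,d]/(R_q)$. Once $Fr\circ\varphi_t$ sends each generator into the subalgebra $\varphi_q(\Y(\Sigma_{0,3}))$, the map $\varphi_q^{-1}\circ Fr\circ\varphi_t$ is automatically an algebra homomorphism.

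The gap is the generator $d$. This is the one step that is not formal, and you leave it as an anticipated computation. The missing fact is that $d$ is \emph{defined} as $d=p-\frac{1}{[2]}(a+b+c+1)$, where $p$ is the theta-shaped web. Expanding the projectors gives $\varphi_v(p)=xyz+\frac{1}{[2]_v}(x^2+y^2+z^2-2)$. Since $\varphi_v(a+b+c+1)=x^2+y^2+z^2-2$, you get $\varphi_v(d)=xyz$ exactly, for both $v=t$ and $v=q$. There is no prefactor $t^{\alpha}$ and no $H_v$. Then $Fr(\varphi_t(d))=T_n(x)T_n(y)T_n(z)$. This equals $\varphi_q\bigl(d\,O_n(a)O_n(b)O_n(c)\bigr)$ for $n$ odd and $\varphi_q\bigl(E_n(a)E_n(b)E_n(c)\bigr)$ for $n$ even, which finishes the proof.

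The mechanisms you sketch for handling a nonzero $H$ would not work.
\begin{itemize}
\item For $n$ even, you must match $t^{\alpha}T_n(x)T_n(y)T_n(z)+H_t(T_n(x)^2,T_n(y)^2,T_n(z)^2)$ with $\varphi_q(E_n(a)E_n(b)E_n(c))=T_n(x)T_n(y)T_n(z)$. Since $T_n(x),T_n(y),T_n(z)$ are algebraically independent, this forces $t^{\alpha}=1$ and $H_t=0$. So ``its terms combine'' is not an option.
\item For $n$ odd, the identity $T_n^2=P_n(a)+1$ would lift $H_t(T_n(x)^2,\dots)$ to the term $H_t(P_n(a)+1,\dots)\in\C[a,b,c]$. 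The stated $Fr^Y(d)$ contains no such term. Instead, $d\,O_n(a)O_n(b)O_n(c)$ contributes $H_q(x^2,y^2,z^2)\prod_{u\in\{x,y,z\}}T_n(u)/u$, which is a different polynomial.
\end{itemize}
So the formulas in the statement hold precisely because of the normalization $\varphi_v(d)=xyz$. That computation is what your proposal still has to supply. Once you have it, the root-of-unity matching of the coefficient of $xyz$ and the speculation about $H$ become unnecessary.
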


In the work \cite{ESS_2025} a basis for the $\rm{PGL}_2$ skein algebra of the torus was constructed that is analogous to the Chebyshev basis of Frohman and Gelca \cite{FG00}. The basis is $\{(k,l)_Y, w, \varnothing: (k,l) \in \Z^2\backslash \{(0,0)\}\},$ where $(k,l)_Y$ is a certain polynomial in a torus curve and $w$ is a linear combination of webs given in Section \ref{sec: Frobenius homomorphism for torus}.

\begin{theorem}[Theorem \ref{thm: Frobenius torus}]
When $q^2$ is a root of unity of order $n$ and $t=q^{n^2}$ there is an algebra homomorphism

\begin{equation*}
Fr^Y:\mathcal{S}_t^Y(\Sigma_{1,0})\rightarrow \mathcal{S}_q^Y(\Sigma_{1,0})
\end{equation*}

defined on the basis of $\mathcal{S}_t^Y(\Sigma_{1,0})$ by

\begin{align*}
Fr^Y((k, l)_Y) &= (nk,nl)_Y\\
Fr^Y(w) &= \alpha_n w\\
Fr^Y(\varnothing)&=\varnothing,
\end{align*}

where

\begin{align*}
\alpha_n = \begin{dcases}
0, & n \text{ even} \\ 
\frac{4}{[2]_{q}^2}, & n \text{ odd}.
\end{dcases}
\end{align*}
\end{theorem}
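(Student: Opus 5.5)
The plan is to construct $Fr^Y$ by restricting the Kauffman bracket Frobenius homomorphism of Bonahon--Wong, $Fr:\mathcal{S}_t(\Sigma_{1,0})\to\mathcal{S}_q(\Sigma_{1,0})$, along the embedding $\varphi$. Concretely, I would first check that $Fr$ carries $\varphi(\mathcal{S}_t^Y(\Sigma_{1,0}))$ into $\varphi(\mathcal{S}_q^Y(\Sigma_{1,0}))$. If $\varphi$ is injective on the torus, which I expect follows from the basis of \cite{ESS_2025}, then defining $Fr^Y=\varphi^{-1}\circ Fr\circ\varphi$ automatically gives an algebra homomorphism. Its values on the basis can then be read off by computing $Fr$ on the images $\varphi((k,l)_Y)$, $\varphi(w)$ and $\varphi(\varnothing)$.

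\textbf{Curve elements.} Write $(k,l)_Y$ in terms of a simple torus curve $\gamma$ of slope $(k/d,l/d)$, with $d=\gcd(k,l)$. Up to normalization, $\varphi((k,l)_Y)$ should be the Frohman--Gelca element obtained from the second Jones--Wenzl projector threaded along $\gamma$, that is, a polynomial in $\gamma$ built from $P_d$ applied to $\gamma^2-1$. By Bonahon--Wong,
\begin{equation*}
Fr(\gamma)=T_n(\gamma)=(n\cdot k/d,\,n\cdot l/d)_T .
\end{equation*}
The identity $P_d(\lambda+1+\lambda^{-1})$ together with the Frohman--Gelca product-to-sum formula at $q=t$ versus $q$ should then give $Fr((k,l)_Y)=(nk,nl)_Y$. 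The key point is that $Fr$ multiplies slopes by $n$ in the $T$-basis: $(a,b)_T\mapsto(na,nb)_T$. I expect $(k,l)_Y$ to be a fixed $\Z$-linear combination of $(2k,2l)_T$, $(k,l)_T$-type terms and $\varnothing$, so this linearity in slope transfers directly to the $Y$-basis. I would verify this by writing $(k,l)_Y=\varphi^{-1}\bigl((2k,2l)_T+\cdots\bigr)$ explicitly from its definition in \cite{ESS_2025}. The case $\varnothing$ is trivial.

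\textbf{The web $w$ (main obstacle).} The element $w$ involves trivalent vertices whose Jones--Wenzl denominators $[2]_t$ and $[2]_q$ differ, and $\varphi(w)$ is not a combination of curves with well-behaved Frobenius images. My first attempt would be to express $\varphi(w)$ in the Frohman--Gelca basis of $\mathcal{S}_t(\Sigma_{1,0})$. Since $w$ is invariant under the mapping class group action relevant here and lies in the "odd" sector, I expect $\varphi(w)$ to be a multiple of a combination of the three elements $(1,0)_T$, $(0,1)_T$, $(1,1)_T$ multiplied together appropriately, or more likely a product form $c\,(1,0)_T(0,1)_T(1,1)_T+\dots$. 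Applying $Fr$ then replaces slopes by $n$-multiples.

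Next comes a parity argument. For $n$ even, every Frobenius image lies in the span of $(a,b)_T$ with $a,b$ both even. Such elements correspond to the $\mathbb{Z}/2$-homology-trivial part, which $\varphi(w)$, carrying odd homology, cannot meet, so the coefficient must vanish. For $n$ odd, the $\mathbb{Z}/2$ class is preserved. I would then compare with $w$ itself by evaluating $\varphi(w)$ expansions: the ratio of normalizations $[2]_t^{-2}$ versus $[2]_q^{-2}$ appears, and with $t=q^{n^2}$, $n$ odd, we get $[2]_t=\pm2$ up to sign conventions. This yields $\alpha_n=4/[2]_q^2$.

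To confirm the full identity rather than just the coefficient on the leading term, I would use centrality. Elements $Fr(x)$ are central when $n$ is odd, up to the appropriate twist. The centralizer-type computations of Section~\ref{sec: Frobenius homomorphism for torus} should force $Fr(\varphi(w))$ to be a scalar multiple of $\varphi(w)$ plus curve terms, and comparing a single coefficient then fixes everything.
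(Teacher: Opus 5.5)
\textbf{The proposal has a genuine gap: its central step fails.} Your construction $Fr^Y=\varphi^{-1}\circ Fr\circ\varphi$ requires $\varphi:\mathcal{S}_q^Y(\T^2)\to\mathcal{S}_q(\T^2)$ to be injective. The paper explicitly notes that this is false for the closed torus (citing \cite{ESS_2025}), and that is why the torus needs a different argument from the pair of pants.

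You can see where the kernel lies. Since $P_g(\gamma^2-1)-1=T_g(\gamma)^2-2=T_{2g}(\gamma)$, we get $\varphi((k,l)_Y)=(2k,2l)_T$. Applying $\varphi$ to the relation $(1,0)_Y\,w=-2w$ gives $\bigl((2,0)_T+2\bigr)\varphi(w)=0$, so $\varphi(w)=0$ in the domain $\mathcal{S}_q(\T^2)$.

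Hence the Kauffman bracket side carries no information about $w$ at all. Your expectation that $\varphi(w)$ is a nonzero combination with odd $\Z/2$-homology is wrong. The parity argument and the comparison of $[2]_t^{-2}$ with $[2]_q^{-2}$ therefore have nothing to act on, and $\alpha_n$ cannot be extracted from $Fr$. The centrality remark at the end does not repair this.

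\textbf{What survives.} The curve part of your plan is fine: $Fr((2k,2l)_T)=(2nk,2nl)_T=\varphi((nk,nl)_Y)$, so the basis assignment is compatible with Bonahon--Wong. However, any linear lift $L$ with $\varphi\circ L=Fr\circ\varphi$ is only multiplicative modulo $\ker\varphi=\C w$. Both the value of $L(w)$ and the $w$-coefficients of products must be determined inside $\mathcal{S}_q^Y(\T^2)$ itself.

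\textbf{The paper's route.} This is exactly what the paper does, using the product-to-sum formulas of \cite{ESS_2025}. First, $Fr^Y(w)$ is forced. Since $t^2=1$, one has $[2]_t h=(1,0)(0,1)-(-1,1)$. Thread by $P_n=Y_n+1$ and expand $(n,0)_Y(0,n)_Y$ with the multiplication table; its $w$-coefficient is $\frac{2+2(-1)^{n+1}}{[2]_q^2}$. Second, multiplicativity is checked on pairs of basis elements: $w^2$, $w\,(k,l)_Y$, and $(ik,il)_Y(jr,js)_Y$. In the last case $q^{2d_n}=1$ because $n\mid d_n$. The $w$-coefficient at $q$ equals $\alpha_n\beta$ when $n$ is odd, and both sides lose their $w$-terms when $n$ is even. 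None of this is visible through $\varphi$.
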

The Frobenius homomorphism can be used to characterize the center of the skein algebra of the torus, which was carried out in \cite{ESS_2025}.

By the description of the $\rm{PGL}_2$ Frobenius homomorphism for the torus we obtain the following corollary.

\begin{corollary}\label{cor: non-injective}
The $\rm{PGL}_2$ Frobenius homomorphism for the torus $Fr^Y: \mathcal{S}_t^Y(\Sigma_{1,0}) \rightarrow \mathcal{S}_q^Y(\Sigma_{1,0})$ is not injective if $n:=\text{ord}(q^2)$ is even but is injective if $n$ is odd.
\end{corollary}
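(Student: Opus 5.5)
The plan is to read the corollary off the explicit formula for $Fr^Y$ in Theorem \ref{thm: Frobenius torus}, using that $\{(k,l)_Y, w, \varnothing : (k,l)\in\Z^2\backslash\{(0,0)\}\}$ is a basis of $\mathcal{S}_t^Y(\Sigma_{1,0})$ and of $\mathcal{S}_q^Y(\Sigma_{1,0})$, as constructed in \cite{ESS_2025}. We need the basis statement at both parameters $t=q^{n^2}$ and $q$. Both satisfy $t+t^{-1}\neq 0$ and $q+q^{-1}\neq 0$ under the paper's standing assumption; for $t$ this still has to be checked, or is assumed so that $\mathcal{S}_t^Y$ is defined.

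\emph{Even $n$.} Here $\alpha_n=0$, so $Fr^Y(w)=0$. Since $w$ is a basis element of $\mathcal{S}_t^Y(\Sigma_{1,0})$, it is nonzero, and so it lies in the kernel. Hence $Fr^Y$ is not injective. The only point to confirm is that $w\neq 0$ in $\mathcal{S}_t^Y(\Sigma_{1,0})$, and this follows directly from the basis theorem.

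\emph{Odd $n$.} Here $\alpha_n = 4/[2]_q^2$, which is a well-defined nonzero scalar because $[2]_q=q+q^{-1}\neq 0$. The map $Fr^Y$ sends the basis elements
\[
(k,l)_Y \mapsto (nk,nl)_Y, \qquad w\mapsto \alpha_n w, \qquad \varnothing\mapsto\varnothing .
\]
The assignment $(k,l)\mapsto(nk,nl)$ is injective on $\Z^2\backslash\{(0,0)\}$, and its image avoids $(0,0)$. So $Fr^Y$ sends distinct basis elements to nonzero scalar multiples of distinct basis elements of $\mathcal{S}_q^Y(\Sigma_{1,0})$. Concretely, suppose a finite combination $x=\sum c_{k,l}(k,l)_Y + c_w w + c_\varnothing\varnothing$ satisfies $Fr^Y(x)=0$. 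Then
\[
\sum c_{k,l}(nk,nl)_Y+\alpha_n c_w w+c_\varnothing\varnothing=0 .
\]
Linear independence of the target basis forces every coefficient to vanish, so $x=0$.

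The only potential obstacle is the identification of the basis conventions. The elements $(k,l)_Y$ might be defined only up to the relation $(k,l)_Y=(-k,-l)_Y$, as for Frohman–Gelca curves, in which case the index set is really $(\Z^2\backslash\{(0,0)\})/\pm$. In that case I would check that multiplication by $n$ is still injective on this quotient, which it is, since $(nk,nl)=\pm(nk',nl')$ implies $(k,l)=\pm(k',l')$. With that settled, both cases are immediate linear algebra.
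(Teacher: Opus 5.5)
Your proposal is correct and is the paper's own argument, which simply reads the corollary off the basis formula in Theorem \ref{thm: Frobenius torus}: when $n$ is even, $w \mapsto 0$; when $n$ is odd, distinct basis elements go to nonzero multiples of distinct basis elements. The point you left open about $t$ is immediate: $t^2 = q^{2n^2} = 1$ gives $t = \pm 1$, so $[2]_t = \pm 2 \neq 0$ and the basis of $\mathcal{S}_t^Y(\Sigma_{1,0})$ is available.
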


We note that the non-injectivity of $Fr^Y$ for the case of the torus when $n$ is even is in contrast to the injectivity of the Frobenius homomorphism for the Kauffman bracket skein algebra \cite{Bonahon_2015}. Although in \cite{CL25} it was discovered that the Frobenius homomorphism for skein modules of 3-manifolds is sometimes non-injective, Corollary \ref{cor: non-injective} contains the first instance to our knowledge of non-injectivity for a Frobenius homomorphism in the case of a surface.

\section{Preliminaries}

Throughout the paper we will assume that $q^{1/2} \in \mathbb{C}$ represents a complex number such that $q+q^{-1} \neq 0.$ Although the paper will focus on $\rm{PGL}_2$ skein modules $\Y(M)$, we will use some tools from the theory of the Kauffman bracket skein module $\Sq(M)$. 

\begin{definition}
The Kauffman bracket skein module $\Sq(M)$ of an oriented 3-manifold $M$ is the $\mathbb{C}$-vector space spanned by isotopy classes of framed links in $M$ modulo the Kauffman bracket skein relations:

\begin{align}
\positivecrossing&= q^{1/2} \verticalresolution + q^{-1/2} \horizontalresolution\\
\trivialcomponent&=-(q+q^{-1}).
\end{align}

When $M=\Sigma \times (0,1)$ is a thickened surface, $\Sq(M)$ is endowed with an algebra structure and we denote the skein algebra by $\Sq(\Sigma).$ The product of two links $L_1 \cdot L_2 \in \Sq(\Sigma)$ is given by isotoping $L_1$ to obtain $L_1' \subset \Sigma \times (1/2,1),$ isotoping $L_2$ to obtain $L_2' \subset  \Sigma \times (0,1/2)$ and then taking $L_1' \cup L_2' \subset \Sigma\times(0,1).$ The skein algebra is an associative, unital algebra over $\mathbb{C}$ with the empty link serving as the multiplicative unit.
\end{definition}

When working with skein algebras of thickened surfaces, we often work with link diagrams on the surface.

\begin{definition}
A $\rm{PGL}_2$ web in an oriented $3$-manifold $M$ is an embedded trivalent graph in $M$ equipped with a thin oriented surface which deformation retracts onto the web. We do not require the graph to be connected, and we permit the empty graph. In the case the graph consists of vertexless loops, the associated web will be the same as a framed link in $M.$
\end{definition}

\begin{definition}
The $\rm{PGL}_2$ skein module $\Y(M)$ of an oriented 3-manifold $M$ is the $\mathbb{C}$-vector space spanned by isotopy classes of $\rm{PGL}_2$ webs in $M$ modulo the $\rm{PGL}_2$ skein relations:

\begin{equation}\label{I-H}
    \begin{matrix}\includegraphics[width=0.7\linewidth]{Threading-2/SqY_Relations.pdf}\end{matrix}
\end{equation}

Here, $[2]_q=q+q^{-1}$ and $[3]_q=q^2+1+q^{-2}$ denote quantum integers. We will often leave off the subscript of $[k]_q$ when the context is clear.

When $M=\Sigma \times (0,1)$ is a thickened surface, $\Y(M)$ is endowed with an algebra structure and we denote the $\rm{PGL}_2$ skein algebra of $\Sigma$ by $\Y(\Sigma).$ The product of two webs $W_1 \cdot W_2 \in \Y(\Sigma)$ is given by isotoping $W_1$ to obtain $W_1' \subset \Sigma \times (1/2,1),$ isotoping $W_2$ to obtain $W_2' \subset \Sigma \times (0,1/2)$ and then taking $W_1' \cup W_2' \subset \Sigma\times(0,1).$ The skein algebra is an associative, unital algebra over $\mathbb{C}$ with the empty web serving as the multiplicative unit.
\end{definition}

When computing with $\rm{PGL}_2$ skein algebras of surfaces we will often work with web diagrams on the surface. We will sometimes refer to the skein relation from the second line of (\ref{I-H}) as an ``I-H move."

We recall there exists a homomorphism $\varphi: \Y(M) \rightarrow \Sq(M)$ from the $\rm{PGL}_2$ skein module to the Kauffman bracket skein module. The map $\varphi$ is defined on a web by the following local replacements,

\begin{equation}\label{eq:phimap}
\begin{tikzpicture}[baseline=-15ex,x=0.75pt,y=0.75pt,yscale=-.7,xscale=.7]

\draw    (210,250) -- (210,30) ;
\draw    (60,250) -- (60,30) ;
\draw    (190,250) -- (190,30) ;
\draw  [fill={rgb, 255:red, 255; green, 255; blue, 255 }  ,fill opacity=1 ] (180,120) -- (220,120) -- (220,160) -- (180,160) -- cycle ;
\draw    (80,140) -- (160,140) ;
\draw [shift={(160,140)}, rotate = 180] [color={rgb, 255:red, 0; green, 0; blue, 0 }  ][line width=0.75]    (11,-3) .. controls (7,-1) and (3,0) .. (0,0) .. controls (3,0) and (7,1) .. (11,3)   ;

\draw    (340,90) -- (390,140) ;
\draw    (390,140) -- (440,90) ;
\draw    (390,140) -- (390,200) ;
\draw    (450,140) -- (530,140) ;
\draw [shift={(530,140)}, rotate = 180] [color={rgb, 255:red, 0; green, 0; blue, 0 }  ][line width=0.75]    (10.93,-3.29) .. controls (6.95,-1.4) and (3.31,-0.3) .. (0,0) .. controls (3.31,0.3) and (6.95,1.4) .. (10.93,3.29)   ;
\draw    (550,90) .. controls (580,110) and (620,110) .. (650,90) ;
\draw    (660,100) .. controls (630,120) and (610,160) .. (610,190) ;
\draw    (590,190) .. controls (590,160) and (570,120) .. (540,100) ;
\draw    (80,130) -- (80,150) ;
\draw    (450,130) -- (450,150) ;
\draw  [color={rgb, 255:red, 255; green, 255; blue, 255 }  ,draw opacity=1 ][fill={rgb, 255:red, 255; green, 255; blue, 255 }  ,fill opacity=1 ] (38,7) -- (240,7) -- (240,80) -- (38,80) -- cycle ;
\draw  [color={rgb, 255:red, 255; green, 255; blue, 255 }  ,draw opacity=1 ][fill={rgb, 255:red, 255; green, 255; blue, 255 }  ,fill opacity=1 ] (36,195) -- (238,195) -- (238,268) -- (36,268) -- cycle ;

\draw (120,110) node [anchor=north west][inner sep=0.75pt]    {$\varphi $};
\draw (490,110) node [anchor=north west][inner sep=0.75pt]    {$\varphi $};
\draw (200,140) node [inner sep=0.75pt]   [align=left] {2};
\draw (225, 140) node [anchor=west]{, \quad\ \ and};
\draw (660, 144) node [anchor=west]{,};
\end{tikzpicture}
\end{equation}

where the box labeled by $2$ corresponds to the second Jones-Wenzl idempotent in the Temperley-Lieb algebra, which is equal to the following expression:

\begin{equation*}
\begin{tikzpicture}[
  line width=0.8pt,
  baseline={([yshift=-0.5ex]current bounding box.center)}
]
  \draw (0,0) -- (0,0.3)
        (0.5,0) -- (0.5,0.3);
  \draw (0,0.7) -- (0,1)
        (0.5,0.7) -- (0.5,1);
  \draw (-0.15,0.3) rectangle (0.65,0.7);
  \node at (0.25,0.5) {$2$};
\end{tikzpicture}
\;=\;
\begin{tikzpicture}[
  line width=0.8pt,
  baseline={([yshift=-0.5ex]current bounding box.center)}
]
  \draw (0,0) -- (0,1);
  \draw (0.5,0) -- (0.5,1);
\end{tikzpicture}
\;+\;\frac{1}{q+q^{-1}}\;
\begin{tikzpicture}[
  line width=0.8pt,
  baseline={([yshift=-0.5ex]current bounding box.center)}
]
  \draw (0,1) .. controls (0,0.5) and (0.5,0.5) .. (0.5,1);
  \draw (0,0) .. controls (0,0.5) and (0.5,0.5) .. (0.5,0);
\end{tikzpicture}.
\end{equation*}

The methods of \cite{Chb12} show that $\varphi$ is injective in the cases that $M$ is a thickened annulus or thickened 3-punctured sphere. However, $\varphi$ is not always injective. For example, in \cite{ESS_2025} it was shown $\varphi$ is not injective in the case of the thickened closed torus.

\section{Tangles and Webs on the Annulus}\label{sec:annulus}

This section will explore tangled webs on the annulus in preparation for constructing central elements in the skein module in the next section. We recall definitions from the Kauffman bracket setting.

\begin{definition}
An $(n,m)$ tangle on the annulus is defined to be a tangle on the annulus with $n$ marked points on the outer boundary and $m$ marked points on the inner boundary. The $\C$-vector space spanned by such tangles subject to the Kauffman bracket skein relations is denoted $A_{n,m}.$ 
\end{definition}

In general, $A_{n,m}$ is only a vector space over $\C$. In the special case that $n = m$, we can endow it with a multiplicative structure. Two diagrams $D_1,D_2 \in A_{n,n}$ are multiplied by placing $D_1$ around the outside of $D_2$ and concatenating.

\begin{notation}
We will be using the following notation for elements of $A_{n,n}$. The identity element of $A_{n,n}$ will be denoted by $e$ and corresponds to the diagram consisting of $n$ parallel straight strands connecting the two marked points. The elements $u, u^{-1} \in A_{n,n}$ are defined to be a single twist of all strands around the annular hole. Note that these elements are in the center of $A_{n,n}$.
\begin{figure}[h!]
    \centering
    \resizebox{9cm}{!}{

\begin{tikzpicture}[x=0.75pt,y=0.75pt,yscale=-1,xscale=1]

\draw   (410.53,121.02) .. controls (410.53,72.13) and (370.9,32.5) .. (322.02,32.5) .. controls (273.13,32.5) and (233.5,72.13) .. (233.5,121.02) .. controls (233.5,169.9) and (273.13,209.53) .. (322.02,209.53) .. controls (370.9,209.53) and (410.53,169.9) .. (410.53,121.02) -- cycle ;
\draw   (345.12,121.02) .. controls (345.12,108.25) and (334.78,97.91) .. (322.02,97.91) .. controls (309.25,97.91) and (298.91,108.25) .. (298.91,121.02) .. controls (298.91,133.78) and (309.25,144.12) .. (322.02,144.12) .. controls (334.78,144.12) and (345.12,133.78) .. (345.12,121.02) -- cycle ;
\draw [line width=2.25]    (322.02,97.91) .. controls (322.27,67.75) and (300.74,73.11) .. (291.88,79.68) ;
\draw [line width=2.25]    (322.02,32.5) .. controls (322.13,48.56) and (320.03,67.68) .. (341.73,73.99) ;
\draw  [draw opacity=0][line width=2.25]  (294.39,78.02) .. controls (279.67,86.98) and (269.89,102.89) .. (269.89,121.02) .. controls (269.89,149.02) and (293.23,171.72) .. (322.02,171.72) .. controls (350.81,171.72) and (374.15,149.02) .. (374.15,121.02) .. controls (374.15,99.43) and (360.27,80.99) .. (340.72,73.68) -- (322.02,121.02) -- cycle ; \draw  [line width=2.25]  (294.39,78.02) .. controls (279.67,86.98) and (269.89,102.89) .. (269.89,121.02) .. controls (269.89,149.02) and (293.23,171.72) .. (322.02,171.72) .. controls (350.81,171.72) and (374.15,149.02) .. (374.15,121.02) .. controls (374.15,99.43) and (360.27,80.99) .. (340.72,73.68) ;  
\draw   (452.83,121.45) .. controls (452.83,72.57) and (492.46,32.94) .. (541.35,32.94) .. controls (590.24,32.94) and (629.87,72.57) .. (629.87,121.45) .. controls (629.87,170.34) and (590.24,209.97) .. (541.35,209.97) .. controls (492.46,209.97) and (452.83,170.34) .. (452.83,121.45) -- cycle ;
\draw   (518.24,121.45) .. controls (518.24,108.69) and (528.59,98.35) .. (541.35,98.35) .. controls (554.11,98.35) and (564.46,108.69) .. (564.46,121.45) .. controls (564.46,134.22) and (554.11,144.56) .. (541.35,144.56) .. controls (528.59,144.56) and (518.24,134.22) .. (518.24,121.45) -- cycle ;
\draw [line width=2.25]    (541.35,98.35) .. controls (541.09,68.19) and (562.63,73.55) .. (571.49,80.12) ;
\draw [line width=2.25]    (541.35,32.94) .. controls (541.23,49) and (543.33,68.12) .. (521.64,74.43) ;
\draw  [draw opacity=0][line width=2.25]  (568.98,78.45) .. controls (583.69,87.41) and (593.48,103.32) .. (593.48,121.45) .. controls (593.48,149.45) and (570.14,172.15) .. (541.35,172.15) .. controls (512.56,172.15) and (489.22,149.45) .. (489.22,121.45) .. controls (489.22,99.86) and (503.09,81.43) .. (522.65,74.12) -- (541.35,121.45) -- cycle ; \draw  [line width=2.25]  (568.98,78.45) .. controls (583.69,87.41) and (593.48,103.32) .. (593.48,121.45) .. controls (593.48,149.45) and (570.14,172.15) .. (541.35,172.15) .. controls (512.56,172.15) and (489.22,149.45) .. (489.22,121.45) .. controls (489.22,99.86) and (503.09,81.43) .. (522.65,74.12) ;  
\draw   (191.53,121.02) .. controls (191.53,72.13) and (151.9,32.5) .. (103.02,32.5) .. controls (54.13,32.5) and (14.5,72.13) .. (14.5,121.02) .. controls (14.5,169.9) and (54.13,209.53) .. (103.02,209.53) .. controls (151.9,209.53) and (191.53,169.9) .. (191.53,121.02) -- cycle ;
\draw   (126.12,121.02) .. controls (126.12,108.25) and (115.78,97.91) .. (103.02,97.91) .. controls (90.25,97.91) and (79.91,108.25) .. (79.91,121.02) .. controls (79.91,133.78) and (90.25,144.12) .. (103.02,144.12) .. controls (115.78,144.12) and (126.12,133.78) .. (126.12,121.02) -- cycle ;
\draw [line width=2.25]    (103.02,32.5) -- (103.02,97.91) ;
\draw  [draw opacity=0][fill={rgb, 255:red, 255; green, 255; blue, 255 }  ,fill opacity=1 ] (91.71,57.39) -- (112.57,57.39) -- (112.57,79.29) -- (91.71,79.29) -- cycle ;

\draw (310.33,220.35) node [anchor=north west][inner sep=0.75pt]  [font=\huge]  {$u$};
\draw  [color={rgb, 255:red, 255; green, 255; blue, 255 }  ,draw opacity=1 ][fill={rgb, 255:red, 255; green, 255; blue, 255 }  ,fill opacity=1 ]  (310.69,151.23) -- (331.69,151.23) -- (331.69,184.23) -- (310.69,184.23) -- cycle  ;
\draw (313.69,165.63) node [anchor=north west][inner sep=0.75pt]  [font=\Large]  {$n$};
\draw (525,210.12) node [anchor=north west][inner sep=0.75pt]  [font=\huge]  {$u^{-1}$};
\draw  [color={rgb, 255:red, 255; green, 255; blue, 255 }  ,draw opacity=1 ][fill={rgb, 255:red, 255; green, 255; blue, 255 }  ,fill opacity=1 ]  (530.82,151.27) -- (551.82,151.27) -- (551.82,184.27) -- (530.82,184.27) -- cycle  ;
\draw (533.82,165.67) node [anchor=north west][inner sep=0.75pt]  [font=\Large]  {$n$};
\draw (94.33,220.35) node [anchor=north west][inner sep=0.75pt]  [font=\huge]  {$e$};
\draw  [draw opacity=0][line width=0.75]   (92.73,48.92) -- (113.73,48.92) -- (113.73,81.92) -- (92.73,81.92) -- cycle  ;
\draw (95.73,63.32) node [anchor=north west][inner sep=0.75pt]  [font=\Large]  {$n$};

\end{tikzpicture}

}
\end{figure}\noindent
As the notation suggests, $u u^{-1} = e$. 

We also define $u^{1/n}, u^{-1/n} \in A_{n,n}$ as pictured below (where $m = n-1$). Observe that $(u^{1/n})^n = u$, and similarly for $u^{-1/n}$.

\begin{figure}[h!]
    \centering
    \resizebox{6cm}{!}{

\begin{tikzpicture}[x=0.75pt,y=0.75pt,yscale=-1,xscale=1]

\draw   (230.53,121.02) .. controls (230.53,72.13) and (190.9,32.5) .. (142.02,32.5) .. controls (93.13,32.5) and (53.5,72.13) .. (53.5,121.02) .. controls (53.5,169.9) and (93.13,209.53) .. (142.02,209.53) .. controls (190.9,209.53) and (230.53,169.9) .. (230.53,121.02) -- cycle ;
\draw   (165.12,121.02) .. controls (165.12,108.25) and (154.78,97.91) .. (142.02,97.91) .. controls (129.25,97.91) and (118.91,108.25) .. (118.91,121.02) .. controls (118.91,133.78) and (129.25,144.12) .. (142.02,144.12) .. controls (154.78,144.12) and (165.12,133.78) .. (165.12,121.02) -- cycle ;
\draw [line width=1.5]    (137.42,98.44) .. controls (134.09,69.55) and (120.74,73.11) .. (111.88,79.68) ;
\draw [line width=1.5]    (144.98,33.1) .. controls (145.09,49.17) and (143.87,67.55) .. (161.73,73.99) ;
\draw  [draw opacity=0][line width=1.5]  (114.39,78.02) .. controls (99.67,86.98) and (89.89,102.89) .. (89.89,121.02) .. controls (89.89,149.02) and (113.23,171.72) .. (142.02,171.72) .. controls (170.81,171.72) and (194.15,149.02) .. (194.15,121.02) .. controls (194.15,99.43) and (180.27,80.99) .. (160.72,73.68) -- (142.02,121.02) -- cycle ; \draw  [line width=1.5]  (114.39,78.02) .. controls (99.67,86.98) and (89.89,102.89) .. (89.89,121.02) .. controls (89.89,149.02) and (113.23,171.72) .. (142.02,171.72) .. controls (170.81,171.72) and (194.15,149.02) .. (194.15,121.02) .. controls (194.15,99.43) and (180.27,80.99) .. (160.72,73.68) ;  
\draw [line width=2.25]    (146.09,97.99) .. controls (147.64,83.1) and (138.29,72.54) .. (135.87,62.21) .. controls (133.45,51.89) and (134.31,38.88) .. (134.31,33.1) ;
\draw  [draw opacity=0][fill={rgb, 255:red, 255; green, 255; blue, 255 }  ,fill opacity=1 ] (126.69,49.5) -- (142.4,49.5) -- (142.4,68.82) -- (126.69,68.82) -- cycle ;
\draw   (273.5,121.62) .. controls (273.5,72.73) and (313.13,33.1) .. (362.02,33.1) .. controls (410.9,33.1) and (450.53,72.73) .. (450.53,121.62) .. controls (450.53,170.51) and (410.9,210.14) .. (362.02,210.14) .. controls (313.13,210.14) and (273.5,170.51) .. (273.5,121.62) -- cycle ;
\draw   (338.91,121.62) .. controls (338.91,108.86) and (349.25,98.51) .. (362.02,98.51) .. controls (374.78,98.51) and (385.12,108.86) .. (385.12,121.62) .. controls (385.12,134.38) and (374.78,144.73) .. (362.02,144.73) .. controls (349.25,144.73) and (338.91,134.38) .. (338.91,121.62) -- cycle ;
\draw [line width=1.5]    (366.61,99.04) .. controls (369.94,70.15) and (383.3,73.72) .. (392.15,80.29) ;
\draw [line width=1.5]    (359.06,33.71) .. controls (358.94,49.77) and (360.17,68.15) .. (342.31,74.6) ;
\draw  [draw opacity=0][line width=1.5]  (389.64,78.62) .. controls (404.36,87.58) and (414.15,103.49) .. (414.15,121.62) .. controls (414.15,149.62) and (390.81,172.32) .. (362.02,172.32) .. controls (333.23,172.32) and (309.89,149.62) .. (309.89,121.62) .. controls (309.89,100.03) and (323.76,81.59) .. (343.31,74.28) -- (362.02,121.62) -- cycle ; \draw  [line width=1.5]  (389.64,78.62) .. controls (404.36,87.58) and (414.15,103.49) .. (414.15,121.62) .. controls (414.15,149.62) and (390.81,172.32) .. (362.02,172.32) .. controls (333.23,172.32) and (309.89,149.62) .. (309.89,121.62) .. controls (309.89,100.03) and (323.76,81.59) .. (343.31,74.28) ;  
\draw [line width=2.25]    (357.94,98.6) .. controls (356.39,83.71) and (365.75,73.14) .. (368.17,62.82) .. controls (370.59,52.49) and (369.72,39.48) .. (369.72,33.71) ;
\draw  [draw opacity=0][fill={rgb, 255:red, 255; green, 255; blue, 255 }  ,fill opacity=1 ] (362.19,49.61) -- (377.9,49.61) -- (377.9,68.93) -- (362.19,68.93) -- cycle ;

\draw (126.83,214.85) node [anchor=north west][inner sep=0.75pt]  [font=\huge]  {$u^{1/n}$};
\draw  [color={rgb, 255:red, 255; green, 255; blue, 255 }  ,draw opacity=1 ][fill={rgb, 255:red, 255; green, 255; blue, 255 }  ,fill opacity=1 ]  (142.77, 169.68) circle [x radius= 11.31, y radius= 20.51]   ;
\draw (135.77,160.58) node [anchor=north west][inner sep=0.75pt]  [font=\Large]  {$1$};
\draw  [draw opacity=0]  (116.25,39.78) -- (149.25,39.78) -- (149.25,72.78) -- (116.25,72.78) -- cycle  ;
\draw (115.25,54.18) node [anchor=north west][inner sep=0.75pt]  [font=\Large]  {$\ m$};
\draw (342.83,215.45) node [anchor=north west][inner sep=0.75pt]  [font=\huge]  {$u^{-1/n}$};
\draw  [color={rgb, 255:red, 255; green, 255; blue, 255 }  ,draw opacity=1 ][fill={rgb, 255:red, 255; green, 255; blue, 255 }  ,fill opacity=1 ]  (362.77, 170.29) circle [x radius= 11.31, y radius= 20.51]   ;
\draw (356.77,161.19) node [anchor=north west][inner sep=0.75pt]  [font=\Large]  {$1$};
\draw  [draw opacity=0]  (358.11,40.38) -- (385.11,40.38) -- (385.11,73.38) -- (358.11,73.38) -- cycle  ;
\draw (361.11,54.78) node [anchor=north west][inner sep=0.75pt]  [font=\Large]  {$m$};

\end{tikzpicture}

}
\end{figure}

Lastly, we will denote the element corresponding to the core loop of the annulus by $z.$ The element $z^n \in A_{0,0}$ then corresponds to $n$ copies of the core loop.

\begin{figure}[H]
    \centering
    \resizebox{3cm}{!}{

\begin{tikzpicture}[x=0.75pt,y=0.75pt,yscale=-1,xscale=1]

\draw   (41.5,117.02) .. controls (41.5,68.13) and (81.13,28.5) .. (130.02,28.5) .. controls (178.9,28.5) and (218.53,68.13) .. (218.53,117.02) .. controls (218.53,165.9) and (178.9,205.53) .. (130.02,205.53) .. controls (81.13,205.53) and (41.5,165.9) .. (41.5,117.02) -- cycle ;
\draw   (106.91,117.02) .. controls (106.91,104.25) and (117.25,93.91) .. (130.02,93.91) .. controls (142.78,93.91) and (153.12,104.25) .. (153.12,117.02) .. controls (153.12,129.78) and (142.78,140.12) .. (130.02,140.12) .. controls (117.25,140.12) and (106.91,129.78) .. (106.91,117.02) -- cycle ;
\draw  [line width=2.25]  (66.87,117.02) .. controls (66.87,82.14) and (95.14,53.87) .. (130.02,53.87) .. controls (164.89,53.87) and (193.16,82.14) .. (193.16,117.02) .. controls (193.16,151.89) and (164.89,180.16) .. (130.02,180.16) .. controls (95.14,180.16) and (66.87,151.89) .. (66.87,117.02) -- cycle ;
\draw (117,216.35) node [anchor=north west][inner sep=0.75pt]  [font=\huge]  {$z^{n}$};
\draw  [color={rgb, 255:red, 255; green, 255; blue, 255 }  ,draw opacity=1 ][fill={rgb, 255:red, 255; green, 255; blue, 255 }  ,fill opacity=1 ]  (119.69,159.23) -- (140.69,159.23) -- (140.69,192.23) -- (119.69,192.23) -- cycle  ;
\draw (122.69,173.63) node [anchor=north west][inner sep=0.75pt]  [font=\Large]  {$n$};

\end{tikzpicture}

}
    \caption{A diagram with $n$ nested core loops.}
\end{figure}
\end{notation}

We begin by recalling the following results:
\begin{proposition}[\cite{Green_1998}]\label{A_n_n}
Let $\beta_n$ be the standard basis of crossingless matchings for $TL_n$ for $n \geq 1$. Then $A_{n,n}$ is spanned by \begin{align*}
&\left\{u^{k/n}bu^{j/n}:
b\in\beta_n,\ k\in\mathbb Z,\ 0\le j<n\right\}\\
{}\cup{}&
\left\{u^{k/n}b[z^r]u^{j/n}:
b\in\beta_n^0,\ r\ge1,\ k\in\mathbb Z,\ 0\le j<n\right\}.
\end{align*}

Here $\beta_n^0$ consists of the matchings with no through strands, and $b[z^r]$ denotes the insertion of $r$ disjoint core loops into $b$.
\end{proposition}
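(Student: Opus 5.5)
Fix $n\ge1$. The plan is to reduce an arbitrary $(n,n)$ tangle diagram on the annulus to crossingless diagrams, and then to sort the crossingless diagrams by how their arcs wind around the hole.

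Using the Kauffman bracket crossing relation, every element of $A_{n,n}$ is a combination of crossingless diagrams. Removing trivial loops with the loop relation, we may assume each diagram $D$ is an embedded collection of arcs and essential loops (parallel copies of the core $z$) in the annulus, with $n$ endpoints on each boundary circle. Let $s$ be the number of through strands of $D$, i.e. arcs joining the outer boundary to the inner boundary.

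\textbf{Case $s\ge1$.} Any essential loop would separate the two boundary components, so it would meet every through strand. Hence $D$ has no core loops. I would then cut the annulus along a radial arc $\gamma$ from the inner to the outer boundary, chosen in the region just before the first marked point. Among all such diagrams isotopic to $D$, choose one minimizing intersections with $\gamma$. Each through strand winds some fixed number of times around the hole. Because the strands are disjoint, these winding numbers, and the way the strands are cyclically offset relative to the marked points, are governed by one global rotation.

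Concretely, I would show that $D$ can be written as $u^{k/n}\,b\,u^{j/n}$, where $b\in\beta_n$ is a planar matching read off after cutting. The rotation by $1/n$ of a turn is $u^{1/n}$, and it shifts the marked points cyclically. The inner factor $u^{j/n}$ with $0\le j<n$ records the cyclic offset modulo $n$. All full twists, together with the remaining fractional part, are collected into the outer factor $u^{k/n}$ with $k\in\mathbb Z$. Here I would use that $u$ is central and that $(u^{1/n})^n=u$.

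Cup and cap arcs, which return to the same boundary, can be isotoped into a neighborhood of that boundary. There they appear as cups or caps of $b$ conjugated by a rotation. Since rotations compose additively, the whole diagram is a rotation, then a planar matching, then a rotation.

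\textbf{Case $s=0$.} The diagram consists of caps on the outer boundary, cups on the inner boundary, and $r\ge0$ parallel core loops separating them. The same argument, applied separately to each boundary, gives rotations $u^{k/n}$ and $u^{j/n}$ around $b[z^r]$ with $b\in\beta_n^0$. When $r=0$ this diagram already lies in the first family.

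\textbf{Main obstacle.} The main obstacle is the bookkeeping in the case $s\ge1$. One must check that an arbitrary disjoint configuration, in which different strands may wind by different amounts, is isotopic to rotation $\circ$ matching $\circ$ rotation. I expect to handle this by induction on the number of intersections with $\gamma$: an outermost intersection of a cup or cap can be pushed across the boundary marked points, and doing so only changes the rotation factors.
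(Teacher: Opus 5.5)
The paper does not prove this proposition; it quotes it from \cite{Green_1998}, so I am judging your argument on its own terms. Several steps are fine: the reduction to crossingless diagrams without trivial loops, the observation that through strands rule out core loops, and the normalization $0\le j<n$ using centrality of $u=(u^{1/n})^n$.

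\textbf{The gap.} The step that carries the whole proposition is not proved: that a single pair of rotations puts every arc of $D$ into planar position at once. With the cut $\gamma$ fixed in advance, the sentence that the winding numbers ``are governed by one global rotation'' just restates the claim. The remark that ``each cap is a cap of $b$ conjugated by a rotation, and rotations compose additively'' does not imply it either. Different caps and cups could a priori need different rotations, and these must also be compatible with the through strands.

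The induction you sketch does not close this. ``Pushing an intersection across the marked points'' is not an isotopy. It is a rewriting $D=u^{\pm m/n}D'$, which moves every endpoint on that boundary. You have not said which $m$ to use, or why $|D'\cap\gamma|<|D\cap\gamma|$.

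One position at a time can make things worse. For $n=6$, take outer caps $(1,2)$ and $(5,6)$, and a cap joining $3$ to $4$ the long way round, enclosing both. This pattern meets $\gamma$ once. Rotating by one position in either direction gives a pattern meeting $\gamma$ twice; you must rotate by three positions to clear $\gamma$.

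Your induction also only handles intersections lying on cups and caps. The outermost intersection can lie on a through strand, as already happens for $D=u^{1/n}$.

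\textbf{The missing idea.} Choose the cut adapted to $D$, rather than minimizing against a fixed one.
\begin{enumerate}
\item If $s\ge1$, let $\rho$ be a parallel push-off of one through strand. It runs from a gap between adjacent outer marked points to a gap between adjacent inner ones, and it is disjoint from $D$.
\item If $s=0$, consider the closed boundary intervals that cobound disks with the outermost caps. They are pairwise disjoint, so they cannot cover the circle, and some complementary interval lies between two adjacent outer marked points. Find such a gap on the inner boundary as well. Join the two gaps by an arc $\rho$ in the complement of the cap and cup disks, meeting each core loop once.
\end{enumerate}

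Next take a diffeomorphism $\phi$ of the annulus with $\phi(\rho)=\rho_0$, the standard cut, that rotates each boundary circle by a multiple of $2\pi/n$. It exists because cutting along either arc yields a disk. Then $\phi(D)$ meets $\rho_0$ only in core loops. Hence $\phi(D)=b$ with $b\in\beta_n$, or $\phi(D)=b[z^r]$ with $b\in\beta_n^0$.

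The mapping class group of the annulus rel boundary is generated by the Dehn twist, which acts on diagrams as multiplication by the central element $u$. Boundary-collar rotations act by stacking powers of $u^{1/n}$. So $\phi^{-1}$ acts as $X\mapsto u^{m}u^{a/n}Xu^{c/n}$ for some integers $m,a,c$. Write $c=nm'+j$ with $0\le j<n$ and absorb $u^{m+m'}$ into the left factor; this gives the required form. This handles all of the winding bookkeeping that you identified as the main obstacle.
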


\begin{proposition}[\cite{PS_2000,SW07}]\label{linear independence}
A basis of $A_{k,n}$ is given by isotopy classes of crossingless diagrams in $A_{k,n}$ with no trivial loops.
\end{proposition}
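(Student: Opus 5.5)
This is a cited result (Przytycki--Sikora, Sikora--Westbury), so I would reconstruct the standard argument. It has two halves: spanning and linear independence.

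\emph{Spanning.} Take any tangle diagram in the annulus with $k$ outer and $n$ inner marked points. Apply the crossing relation (\ref{KB crossing relation}) at every crossing. This writes the diagram as a linear combination of crossingless diagrams. Each trivial (contractible) loop is then removed using (\ref{KB loop relation}), at the cost of a scalar $-(q+q^{-1})$. What remains are crossingless diagrams with no trivial loops. Isotopy in the annulus identifies such diagrams exactly when they are isotopic rel boundary, so the set of isotopy classes spans $A_{k,n}$.

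\emph{Linear independence.} I would use the diamond lemma (Bergman's) applied to the rewriting system on diagrams:
\begin{itemize}
\item resolve a crossing via (\ref{KB crossing relation});
\item delete a trivial loop via (\ref{KB loop relation}).
\end{itemize}
The system terminates because the number of crossings plus trivial loops strictly decreases. It then suffices to check that the ambiguities resolve, namely:
\begin{itemize}
\item two crossings resolved in either order;
\item a crossing and a trivial loop;
\item the moves compatible with the Reidemeister II and III moves and the framed Reidemeister I move.
\end{itemize}
Since the Kauffman bracket is a well-defined invariant of framed links in $F\times I$, these local confluence checks are the usual computations: RII and RIII invariance of the state sum, and the framing factor for the twist. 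Once confluence holds, every diagram has a unique normal form in the span of reduced (crossingless, no trivial loop) diagrams. This normal form is invariant under the isotopy relations. It therefore defines a linear map from $A_{k,n}$ to the free vector space on reduced isotopy classes, and this map is the identity on reduced diagrams. Hence those diagrams are independent.

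An alternative route to independence is the state-sum argument. Define $\langle D\rangle$ as a sum over Kauffman states, taking values in the free module on isotopy classes of crossingless diagrams with no trivial loops. One then checks invariance under the framed Reidemeister moves performed in the annulus. This is the classical Kauffman bracket computation, and it is local, so it works on any surface.

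The main obstacle is making sure the normal form depends only on the isotopy class of the diagram in the annulus rel boundary. In particular, two crossingless diagrams that are isotopic in the annulus must be literally the same normal form, and non-isotopic ones must be distinct basis elements. I would handle this by working with diagrams up to planar isotopy from the outset, so that only Reidemeister moves need checking. The invariance check is local and identical to the disk case. The global topology of the annulus enters only in which loops count as trivial: a loop is trivial if it bounds a disk, and the core loops are not trivial. Since the reduction never creates or merges non-trivial curves except through local moves, this causes no issue.
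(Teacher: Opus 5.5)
Your argument is correct and is essentially the standard one from the works the paper cites; the paper itself gives no proof. Spanning is by resolving crossings and removing trivial loops, and independence is by a state sum (or confluence, as in Sikora--Westbury) valued in the free module on rel-boundary isotopy classes of reduced diagrams, checked invariant under the framed Reidemeister moves. One small slip: crossings plus trivial loops need not strictly decrease (smoothing the crossing of a one-crossing curl can yield two trivial loops), so for termination use the lexicographic order on (number of crossings, number of components) instead.
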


Using Proposition \ref{linear independence} we may construct an explicit list of basis elements for $A_{n,n}$ by taking one element in each isotopy class from the explicit spanning set provided by Proposition \ref{A_n_n}. The two most important bases we will be using are for $A_{1,1}$ and $A_{2,2}$.

By applying the above strategy, a basis for $A_{1,1}$ is given by:
\begin{equation}\label{eq:A_1_1}
\{u^k: k \in \Z\}.
\end{equation}
A basis for $A_{2,2}$ is given by:
\begin{equation}\label{eq:A_2_2}
\{u^{k/2}, f_1z^j, f_2z^j, f_3z^j, f_4z^j: k \in \Z, j \in \N_0\}.
\end{equation}
Here $f_1, f_2, f_3$, and $f_4$ are defined as in Figure \ref{fig:A_2_2}.

\begin{figure}[h!]
\centering
\resizebox{11cm}{!}{

\begin{tikzpicture}[x=0.75pt,y=0.75pt,yscale=-1,xscale=1]

\draw   (541.5,122.48) .. controls (541.5,85.62) and (571.38,55.75) .. (608.23,55.75) .. controls (645.09,55.75) and (674.96,85.62) .. (674.96,122.48) .. controls (674.96,159.33) and (645.09,189.21) .. (608.23,189.21) .. controls (571.38,189.21) and (541.5,159.33) .. (541.5,122.48) -- cycle ;
\draw   (590.81,122.48) .. controls (590.81,112.86) and (598.61,105.06) .. (608.23,105.06) .. controls (617.85,105.06) and (625.65,112.86) .. (625.65,122.48) .. controls (625.65,132.1) and (617.85,139.9) .. (608.23,139.9) .. controls (598.61,139.9) and (590.81,132.1) .. (590.81,122.48) -- cycle ;
\draw  [draw opacity=0][line width=2.25]  (599.95,59.47) .. controls (600.32,63.72) and (603.89,67.06) .. (608.23,67.06) .. controls (612.58,67.06) and (616.14,63.72) .. (616.51,59.47) -- (608.23,58.75) -- cycle ; \draw  [line width=2.25]  (599.95,59.47) .. controls (600.32,63.72) and (603.89,67.06) .. (608.23,67.06) .. controls (612.58,67.06) and (616.14,63.72) .. (616.51,59.47) ;  
\draw   (45.54,122.48) .. controls (45.54,85.62) and (75.41,55.75) .. (112.27,55.75) .. controls (149.12,55.75) and (179,85.62) .. (179,122.48) .. controls (179,159.33) and (149.12,189.21) .. (112.27,189.21) .. controls (75.41,189.21) and (45.54,159.33) .. (45.54,122.48) -- cycle ;
\draw   (94.85,122.48) .. controls (94.85,112.86) and (102.65,105.06) .. (112.27,105.06) .. controls (121.89,105.06) and (129.69,112.86) .. (129.69,122.48) .. controls (129.69,132.1) and (121.89,139.9) .. (112.27,139.9) .. controls (102.65,139.9) and (94.85,132.1) .. (94.85,122.48) -- cycle ;
\draw  [draw opacity=0][line width=2.25]  (129.58,74.9) .. controls (148.43,82.09) and (161.85,100.68) .. (161.85,122.48) .. controls (161.85,150.51) and (139.65,173.24) .. (112.27,173.24) .. controls (84.88,173.24) and (62.69,150.51) .. (62.69,122.48) .. controls (62.69,100.68) and (76.11,82.09) .. (94.95,74.9) -- (112.27,122.48) -- cycle ; \draw  [line width=2.25]  (129.58,74.9) .. controls (148.43,82.09) and (161.85,100.68) .. (161.85,122.48) .. controls (161.85,150.51) and (139.65,173.24) .. (112.27,173.24) .. controls (84.88,173.24) and (62.69,150.51) .. (62.69,122.48) .. controls (62.69,100.68) and (76.11,82.09) .. (94.95,74.9) ;  
\draw [line width=2.25]    (93.64,75.51) .. controls (97.41,73.94) and (106.16,68.39) .. (104,56.57) ;
\draw [line width=2.25]    (130.66,75.36) .. controls (126.89,73.79) and (118.14,68.24) .. (120.3,56.42) ;
\draw  [draw opacity=0][line width=2.25]  (600.83,106.84) .. controls (600.25,105.71) and (599.92,104.42) .. (599.92,103.06) .. controls (599.92,98.47) and (603.64,94.75) .. (608.23,94.75) .. controls (612.82,94.75) and (616.54,98.47) .. (616.54,103.06) .. controls (616.54,104.27) and (616.28,105.42) .. (615.82,106.46) -- (608.23,103.06) -- cycle ; \draw  [line width=2.25]  (600.83,106.84) .. controls (600.25,105.71) and (599.92,104.42) .. (599.92,103.06) .. controls (599.92,98.47) and (603.64,94.75) .. (608.23,94.75) .. controls (612.82,94.75) and (616.54,98.47) .. (616.54,103.06) .. controls (616.54,104.27) and (616.28,105.42) .. (615.82,106.46) ;  
\draw [line width=2.25]    (600,60.16) -- (599.71,55.88) ;
\draw [line width=2.25]    (616.46,60) -- (616.96,56.3) ;
\draw   (365.54,122.48) .. controls (365.54,85.62) and (395.41,55.75) .. (432.27,55.75) .. controls (469.12,55.75) and (499,85.62) .. (499,122.48) .. controls (499,159.33) and (469.12,189.21) .. (432.27,189.21) .. controls (395.41,189.21) and (365.54,159.33) .. (365.54,122.48) -- cycle ;
\draw   (414.85,122.48) .. controls (414.85,112.86) and (422.65,105.06) .. (432.27,105.06) .. controls (441.89,105.06) and (449.69,112.86) .. (449.69,122.48) .. controls (449.69,132.1) and (441.89,139.9) .. (432.27,139.9) .. controls (422.65,139.9) and (414.85,132.1) .. (414.85,122.48) -- cycle ;
\draw  [draw opacity=0][line width=2.25]  (424.87,106.84) .. controls (424.29,105.71) and (423.96,104.42) .. (423.96,103.06) .. controls (423.96,98.47) and (427.68,94.75) .. (432.27,94.75) .. controls (436.86,94.75) and (440.58,98.47) .. (440.58,103.06) .. controls (440.58,104.27) and (440.32,105.42) .. (439.85,106.46) -- (432.27,103.06) -- cycle ; \draw  [line width=2.25]  (424.87,106.84) .. controls (424.29,105.71) and (423.96,104.42) .. (423.96,103.06) .. controls (423.96,98.47) and (427.68,94.75) .. (432.27,94.75) .. controls (436.86,94.75) and (440.58,98.47) .. (440.58,103.06) .. controls (440.58,104.27) and (440.32,105.42) .. (439.85,106.46) ;  
\draw  [draw opacity=0][line width=2.25]  (449.58,74.9) .. controls (468.43,82.09) and (481.85,100.68) .. (481.85,122.48) .. controls (481.85,150.51) and (459.65,173.24) .. (432.27,173.24) .. controls (404.88,173.24) and (382.69,150.51) .. (382.69,122.48) .. controls (382.69,100.68) and (396.11,82.09) .. (414.95,74.9) -- (432.27,122.48) -- cycle ; \draw  [line width=2.25]  (449.58,74.9) .. controls (468.43,82.09) and (481.85,100.68) .. (481.85,122.48) .. controls (481.85,150.51) and (459.65,173.24) .. (432.27,173.24) .. controls (404.88,173.24) and (382.69,150.51) .. (382.69,122.48) .. controls (382.69,100.68) and (396.11,82.09) .. (414.95,74.9) ;  
\draw [line width=2.25]    (413.64,75.51) .. controls (417.41,73.94) and (426.16,68.39) .. (424,56.57) ;
\draw [line width=2.25]    (450.66,75.36) .. controls (446.89,73.79) and (438.14,68.24) .. (440.3,56.42) ;
\draw   (206.54,122.48) .. controls (206.54,85.62) and (236.41,55.75) .. (273.27,55.75) .. controls (310.12,55.75) and (340,85.62) .. (340,122.48) .. controls (340,159.33) and (310.12,189.21) .. (273.27,189.21) .. controls (236.41,189.21) and (206.54,159.33) .. (206.54,122.48) -- cycle ;
\draw   (255.85,122.48) .. controls (255.85,112.86) and (263.65,105.06) .. (273.27,105.06) .. controls (282.89,105.06) and (290.69,112.86) .. (290.69,122.48) .. controls (290.69,132.1) and (282.89,139.9) .. (273.27,139.9) .. controls (263.65,139.9) and (255.85,132.1) .. (255.85,122.48) -- cycle ;
\draw  [draw opacity=0][line width=2.25]  (264.99,59.47) .. controls (265.36,63.72) and (268.92,67.06) .. (273.27,67.06) .. controls (277.61,67.06) and (281.18,63.72) .. (281.55,59.47) -- (273.27,58.75) -- cycle ; \draw  [line width=2.25]  (264.99,59.47) .. controls (265.36,63.72) and (268.92,67.06) .. (273.27,67.06) .. controls (277.61,67.06) and (281.18,63.72) .. (281.55,59.47) ;  
\draw [line width=2.25]    (265.04,60.16) -- (264.75,55.88) ;
\draw [line width=2.25]    (281.5,60) -- (282,56.3) ;
\draw  [draw opacity=0][line width=2.25]  (311.66,108.51) .. controls (313.17,112.87) and (314,117.58) .. (314,122.48) .. controls (314,145.51) and (295.76,164.18) .. (273.27,164.18) .. controls (250.77,164.18) and (232.54,145.51) .. (232.54,122.48) .. controls (232.54,117.58) and (233.36,112.87) .. (234.88,108.51) -- (273.27,122.48) -- cycle ; \draw  [line width=2.25]  (311.66,108.51) .. controls (313.17,112.87) and (314,117.58) .. (314,122.48) .. controls (314,145.51) and (295.76,164.18) .. (273.27,164.18) .. controls (250.77,164.18) and (232.54,145.51) .. (232.54,122.48) .. controls (232.54,117.58) and (233.36,112.87) .. (234.88,108.51) ;  
\draw [line width=2.25]    (311.87,109.08) .. controls (305.24,88.72) and (284.19,93.6) .. (279.39,105.8) ;
\draw [line width=2.25]    (234.65,109.08) .. controls (241.24,89.45) and (262.35,93.6) .. (267.15,105.8) ;
\draw  [draw opacity=0][line width=2.25]  (147.13,108.15) .. controls (148.46,112.01) and (149.19,116.16) .. (149.19,120.48) .. controls (149.19,140.79) and (133.11,157.26) .. (113.27,157.26) .. controls (93.43,157.26) and (77.34,140.79) .. (77.34,120.48) .. controls (77.34,116.16) and (78.07,112.01) .. (79.41,108.15) -- (113.27,120.48) -- cycle ; \draw  [line width=2.25]  (147.13,108.15) .. controls (148.46,112.01) and (149.19,116.16) .. (149.19,120.48) .. controls (149.19,140.79) and (133.11,157.26) .. (113.27,157.26) .. controls (93.43,157.26) and (77.34,140.79) .. (77.34,120.48) .. controls (77.34,116.16) and (78.07,112.01) .. (79.41,108.15) ;  
\draw [line width=2.25]    (147.32,108.66) .. controls (141.46,90.7) and (123,95.3) .. (118.77,106.06) ;
\draw [line width=2.25]    (79.21,108.66) .. controls (85.02,91.35) and (102.37,95.3) .. (106.6,106.06) ;

\draw (590.71,191.51) node [anchor=north west][inner sep=0.75pt]  [font=\LARGE]  {$f_{4}$};
\draw (94.75,191.51) node [anchor=north west][inner sep=0.75pt]  [font=\LARGE]  {$f_{1}$};
\draw (414.75,191.51) node [anchor=north west][inner sep=0.75pt]  [font=\LARGE]  {$f_{3}$};
\draw (255.75,191.51) node [anchor=north west][inner sep=0.75pt]  [font=\LARGE]  {$f_{2}$};

\end{tikzpicture}

}
\caption{Basis elements in $A_{2,2}$}
\label{fig:A_2_2}
\end{figure}

We also define $v, u' \in A_{2,0}$ as in the figure below:
\begin{figure}[h!]
\centering
\resizebox{5.5cm}{!}{

\begin{tikzpicture}[x=0.75pt,y=0.75pt,yscale=-1,xscale=1]

\draw   (541.5,122.48) .. controls (541.5,85.62) and (571.38,55.75) .. (608.23,55.75) .. controls (645.09,55.75) and (674.96,85.62) .. (674.96,122.48) .. controls (674.96,159.33) and (645.09,189.21) .. (608.23,189.21) .. controls (571.38,189.21) and (541.5,159.33) .. (541.5,122.48) -- cycle ;
\draw   (590.81,122.48) .. controls (590.81,112.86) and (598.61,105.06) .. (608.23,105.06) .. controls (617.85,105.06) and (625.65,112.86) .. (625.65,122.48) .. controls (625.65,132.1) and (617.85,139.9) .. (608.23,139.9) .. controls (598.61,139.9) and (590.81,132.1) .. (590.81,122.48) -- cycle ;
\draw  [draw opacity=0][line width=2.25]  (599.95,59.47) .. controls (600.32,63.72) and (603.89,67.06) .. (608.23,67.06) .. controls (612.58,67.06) and (616.14,63.72) .. (616.51,59.47) -- (608.23,58.75) -- cycle ; \draw  [line width=2.25]  (599.95,59.47) .. controls (600.32,63.72) and (603.89,67.06) .. (608.23,67.06) .. controls (612.58,67.06) and (616.14,63.72) .. (616.51,59.47) ;  

\draw   (365.54,122.48) .. controls (365.54,85.62) and (395.41,55.75) .. (432.27,55.75) .. controls (469.12,55.75) and (499,85.62) .. (499,122.48) .. controls (499,159.33) and (469.12,189.21) .. (432.27,189.21) .. controls (395.41,189.21) and (365.54,159.33) .. (365.54,122.48) -- cycle ;
\draw   (414.85,122.48) .. controls (414.85,112.86) and (422.65,105.06) .. (432.27,105.06) .. controls (441.89,105.06) and (449.69,112.86) .. (449.69,122.48) .. controls (449.69,132.1) and (441.89,139.9) .. (432.27,139.9) .. controls (422.65,139.9) and (414.85,132.1) .. (414.85,122.48) -- cycle ;
\draw  [draw opacity=0][line width=2.25]  (449.58,74.9) .. controls (468.43,82.09) and (481.85,100.68) .. (481.85,122.48) .. controls (481.85,150.51) and (459.65,173.24) .. (432.27,173.24) .. controls (404.88,173.24) and (382.69,150.51) .. (382.69,122.48) .. controls (382.69,100.68) and (396.11,82.09) .. (414.95,74.9) -- (432.27,122.48) -- cycle ; \draw  [line width=2.25]  (449.58,74.9) .. controls (468.43,82.09) and (481.85,100.68) .. (481.85,122.48) .. controls (481.85,150.51) and (459.65,173.24) .. (432.27,173.24) .. controls (404.88,173.24) and (382.69,150.51) .. (382.69,122.48) .. controls (382.69,100.68) and (396.11,82.09) .. (414.95,74.9) ;  
\draw [line width=2.25]    (413.64,75.51) .. controls (417.41,73.94) and (426.16,68.39) .. (424,56.57) ;
\draw [line width=2.25]    (450.66,75.36) .. controls (446.89,73.79) and (438.14,68.24) .. (440.3,56.42) ;

\draw (602,193) node [anchor=north west][inner sep=0.75pt]  [font=\LARGE]  {$u'$};
\draw (425,200) node [anchor=north west][inner sep=0.75pt]  [font=\LARGE]  {$v$};

\end{tikzpicture}

}
\caption{Basis elements of $A_{2,0}$}
\label{fig:A_2_0}
\end{figure}

\noindent

Proposition \ref{linear independence} implies that the elements 
\begin{equation}\label{eq:A_2_0}
    \{z^ku', z^lv : k,l \in \N_0\}
\end{equation}
constitute a basis for $A_{2,0}$, which we shall employ in the proof of Lemma \ref{A_1_0_lemma}.

\bigskip

We next describe the $PGL_2$ analogue of $A_{n,m},$ which we denote by $A_{n,m}^Y.$

\begin{definition}
An $(n,m)$ web diagram on the annulus is a web with $n$ univalent endpoints on the outer boundary and $m$ univalent endpoints on the inner boundary. The $\C$ vector space spanned by $(n,m)$ webs subject to the $PGL_2$ relations in (\ref{eq:skein3}) is denoted $A_{n,m}^Y$.
\end{definition}

Similar to the Kauffman bracket version, in the case $m=n,$ we define multiplication in $A_{n,n}^Y$ to be given by concatenation of annular diagrams.

For brevity, we will use the same notation as $A_{n,n}$ for webs with no vertices in $A_{n,n}^Y$. In particular, powers of $u$ and $z$ will represent the same diagrams as in $A_{n,n}$.

\begin{lemma}\thlabel{A_1_0_lemma}
$\{\omega'z^j : j \in \N_0\}$ is a basis for $A_{1,0}^Y$ where $\omega'$ and $z$ are defined as below.
\end{lemma}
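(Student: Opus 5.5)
The plan is to prove linear independence by passing to the Kauffman bracket side through the map $\varphi$, and to prove spanning by a reduction argument on annular webs. I read $\omega'$ as the web made of a single edge from the marked point on the outer boundary to a trivalent vertex on a loop encircling the core. The hole side of the loop carries $j$ parallel core loops in $\omega' z^j$. The spanning step is the main obstacle.

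\emph{Linear independence.} Consider the annular version of $\varphi$, namely $\varphi\colon A^Y_{1,0}\to A_{2,0}$. It doubles the boundary endpoint into two endpoints and inserts a second Jones--Wenzl projector on every edge.
\begin{enumerate}
\item First compute $\varphi(\omega')$. Resolving the trivalent vertex turns the loop through the vertex into the element $v$ of Figure \ref{fig:A_2_0}: two strands leaving the outer boundary and running around the hole.
\item Expand the projector on the boundary edge. The identity term gives $v$. The cup--cap term gives $u'$ times the closure of $v$, which is a core loop, so it contributes a scalar times $u'z$.
\item The projectors on the loop edges only add terms of the same two shapes, possibly with trivial loops evaluated to scalars. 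Hence
\[
\varphi(\omega' z^j) = c\, v z^j + (\text{linear combination of } u' z^{k}),
\]
with $c \neq 0$ a scalar depending on $[2]$.
\item By the basis (\ref{eq:A_2_0}) of $A_{2,0}$, the coefficients of $v z^j$ are triangular with respect to $j$. So the images are independent, and hence so are the $\omega' z^j$.
\end{enumerate}

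\emph{Spanning.} Start with an arbitrary $(1,0)$ web in the annulus.
\begin{enumerate}
\item Use the crossing relation of (\ref{I-H}) to write it as a combination of planar webs.
\item Induct on the number of trivalent vertices. Contractible loops evaluate to scalars. Any bigon or triangle face not containing the hole collapses by the bigon and triangle relations, lowering the vertex count.
\item An interior face of degree $\ge 4$ that is a disk can be shrunk by an I--H move on one of its edges. This lowers that face's degree by one at the cost of terms with fewer vertices. Repeating this reduces the face to a triangle, which is then removed.
\item An Euler characteristic count then shows what remains. The only faces allowed are the face containing the hole, the face(s) touching the outer boundary, and faces between nested essential cycles.
\item Such a web is a collection of core loops $z^j$ together with one component containing the endpoint. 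That component either is $\omega'$, or is a tree, or carries several essential cycles joined by edges.
\item A tree ending at a univalent boundary point must contain a trivalent vertex leading to a loop-free leaf, so it vanishes. This uses the lollipop/tadpole relation, which is zero in $\mathrm{PGL}_2$ theory (equivalently, it follows from the bigon relation).
\item Two nested essential cycles joined by edges bound annular regions whose faces are disks. These reduce by I--H moves until a bigon or triangle appears.
\end{enumerate}

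The hardest step is making the face-reduction terminate rigorously, so that no I--H move just cycles. I would handle it with a complexity given by the number of vertices first and then the sum of degrees of interior disk faces. I would check that each I--H move either strictly decreases this complexity or produces terms with fewer vertices.
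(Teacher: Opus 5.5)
Your linear-independence argument is the paper's: push forward by $\varphi$ to $A_{2,0}$ and read off triangularity in the basis $\{z^ku',z^lv\}$. Your formula for the image is off, though. Resolving the vertex of $\omega'$ also closes the inner strand of the doubled loop into a core loop, and $\varphi(z)=z^2-1$, not $z$. In fact $\varphi(\omega'z^j)=(zv+\frac{1}{[2]}z^2u')(z^2-1)^j$, because the two $u'$ terms coming from the cup--cap on the loop edge cancel. So the $v$-part is $vz(z^2-1)^j$ rather than $c\,vz^j$. Its leading term $vz^{2j+1}$ still gives triangularity, so the conclusion stands.

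For spanning you take a genuinely different route.
\begin{itemize}
\item The paper inducts on the number of vertices of the component through the marked point. It cuts along a spanning tree so the complement is a pair of pants, counts the $s+1\ge 3$ regions cut out by the $s\ge 2$ matching arcs to find an unpunctured one, and uses the I--H equivalence of all trivalent trees with the same boundary to make that region a monogon or bigon. Closed components are delegated to Chbili.
\item You do face reduction on all components and finish with an Euler characteristic count. Your route treats closed components uniformly and identifies the irreducible webs directly. The paper's tree trick avoids any bookkeeping of face degrees and carries over directly to $A_{1,1}^Y$.
\end{itemize}

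Two parts of your version need repair.

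\begin{itemize}
\item The complexity you propose does not decrease. An I--H move on an edge $e$ lowers by one the degrees of the two faces on either side of $e$. It raises by one the degrees of the two faces meeting $e$ only at an endpoint. So the sum of interior disk-face degrees can stay unchanged. The complexity is also unnecessary: fixing one face $F$ and doing I--H on its edges, as in your step 3, strictly lowers $\deg F$. The correction terms have fewer vertices, so induction on vertex count suffices.
\item Steps 5--7 should be replaced by the actual count. The component cannot be a tree, since its only univalent vertex is the marked point, and ``loop-free leaves'' do not exist in a web. Adjoin the outer boundary circle as a loop at the marked point; this gives a connected trivalent graph on $S^2$. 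Hence a component with $V$ vertices has $1+(V+1)/2$ faces in the annulus. Every face other than the one touching the outer boundary and the one containing the hole is a disk, since its complement in $S^2$ is connected. With no interior disk faces left, this forces $V=1$ with those two faces distinct, i.e.\ the component is $\omega'$.
\end{itemize}

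The same count forces each closed component to be a core loop. Core loops cannot lie outside the loop of $\omega'$, because the pendant edge would cross them. So what remains is $\omega'z^j$.
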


\begin{figure}[h!]
\centering
\resizebox{6cm}{!}{

\begin{tikzpicture}[x=0.75pt,y=0.75pt,yscale=-1,xscale=1]

\draw   (45.5,124.02) .. controls (45.5,75.13) and (85.13,35.5) .. (134.02,35.5) .. controls (182.9,35.5) and (222.53,75.13) .. (222.53,124.02) .. controls (222.53,172.9) and (182.9,212.53) .. (134.02,212.53) .. controls (85.13,212.53) and (45.5,172.9) .. (45.5,124.02) -- cycle ;
\draw   (110.91,124.02) .. controls (110.91,111.25) and (121.25,100.91) .. (134.02,100.91) .. controls (146.78,100.91) and (157.12,111.25) .. (157.12,124.02) .. controls (157.12,136.78) and (146.78,147.12) .. (134.02,147.12) .. controls (121.25,147.12) and (110.91,136.78) .. (110.91,124.02) -- cycle ;
\draw  [line width=2.25]  (70.87,124.02) .. controls (70.87,89.14) and (99.14,60.87) .. (134.02,60.87) .. controls (168.89,60.87) and (197.16,89.14) .. (197.16,124.02) .. controls (197.16,158.89) and (168.89,187.16) .. (134.02,187.16) .. controls (99.14,187.16) and (70.87,158.89) .. (70.87,124.02) -- cycle ;
\draw [line width=2.25]    (134.02,35.5) -- (134.02,60.87) ;
\draw   (265.5,124.02) .. controls (265.5,75.13) and (305.13,35.5) .. (354.02,35.5) .. controls (402.9,35.5) and (442.53,75.13) .. (442.53,124.02) .. controls (442.53,172.9) and (402.9,212.53) .. (354.02,212.53) .. controls (305.13,212.53) and (265.5,172.9) .. (265.5,124.02) -- cycle ;
\draw   (330.91,124.02) .. controls (330.91,111.25) and (341.25,100.91) .. (354.02,100.91) .. controls (366.78,100.91) and (377.12,111.25) .. (377.12,124.02) .. controls (377.12,136.78) and (366.78,147.12) .. (354.02,147.12) .. controls (341.25,147.12) and (330.91,136.78) .. (330.91,124.02) -- cycle ;
\draw  [line width=2.25]  (290.87,124.02) .. controls (290.87,89.14) and (319.14,60.87) .. (354.02,60.87) .. controls (388.89,60.87) and (417.16,89.14) .. (417.16,124.02) .. controls (417.16,158.89) and (388.89,187.16) .. (354.02,187.16) .. controls (319.14,187.16) and (290.87,158.89) .. (290.87,124.02) -- cycle ;

\draw (120.5,220.35) node [anchor=north west][inner sep=0.75pt] [font=\huge]   {$\omega' $};
\draw (343,232.45) node [anchor=north west][inner sep=0.75pt] [font=\huge]   {$z$};

\end{tikzpicture}

}
\end{figure}

\begin{proof}
We first prove that $\{\omega'z^j : j \in \N_0\}$ spans $A_{1, 0}^Y$. Suppose $w \in A_{1, 0}^Y$ is a $\mathbb{C}$-linear combination of tangle diagrams. For each tangle diagram, we eliminate crossings via \eqref{eq:skein3}. The result is a linear combination of crossingless disjoint unions of trivalent graphs. For each such disjoint union of trivalent graphs, there is exactly one connected component that contains the marked point. Any other connected component is a closed web in the annulus and may be reduced by the skein relations \eqref{eq:skein3} to a polynomial in $z$, by the arguments of  \cite{Chb06}.

It remains to prove that the component containing the marked point may be expressed as a linear combination of elements of the form $\{\omega'z^j : j \in \N_0\}$. We proceed by induction on the number of vertices in this component. When $n=1$, this component is isotopic to either the web pictured:

\begin{figure}[h!]
\resizebox{3cm}{!}{

\begin{tikzpicture}[x=0.75pt,y=0.75pt,yscale=-1,xscale=1]

\draw   (45.5,124.02) .. controls (45.5,75.13) and (85.13,35.5) .. (134.02,35.5) .. controls (182.9,35.5) and (222.53,75.13) .. (222.53,124.02) .. controls (222.53,172.9) and (182.9,212.53) .. (134.02,212.53) .. controls (85.13,212.53) and (45.5,172.9) .. (45.5,124.02) -- cycle ;
\draw [line width=2.25] (134.02, 90.87) circle [radius=30];
\draw (134.02, 165) circle [radius=25];
\draw [line width=2.25]    (134.02,35.5) -- (134.02,60.87) ;

\end{tikzpicture}

}\raise 7ex \hbox{,}
\end{figure}

\noindent
or else it is isotopic to $\omega^\prime$. The first web is zero by \eqref{eq:skein3}.

Now suppose $W$ is a connected web containing the marked point and having $n>1$ internal trivalent vertices. Assume the inductive hypothesis that any connected web containing the marked point and having $k<n$ internal vertices is in the span of $\{\omega'z^j: j \in \N_0\}$. Now, take a spanning tree of $W$, not considering the marked point as a vertex. There exists a neighborhood of the tree that is homeomorphic to a disk, with its complement being homeomorphic to a disk with two punctures. This splits the web into two diagrams. The first diagram $B$ is a trivalent tree containing the $n$ internal trivalent vertices and meeting the boundary of the disk at $n+2$ univalent endpoints. The second diagram $D$ is a planar matching on the twice punctured disk with $n+2$ marked points on the outer boundary and one marked point at a puncture.

\begin{figure}[h!]
    \centering
    \resizebox{8cm}{!}{

\begin{tikzpicture}[x=0.75pt,y=0.75pt,yscale=-1,xscale=1]

\draw   (272.2,95.63) .. controls (272.2,63.25) and (306.05,37) .. (347.8,37) .. controls (389.55,37) and (423.4,63.25) .. (423.4,95.63) .. controls (423.4,128.01) and (389.55,154.26) .. (347.8,154.26) .. controls (306.05,154.26) and (272.2,128.01) .. (272.2,95.63) -- cycle ;
\draw   (302.2,95.63) .. controls (302.2,90.05) and (306.72,85.53) .. (312.3,85.53) .. controls (317.88,85.53) and (322.4,90.05) .. (322.4,95.63) .. controls (322.4,101.21) and (317.88,105.73) .. (312.3,105.73) .. controls (306.72,105.73) and (302.2,101.21) .. (302.2,95.63) -- cycle ;
\draw   (373.2,95.23) .. controls (373.2,89.65) and (377.72,85.13) .. (383.3,85.13) .. controls (388.88,85.13) and (393.4,89.65) .. (393.4,95.23) .. controls (393.4,100.81) and (388.88,105.33) .. (383.3,105.33) .. controls (377.72,105.33) and (373.2,100.81) .. (373.2,95.23) -- cycle ;
\draw [line width=2.25]    (368.93,39.51) .. controls (368.53,49.77) and (405.53,51.18) .. (407.53,93.18) .. controls (409.53,135.18) and (368.04,138.86) .. (368.27,152.18) ;
\draw   (82.2,95.63) .. controls (82.2,63.25) and (116.05,37) .. (157.8,37) .. controls (199.55,37) and (233.4,63.25) .. (233.4,95.63) .. controls (233.4,128.01) and (199.55,154.26) .. (157.8,154.26) .. controls (116.05,154.26) and (82.2,128.01) .. (82.2,95.63) -- cycle ;
\draw [line width=2.25]    (82.2,95.63) -- (144.6,71.18) ;
\draw [line width=2.25]    (136.8,38.67) -- (151.47,95.96) ;
\draw [color={rgb, 255:red, 0; green, 0; blue, 0 }  ,draw opacity=1 ][line width=2.25]    (151.47,95.96) -- (137.6,151.85) ;
\draw [line width=2.25]    (272.2,95.63) -- (302.2,95.63) ;
\draw [line width=2.25]    (181.13,40) -- (173.27,95.85) ;
\draw [line width=2.25]    (325.6,39.85) .. controls (329.6,48.85) and (343.93,51.85) .. (344.93,95.85) .. controls (345.93,139.85) and (323.27,137.51) .. (320.27,149.85) ;
\draw [line width=2.25]    (173.27,95.85) -- (185.93,150.51) ;
\draw [line width=2.25]    (151.47,95.96) -- (173.27,95.85) ;

\draw (150,163.9) node [anchor=north west][inner sep=0.75pt]    {$B$};
\draw (340,163.69) node [anchor=north west][inner sep=0.75pt]    {$D$};

\end{tikzpicture}

}
    \caption{An example of a web decomposition where $n=3.$}
    \label{fig:placeholder}
\end{figure}

The planar matching $D$ must have an edge $E$ traveling from the puncture to the boundary of the 2-punctured disk. Consider $D\setminus E$ as a diagram on the 2-punctured disk. We have that $D\setminus E$ is a planar matching with $n+1>2$ endpoints, implying that $D \setminus E$ is comprised of at least $s=\frac{n+1}{2}\geq2$ strands on the 2-punctured disk. The $s$ strands separate the 2-punctured disk into $s+1\geq3$ regions. Since there are only 2 punctures on the disk, there must be at least one unpunctured region on the disk. Recall that all trivalent trees with $n$ vertices are equivalent up to I-H moves. There exists a tree $B'$ such that replacing $B$ with $B'$ will enclose the unpunctured face of $D$ with a monogon or bigon. Thus, we can use I-H skein relations from \eqref{eq:skein3} to rewrite $W$ as a reducible web plus webs with fewer vertices. This process reduces our web to a linear combination of webs with fewer vertices which, by induction, is in the span of $\{\omega'z^j : j \in \N_0\}$. Thus, we have established that $\{\omega'z^j : j \in \N_0\}$ is a spanning set for $A_{1,0}^Y.$

\begin{figure}[h!]
    \centering
    \resizebox{8cm}{!}{

\begin{tikzpicture}[x=0.75pt,y=0.75pt,yscale=-1,xscale=1]

\draw   (71.53,120.3) .. controls (71.53,87.92) and (105.38,61.67) .. (147.13,61.67) .. controls (188.89,61.67) and (222.73,87.92) .. (222.73,120.3) .. controls (222.73,152.68) and (188.89,178.93) .. (147.13,178.93) .. controls (105.38,178.93) and (71.53,152.68) .. (71.53,120.3) -- cycle ;
\draw   (101.53,120.3) .. controls (101.53,114.72) and (106.06,110.2) .. (111.63,110.2) .. controls (117.21,110.2) and (121.73,114.72) .. (121.73,120.3) .. controls (121.73,125.88) and (117.21,130.4) .. (111.63,130.4) .. controls (106.06,130.4) and (101.53,125.88) .. (101.53,120.3) -- cycle ;
\draw   (172.53,119.9) .. controls (172.53,114.32) and (177.06,109.8) .. (182.63,109.8) .. controls (188.21,109.8) and (192.73,114.32) .. (192.73,119.9) .. controls (192.73,125.48) and (188.21,130) .. (182.63,130) .. controls (177.06,130) and (172.53,125.48) .. (172.53,119.9) -- cycle ;
\draw [line width=2.25]    (71.53,120.3) -- (101.53,120.3) ;
\draw [line width=2.25]    (136,61.63) .. controls (136.2,71.73) and (135.2,94.73) .. (135.2,119.73) .. controls (135.2,144.73) and (135.7,165.23) .. (135.5,178.63) ;
\draw [line width=2.25]    (158,62.63) .. controls (157.9,73.23) and (158.4,93.73) .. (158.4,118.73) .. controls (158.4,143.73) and (157.2,165.73) .. (157.48,177.9) ;
\draw [color={rgb, 255:red, 208; green, 2; blue, 27 }  ,draw opacity=1 ][line width=2.25]    (147.25,50.41) -- (147.25,33.34) ;
\draw   (262.03,120.63) .. controls (262.03,88.25) and (295.88,62) .. (337.63,62) .. controls (379.39,62) and (413.23,88.25) .. (413.23,120.63) .. controls (413.23,153.01) and (379.39,179.26) .. (337.63,179.26) .. controls (295.88,179.26) and (262.03,153.01) .. (262.03,120.63) -- cycle ;
\draw   (292.03,120.63) .. controls (292.03,115.06) and (296.56,110.53) .. (302.13,110.53) .. controls (307.71,110.53) and (312.23,115.06) .. (312.23,120.63) .. controls (312.23,126.21) and (307.71,130.73) .. (302.13,130.73) .. controls (296.56,130.73) and (292.03,126.21) .. (292.03,120.63) -- cycle ;
\draw   (363.03,120.23) .. controls (363.03,114.66) and (367.56,110.13) .. (373.13,110.13) .. controls (378.71,110.13) and (383.23,114.66) .. (383.23,120.23) .. controls (383.23,125.81) and (378.71,130.33) .. (373.13,130.33) .. controls (367.56,130.33) and (363.03,125.81) .. (363.03,120.23) -- cycle ;
\draw [line width=2.25]    (262.03,120.63) -- (292.03,120.63) ;
\draw [line width=2.25]    (400.4,87.63) .. controls (391.4,100.63) and (387.4,131.63) .. (398.9,155.63) ;
\draw  [draw opacity=0][line width=2.25]  (136.05,62.59) .. controls (136.02,62.28) and (136,61.96) .. (136,61.63) .. controls (136,56.05) and (140.95,51.53) .. (147.05,51.53) .. controls (153.16,51.53) and (158.1,56.05) .. (158.1,61.63) .. controls (158.1,61.96) and (158.09,62.28) .. (158.05,62.59) -- (147.05,61.63) -- cycle ; \draw  [color={rgb, 255:red, 208; green, 2; blue, 27 }  ,draw opacity=1 ][line width=2.25]  (136.05,62.59) .. controls (136.02,62.28) and (136,61.96) .. (136,61.63) .. controls (136,56.05) and (140.95,51.53) .. (147.05,51.53) .. controls (153.16,51.53) and (158.1,56.05) .. (158.1,61.63) .. controls (158.1,61.96) and (158.09,62.28) .. (158.05,62.59) ;  
\draw [color={rgb, 255:red, 208; green, 2; blue, 27 }  ,draw opacity=1 ][line width=2.25]    (146.68,190.09) -- (146.68,207.16) ;
\draw  [draw opacity=0][line width=2.25]  (135.48,177.9) .. controls (135.45,178.22) and (135.43,178.54) .. (135.43,178.87) .. controls (135.43,184.44) and (140.38,188.97) .. (146.48,188.97) .. controls (152.58,188.97) and (157.53,184.44) .. (157.53,178.87) .. controls (157.53,178.54) and (157.52,178.22) .. (157.48,177.9) -- (146.48,178.87) -- cycle ; \draw  [color={rgb, 255:red, 208; green, 2; blue, 27 }  ,draw opacity=1 ][line width=2.25]  (135.48,177.9) .. controls (135.45,178.22) and (135.43,178.54) .. (135.43,178.87) .. controls (135.43,184.44) and (140.38,188.97) .. (146.48,188.97) .. controls (152.58,188.97) and (157.53,184.44) .. (157.53,178.87) .. controls (157.53,178.54) and (157.52,178.22) .. (157.48,177.9) ;  
\draw [color={rgb, 255:red, 208; green, 2; blue, 27 }  ,draw opacity=1 ][line width=2.25]    (398.9,155.63) .. controls (410.32,180.3) and (425,150.79) .. (425.67,122.31) .. controls (426.33,93.84) and (414.15,71.64) .. (400.4,87.63) ;
\draw [color={rgb, 255:red, 208; green, 2; blue, 27 }  ,draw opacity=1 ][line width=2.25]    (425.67,122.31) -- (438.67,122.31) ;

\end{tikzpicture}

}
    \caption{An example of a bigon and monogon resolution}
    \label{fig:bigon_monogon}
\end{figure}

To see that $\{\omega'z^j : j \in \N_0\}$ is linearly independent, we can check that its image under the map $\varphi: A_{1,0}^Y \to A_{2, 0}$ from \eqref{eq:phimap} is linearly independent. We have:
\begin{align*}\varphi(\omega'z^j) &= \varphi(\omega')\cdot \varphi(z^j) = \varphi(\omega')(\varphi(z))^j=(zv + \frac{1}{[2]}z^2 u')(z^2 -1)^j,
\end{align*}

where $v,u'$ are the elements of $A_{2,0}$ pictured in Figure \ref{fig:A_2_0}.

Written in the basis for $A_{2,0}$ given by the elements listed in \eqref{eq:A_2_0}, the term with the most core loops in $\varphi(\omega'z^j)$ has $2j+2$ core loops. Thus, $\varphi$ sends $\{\omega' z^j: j \in \mathbb{N}_0\}$ to a linearly independent subset of $A_{2,0}$ and so we conclude that $\{\omega' z^j: j \in \mathbb{N}_0\}$ is linearly independent.
\end{proof}

\begin{proposition}\label{prop:A11 basis}
The set $\{u^k, s, \omega z^j : k \in \Z, j \in \N_0\}$ is a basis for $A_{1, 1}^Y$ where $u, s, \omega$ and $z$ are as depicted below.
\end{proposition}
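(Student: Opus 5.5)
The proof will follow the same two-step pattern as Lemma \ref{A_1_0_lemma}: first show that $\{u^k, s, \omega z^j\}$ spans $A_{1,1}^Y$ by reducing webs, then show linear independence by pushing everything through $\varphi: A_{1,1}^Y \to A_{2,2}$ and reading off coefficients in the basis \eqref{eq:A_2_2}.

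\textbf{Spanning.} Start from an arbitrary web in $A_{1,1}^Y$ and use \eqref{eq:skein3} to resolve all crossings. This leaves a linear combination of crossingless trivalent graphs. Closed components that contain neither marked point reduce to polynomials in $z$, by the arguments of \cite{Chb06}. If the two marked points lie in different components, each component is an element of $A_{1,0}^Y$ (or its inner-boundary analogue). By Lemma \ref{A_1_0_lemma} each is then a combination of $\omega' z^j$, and the product of two such pieces should be the diagram $s$ times powers of $z$. I expect the picture to force $s$ to absorb the extra loops, or else this product reduces to $s$ plus terms already in the span.

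If the marked points lie in the same component, I induct on the number of internal vertices, as in Lemma \ref{A_1_0_lemma}. Take a spanning tree and thicken it to a disk; its complement is now an annulus with a puncture, i.e.\ a disk with two punctures, plus two arcs running to the marked points. An edge-counting argument shows that when there are enough vertices, some complementary region contains no puncture and is not adjacent to either boundary. I then use I-H moves to change the tree so that this region becomes a monogon or bigon, and reduce.

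The base cases are:
\begin{itemize}
\item zero vertices: a single strand, which is some $u^k$;
\item two vertices: the web $\omega$ (a strand attached by an edge to a core loop), up to twisting.
\end{itemize}
Twists $u^k$ composed with $\omega$ should be absorbed into $\omega$ itself, since the core loop can slide around. The main obstacle is checking that this whole case analysis closes up, in particular that the leftover configurations with a single trivalent vertex cycle vanish or reduce to $\omega z^j$.

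\textbf{Independence.} Apply $\varphi$, which is multiplicative under concatenation, and compute:
\begin{itemize}
\item $\varphi(u^k)=e\cdot u^{k}$ composed with the projector, giving $u^k$ plus lower terms involving $f_i$ and $z$;
\item $\varphi(z)=z^2-1$;
\item $\varphi(\omega)$ and $\varphi(s)$ expanded explicitly in terms of $f_1,\dots,f_4$ and $z$.
\end{itemize}
Then order the basis \eqref{eq:A_2_2}. The $u^{k/2}$ components with $k$ even detect the $u^k$ uniquely: each $\varphi(u^k)$ has a leading $u^{k}$ term with nonzero coefficient, and no other element of the set contributes to it. The element $\varphi(\omega z^j)$ has a highest core-loop term, $f_i z^{2j+1}$ or similar, with nonzero coefficient, and these degrees are distinct. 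Finally, $\varphi(s)$ contains a term, likely $f_2$ or $f_4$ with no loops, that does not appear in the $\varphi(\omega z^j)$; if it does appear there, I will compare a different coefficient instead. A triangularity argument with respect to these leading terms then gives linear independence.
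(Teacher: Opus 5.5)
Your overall plan matches the paper's: resolve crossings, split according to whether the two marked points lie on one component, use \thref{A_1_0_lemma} in the disconnected case, use a spanning tree plus I-H moves in the connected case, then map into $A_{2,2}$ via $\varphi$. However, the spanning step has a genuine gap, which comes from having the roles of $s$ and $\omega$ reversed.

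In the disconnected case there is nothing to absorb. Here $\omega$ is the disjoint union of the outer and inner lassos, $p(\omega'z^i\otimes\omega'z^j)=\omega z^{i+j}$, and the infinite family $\omega z^j$ in the basis is exactly this image. These elements are independent, so no absorption of the extra loops into a single element could occur.

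In the connected case no core loop can appear, since a component joining the two boundary circles meets every essential curve. So the connected two-vertex web is not the one carrying powers of $z$. It is $s$: when the complementary strand separates the two punctures, $W=su^l$ for some $l\in\Z$. Your assertion that such twists are absorbed by sliding the loop is false here. The two stems attach at different vertices of the essential cycle, so rotating the cycle unwinds one stem only by winding the other. The missing ingredient is the direct skein computation
\[
su=s+\tfrac{1}{[2]}(u^2-1),\qquad su^{-1}=s-\tfrac{1}{[2]}(u-u^{-1}).
\]
This brings every $su^l$ into the span of $s$ and the $u^k$. Since the $u^k$ are independent, it also shows $su\neq s$. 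Without it you only get the larger spanning set $\{u^k,su^l,\omega z^j\}$. Sliding does absorb twists for $\omega$, where each lasso's vertex moves around its own loop; that is why no terms $\omega z^j u^l$ are needed.

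The independence step is right in outline, but the coordinates you propose need correcting. First, $\varphi(s)=f_1+\frac{1}{[2]}(f_2+f_3)z+\frac{1}{[2]^2}f_4(z^2+2)+\frac{1}{[2]}(u+1)$ does contribute to the coordinates of $u^0$ and $u$. Second, the loopless $f_4$ term appears in every $\varphi(u^k)=u^k+\frac{1}{[2]}f_4$. Third, $f_2$ appears in $\varphi(s)$ only as $f_2z$, not loopless. A coordinate that isolates $s$ is $f_1z^0$. Its coefficient is $1$ in $\varphi(s)$, and $0$ in every $\varphi(u^k)$ and every $\varphi(\omega z^j)=\bigl(f_1z^2+\frac{1}{[2]}(f_2+f_3)z^3+\frac{1}{[2]^2}f_4z^4\bigr)(z^2-1)^j$. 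So eliminate $s$ first, then the $\omega z^j$ by top $z$-degree in the $f_1$ coordinate, and the $u^k$ last.
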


\begin{center}
\includegraphics[width=0.8\linewidth]{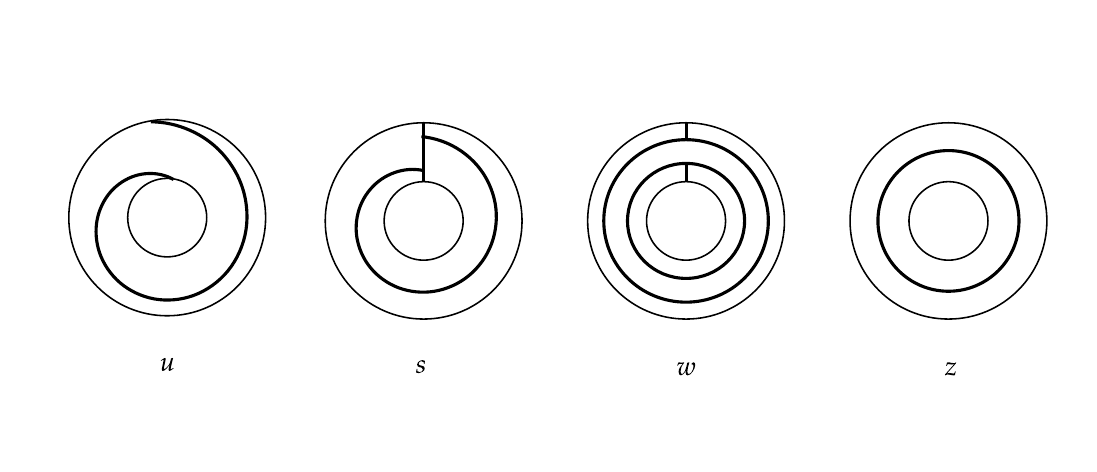}
\end{center}

\begin{proof}
We will first prove that $\{ u^k, su^l, \omega z^j \}$ is a spanning set before shrinking this set to a basis. Let $W \in A_{1,1}^Y$ be a web. First observe that any crossings can be eliminated using \eqref{eq:skein3}. The resulting linear combination of webs are all crossingless disjoint unions of connected webs. Every such crossingless web must have at most two components connected to the marked points. Given a crossingless web, we have two cases:

\bigskip
\textbf{Case 1}: Disconnected marked points
\bigskip

We will assume the two marked points do not lie on the same connected component. Consider the $\C$-linear map $p: A_{1,0}^Y \otimes A_{0,1}^Y \to A_{1,1}^Y$ by concatenation of diagrams given in Figure \ref{fig:cut_point}.

\begin{figure}[h!]
    \centering
    \resizebox{10cm}{!}{

\begin{tikzpicture}[x=0.75pt,y=0.75pt,yscale=-1,xscale=1]

\draw   (509.5,120.02) .. controls (509.5,71.13) and (549.13,31.5) .. (598.02,31.5) .. controls (646.9,31.5) and (686.53,71.13) .. (686.53,120.02) .. controls (686.53,168.9) and (646.9,208.53) .. (598.02,208.53) .. controls (549.13,208.53) and (509.5,168.9) .. (509.5,120.02) -- cycle ;
\draw   (574.91,120.02) .. controls (574.91,107.25) and (585.25,96.91) .. (598.02,96.91) .. controls (610.78,96.91) and (621.12,107.25) .. (621.12,120.02) .. controls (621.12,132.78) and (610.78,143.12) .. (598.02,143.12) .. controls (585.25,143.12) and (574.91,132.78) .. (574.91,120.02) -- cycle ;
\draw  [line width=2.25]  (524.41,120.02) .. controls (524.41,79.37) and (557.37,46.41) .. (598.02,46.41) .. controls (638.67,46.41) and (671.62,79.37) .. (671.62,120.02) .. controls (671.62,160.67) and (638.67,193.62) .. (598.02,193.62) .. controls (557.37,193.62) and (524.41,160.67) .. (524.41,120.02) -- cycle ;
\draw [line width=2.25]    (598.02,31.5) -- (598.02,47.87) ;
\draw [line width=2.25]    (598.02,82.75) -- (598.02,96.91) ;
\draw  [line width=2.25]  (560.75,120.02) .. controls (560.75,99.44) and (577.44,82.75) .. (598.02,82.75) .. controls (618.6,82.75) and (635.28,99.44) .. (635.28,120.02) .. controls (635.28,140.6) and (618.6,157.28) .. (598.02,157.28) .. controls (577.44,157.28) and (560.75,140.6) .. (560.75,120.02) -- cycle ;
\draw  [dash pattern={on 4.5pt off 4.5pt}] (543.78,120.02) .. controls (543.78,90.06) and (568.06,65.78) .. (598.02,65.78) .. controls (627.97,65.78) and (652.25,90.06) .. (652.25,120.02) .. controls (652.25,149.97) and (627.97,174.25) .. (598.02,174.25) .. controls (568.06,174.25) and (543.78,149.97) .. (543.78,120.02) -- cycle ;
\draw   (19.5,120.02) .. controls (19.5,71.13) and (59.13,31.5) .. (108.02,31.5) .. controls (156.9,31.5) and (196.53,71.13) .. (196.53,120.02) .. controls (196.53,168.9) and (156.9,208.53) .. (108.02,208.53) .. controls (59.13,208.53) and (19.5,168.9) .. (19.5,120.02) -- cycle ;
\draw   (84.91,120.02) .. controls (84.91,107.25) and (95.25,96.91) .. (108.02,96.91) .. controls (120.78,96.91) and (131.12,107.25) .. (131.12,120.02) .. controls (131.12,132.78) and (120.78,143.12) .. (108.02,143.12) .. controls (95.25,143.12) and (84.91,132.78) .. (84.91,120.02) -- cycle ;
\draw  [line width=2.25]  (34.41,120.02) .. controls (34.41,79.37) and (67.37,46.41) .. (108.02,46.41) .. controls (148.67,46.41) and (181.62,79.37) .. (181.62,120.02) .. controls (181.62,160.67) and (148.67,193.62) .. (108.02,193.62) .. controls (67.37,193.62) and (34.41,160.67) .. (34.41,120.02) -- cycle ;
\draw [line width=2.25]    (108.02,31.5) -- (108.02,47.87) ;
\draw   (258.5,120.02) .. controls (258.5,71.13) and (298.13,31.5) .. (347.02,31.5) .. controls (395.9,31.5) and (435.53,71.13) .. (435.53,120.02) .. controls (435.53,168.9) and (395.9,208.53) .. (347.02,208.53) .. controls (298.13,208.53) and (258.5,168.9) .. (258.5,120.02) -- cycle ;
\draw   (323.91,120.02) .. controls (323.91,107.25) and (334.25,96.91) .. (347.02,96.91) .. controls (359.78,96.91) and (370.12,107.25) .. (370.12,120.02) .. controls (370.12,132.78) and (359.78,143.12) .. (347.02,143.12) .. controls (334.25,143.12) and (323.91,132.78) .. (323.91,120.02) -- cycle ;
\draw [line width=2.25]    (347.02,82.75) -- (347.02,96.91) ;
\draw  [line width=2.25]  (309.75,120.02) .. controls (309.75,99.44) and (326.44,82.75) .. (347.02,82.75) .. controls (367.6,82.75) and (384.28,99.44) .. (384.28,120.02) .. controls (384.28,140.6) and (367.6,157.28) .. (347.02,157.28) .. controls (326.44,157.28) and (309.75,140.6) .. (309.75,120.02) -- cycle ;

\draw (455,115.4) node [anchor=north west][inner sep=0.75pt]  [font=\huge]  {$\mapsto $};
\draw (215,110.4) node [anchor=north west][inner sep=0.75pt]  [font=\huge]  {$\otimes $};

\end{tikzpicture}

}
    \caption{The map from $A_{1,0}^Y \otimes A_{0,1}^Y$ to $A_{1,1}^Y$}
    \label{fig:cut_point}
\end{figure}

The image of $p$ lies in the subspace of $A_{1,1}^Y$ spanned by all crossingless webs where the two marked points are in two separate connected components. Given a web $W$ in this subspace, there exists a closed path around the annular hole that doesn't cross $W$. In other words, we can split $W$ into two diagrams $W_1 \in A_{1,0}^Y$ and $W_2 \in A_{0,1}^Y$. Then, $p(W_1 \otimes W_2) = W$. Thus, $p$ surjects onto this subspace.

By \thref{A_1_0_lemma}, $\{\omega' z^i \otimes \omega' z^j : i,j \in \N_0\}$ is a basis for $A_{1,0}^Y \otimes A_{0,1}^Y$. Since $p(\omega'z^i \otimes \omega' z^j) = \omega z^{i+j}$, the image of $p$ is precisely $\operatorname{span}(\{\omega z^j : j \in \N_0\})$, which is contained in the span of our claimed spanning set.

\bigskip
\textbf{Case 2}: Connected marked points
\bigskip

If the two marked points lie on the same connected component of a crossingless web, then every other component must be a closed web contained in some disk. Thus, the web reduces to a scalar multiple of a web with only the single component connecting the two marked points.

We now prove that any crossingless web $W$ which has a component with univalent vertices at both punctures is in the span of $\{u^k,su^l,\omega z^j\}.$ We will induct on the number $n$ of internal vertices the component has, noting that $n$ must be even. In particular, assume that every such web component with fewer than $n$ internal vertices is in the span of $\{ u^k, su^l, \omega z^j \}$. The base case is when $n = 0.$ In this case, the web contains a component that is an embedded curve traveling between the two punctures. In this case, $W$ is a scalar multiple of $u^k$ for some $k \in \Z$.

For $n \geq 2$, we follow a similar argument as in \thref{A_1_0_lemma}.
Use a spanning tree of the graph to split $W$ into two diagrams, $B$ and $D$. The first diagram $B$ is a trivalent tree with $n$ internal trivalent vertices contained in a disk and having $n + 2$ end points on the boundary of the disk. The second diagram $D$ is a planar matching on the twice punctured disk with $n+2$ endpoints on the boundary of the disk and one endpoint at each puncture.

The diagram $D$ contains an edge $E_1$ traveling from the puncture to the boundary of the disk and an edge $E_2$ traveling from the other puncture to the boundary of the disk. Consider the diagram $D\setminus{(E_1 \cup E_2)}$ on the 2-punctured disk. This diagram is a planar matching on the 2-punctured disk having $n$ endpoints on the boundary of the disk.

If $n=2,$ the planar matching contains only 1 strand which separates the 3 punctured sphere into 2 regions. If one of these regions happens to be unpunctured, we can use the technique of the proof of \thref{A_1_0_lemma} to apply I-H skein relations to the diagram $B$ to rewrite $W$ as a web with a face that may be reduced plus terms with fewer vertices. If both regions are punctured, then we may deduce that $W$ is diffeomorphic to $s$ and so must be able to be written as a Dehn twist of $s$, which means $W=su^l$ for some $l \in \mathbb{Z}.$ 

Now if $n>2$ then this planar matching contains at least 2 strands, which separate the 2-punctured disk into at least 3 regions and at least one of these must be unpunctured. As in the proof of \thref{A_1_0_lemma}, we can apply I-H skein relations to the diagram $B$ to rewrite the web $W$ as a web with a face that may be reduced plus terms with fewer vertices. Thus, we deduce by induction that $\{ u^k, su^l, \omega z^j \}$ is a spanning set for $A_{1,1}^Y.$

\bigskip

Next, we shrink our spanning set. By direct computation we see that

\begin{equation}\label{eq:su}
su = s + \frac{1}{[2]}(u^2 - 1).
\end{equation}

Multiplying \eqref{eq:su} by $u^{-1}$ and rearranging yields $su^{-1}=s-\frac{1}{[2]}(u-u^{-1}).$ Using these two equations, we can reduce our spanning set to our claimed basis $\{ u^k, s, \omega z^j \}$. Linear independence is seen by looking at the image of these elements under the map $\varphi: A_{1,1}^Y \to A_{2,2}$.

\begin{align*}
    \varphi(u^k) &= u^k + \frac{1}{[2]}f_4 \\
    \varphi(s) &= f_1 + \frac{1}{[2]}(f_2z + f_3z) + \frac{1}{[2]^2}f_4 (z^2+2) + \frac{1}{[2]}(u + 1) \\
    \varphi(\omega z^n) &= (f_1z^2 + \frac{1}{[2]}f_2z^3 + \frac{1}{[2]}f_3z^3 + \frac{1}{[2]^2} f_4 z^4)(z^2 -1)^n
\end{align*}

Since $\{u^{k/2}, f_1z^n, f_2z^n, f_3z^n, f_4z^n\}$ form a basis for $A_{2,2}$, these are all linearly independent.

\end{proof}

The last argument in the proof provides the following corollary.

\begin{corollary}
The map $\varphi: A_{1,1}^Y \to A_{2,2}$ is injective.
\end{corollary}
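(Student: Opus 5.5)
The injectivity of $\varphi: A_{1,1}^Y \to A_{2,2}$ should follow directly from Proposition \ref{prop:A11 basis} together with the triangularity already displayed in its proof. By Proposition \ref{prop:A11 basis}, every element $x \in A_{1,1}^Y$ can be written uniquely as a finite linear combination
\[
x=\sum_{k\in\Z} a_k u^k + b\, s + \sum_{j\ge 0} c_j\, \omega z^j .
\]
It therefore suffices to show that the images $\varphi(u^k)$, $\varphi(s)$, $\varphi(\omega z^j)$ are linearly independent in $A_{2,2}$. We may use linearity of $\varphi$ and the basis \eqref{eq:A_2_2} of $A_{2,2}$ coming from Proposition \ref{linear independence}.

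Suppose $\varphi(x)=0$ and expand each image in the basis $\{u^{k/2}, f_iz^j\}$ using the formulas computed in the proof.

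First, consider the $\omega z^j$ terms. The element $\varphi(\omega z^j)$ contains no $u^{k/2}$ terms. Its $f_1$-component is $f_1 z^2(z^2-1)^j$, whose top power is $f_1z^{2j+2}$. The only other contribution to $f_1$-components is $b f_1$, coming from $\varphi(s)$, and that term has degree $0$ in $z$. Let $J$ be the largest $j$ with $c_j\neq 0$. Comparing coefficients of $f_1 z^{2J+2}$ gives $c_J=0$, since $2J+2\ge 2>0$. Descending induction then shows that all $c_j=0$.

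Next, consider $s$. The coefficient of $f_1$ in what remains is exactly $b$, because the images $\varphi(u^k)=u^k+\frac{1}{[2]}f_4$ have no $f_1$-component. Hence $b=0$.

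Finally, consider the $u^k$ terms. We are left with $\sum_k a_k\bigl(u^k+\frac{1}{[2]}f_4\bigr)=0$. Here the elements $u^k=(u^{1/2})^{2k}$ are distinct basis vectors of $A_{2,2}$, so comparing the coefficient of each $u^k$ gives $a_k=0$ for all $k$. Thus $x=0$, and $\varphi$ is injective.

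The only step requiring care is confirming the formulas for $\varphi(s)$ and $\varphi(\omega z^j)$. In particular, one must check that the $f_1$-coefficient of $\varphi(\omega z^j)$ really has leading term $z^{2j+2}$, and that $\varphi(u^k)$ contributes nothing outside $u^k$ and $f_4$. This amounts to expanding the Jones–Wenzl projectors and resolving the trivalent vertices, which was already carried out in the proof of Proposition \ref{prop:A11 basis}. So the corollary is essentially a restatement of that final linear-independence argument.
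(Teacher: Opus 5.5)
Your proposal is correct and is the paper's own argument. The paper reads this corollary off the last step of the proof of Proposition \ref{prop:A11 basis}: $\varphi$ sends the basis $\{u^k, s, \omega z^j\}$ of $A_{1,1}^Y$ to a linearly independent set in $A_{2,2}$, checked against the basis $\{u^{k/2}, f_i z^j\}$. Your triangularity bookkeeping (first the leading $f_1 z^{2j+2}$ terms, then the $f_1$-coefficient from $\varphi(s)$, then the $u^k$ coefficients) simply spells out the linear-independence check that the paper leaves implicit.
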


We will use the following fact in the next section.

\begin{corollary}
The algebra $A_{1,1}^Y$ is commutative.
\end{corollary}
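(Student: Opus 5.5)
The plan is to reduce commutativity of $A_{1,1}^Y$ to commutativity of $A_{2,2}$ restricted to the image of $\varphi$. By the preceding corollary, $\varphi: A_{1,1}^Y \to A_{2,2}$ is injective. Since $\varphi$ is defined by local replacements, it is compatible with concatenation of annular diagrams, so it is an algebra homomorphism. Hence it suffices to show that the images $\varphi(x)$, for $x$ in the basis $\{u^k, s, \omega z^j\}$ of Proposition \ref{prop:A11 basis}, pairwise commute in $A_{2,2}$.

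An alternative, more intrinsic route avoids $A_{2,2}$ entirely. We check directly that the basis elements of $A_{1,1}^Y$ pairwise commute.
\begin{itemize}
\item The elements $u^k$ are central, by the same isotopy argument as in $A_{n,n}$: a full twist of all strands around the hole can be slid through any annular diagram.
\item Any product involving $\omega z^j$ has the two marked points in different components. Indeed, $\omega z^j$ contains an essential closed path separating the inner from the outer boundary that avoids the web, and concatenation preserves such a path. So both $(\omega z^j)\cdot x$ and $x\cdot(\omega z^j)$ lie in the image of the map $p$ from Case 2 of that proof.
\item To see that the two orders agree, I would use the skein relations to push the closed loop $z$ and the $\omega'$ pieces past $x$. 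Concretely, $x\cdot \omega z^j$ is the concatenation of $x$ with $p(\omega'\otimes\omega' z^j)$. Here the inner piece $\omega'$ reattaches to $x$'s inner endpoint to give an element of $A_{1,0}^Y$, which by Lemma \ref{A_1_0_lemma} is a combination of $\omega' z^i$. The other order gives the analogous element of $A_{0,1}^Y$.
\end{itemize}

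The obstacle in this route is matching coefficients between the two orders. I would handle it by noting that reflecting the annulus across a circle (swapping boundaries), composed with mirror, identifies the two computations, though tracking $q\leftrightarrow q^{-1}$ is delicate. Because of this, I would prefer the $\varphi$ route.

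For the $\varphi$ route, the only nontrivial pairs involve $s$ and $\omega z^j$, since $\varphi(u^k)=u^k+\frac{1}{[2]}f_4$ and $u$ is central. I would verify commutation using the explicit formulas in the proof of Proposition \ref{prop:A11 basis}.
\begin{itemize}
\item The products $f_i f_j$ in $A_{2,2}$ are computed by concatenation. A cap meeting a cup either creates a core loop $z$, a trivial loop giving $-[2]$, or a twist $u^{1/2}$-type element.
\item The key observation is that the subalgebra of $A_{2,2}$ spanned by $f_1,\dots,f_4$ (times powers of $z$), together with the central $u^{k/2}$, is closed under multiplication and behaves like $2\times 2$ matrix units. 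Writing $f_1=$ (cup on outside, cap on inside), $f_2$, $f_3$, $f_4$ suitably, one gets $f_a f_b$ proportional to a single $f_c$, with coefficient a polynomial in $z$ and $[2]$.
\item In these terms, $\varphi(\omega z^j)$ is the rank-one element $(\text{outer vector})\otimes(\text{inner vector})$ with the two vectors given by $\varphi(\omega')$, and similarly for the cup/cap part of $\varphi(s)$. Commutation then amounts to checking that the two vectors are proportional with a symmetric pairing.
\end{itemize}
This is where I expect the real work: computing the structure constants $f_af_b$ and confirming the symmetry, which I anticipate follows from the outer and inner boundary components playing symmetric roles in $\varphi(\omega')$.
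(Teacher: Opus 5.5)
Your reduction agrees with the paper's. Since $u^k$ is central by isotopy, everything comes down to $s$ against $\omega z^j$, and there the paper simply reduces $s\omega$ and $\omega s$ by skein relations to $\frac{1}{[2]}z\omega+\frac{2-[3]}{[2]}\omega$. The $\varphi$ route is also legitimate. \emph{The gap is that the proposal stops exactly at the one nontrivial check, and the mechanism you predict there is not the right one.} Write $S,L$ for the short and long arcs, so that $f_1=L\otimes L$, $f_2=S\otimes L$, $f_3=L\otimes S$, $f_4=S\otimes S$ (outer $\otimes$ inner). Then $(a\otimes b)(c\otimes d)=\langle b,c\rangle\,a\otimes d$, with $\langle S,S\rangle=\langle L,L\rangle=-[2]$ and $\langle S,L\rangle=\langle L,S\rangle=z$. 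With $\beta=L+\frac{z}{[2]}S$, the formulas in the proof of Proposition \ref{prop:A11 basis} read
\[
\varphi(\omega)=z^2\,\beta\otimes\beta,\qquad \varphi(s)=\beta\otimes\beta+\frac{2}{[2]^2}\,S\otimes S+\frac{1}{[2]}(u+e).
\]
So the cap/cup part of $\varphi(s)$ is \emph{not} rank one: its coefficient matrix has determinant $2/[2]^2$. There is also a through-strand summand, which your plan does not address. "Proportional vectors with a symmetric pairing" therefore leaves you comparing $\langle S,\beta\rangle\,S\otimes\beta$ with $\langle\beta,S\rangle\,\beta\otimes S$, which are different elements. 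The missing input is $\langle S,\beta\rangle=z-\frac{z}{[2]}[2]=0$, i.e.\ a cap annihilates the Jones--Wenzl projector on the stub of $\varphi(\omega')=z\beta$. You also need $uN=Nu=N$ for every cap/cup element $N$. With these, both $\varphi(s)\varphi(\omega)$ and $\varphi(\omega)\varphi(s)$ equal $\frac{z^2-[2]^2+2}{[2]}\varphi(\omega)$. This is $\varphi$ of the paper's expression, since $\varphi(z)=z^2-1$ and $[2]^2=[3]+1$, and injectivity of $\varphi$ finishes the argument.

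\textbf{Smaller points.}
\begin{itemize}
\item $u^{1/2}$ is not central in $A_{2,2}$: it swaps short and long arcs, e.g.\ $u^{1/2}f_4=f_3$ while $f_4u^{1/2}=f_2$.
\item $f_4$ is not central either, so $\varphi(u^k)=u^k+\frac{1}{[2]}f_4$ does not by itself show that $\varphi(u^k)$ commutes with the other images. Argue centrality of $u^k$ in $A_{1,1}^Y$ directly by isotopy instead.
\item The map $p$ comes from Case 1, not Case 2.
\end{itemize}

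\textbf{The abandoned reflection route.} The real difficulty there is not $q\leftrightarrow q^{-1}$: inversion in a circle followed by the mirror is orientation-preserving on the thickened annulus. The problem is that this map sends $s$ to $su^{-1}$ rather than to $s$. Instead take $re^{i\theta}\mapsto (r_0R_0/r)e^{-i\theta}$ on the annulus (marked points at $\theta=0$), times the identity on the interval. This is an orientation-preserving diffeomorphism swapping the two boundary circles, so it induces an anti-automorphism of $A_{1,1}^Y$. It fixes $u^k$ and $\omega z^j$ directly. It also fixes $s$, because it fixes $f_1,f_4,u,z$ and swaps $f_2\leftrightarrow f_3$, hence preserves $\varphi(s)$, and $\varphi$ is injective. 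An anti-automorphism fixing a basis is the identity, so commutativity follows with no computation.
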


\begin{proof}
It suffices to check that the basis elements from Proposition \ref{prop:A11 basis} commute with each other. Since $u$ is seen to be central by simple isotopies, and since copies of the diagram $z$ may be isotoped over other elements, it will suffice to check that $s$ and $\omega$ commute with each other.

To show $s$ and $\omega$ commute, we diagrammatically reduce $s\omega$ and $\omega s$ to the following linear combination of basis elements for $A_{1,1}^Y:$

\begin{equation*}
s\omega=\frac{1}{[2]}z \omega+\frac{2-[3]}{[2]} \omega=\omega s.
\end{equation*}
\end{proof}


\section{Threading and Transparency}\label{sec: threading and transparency}

In this section, we recall the notion of transparency in a skein module and define a family of polynomials $P_n$ with the goal of producing transparent elements in $\Sq^Y(M)$ when $q$ is a root of unity. In \cite{ESS_2025} a family of polynomials $Y_n$ was produced in order to find a presentation for the $\rm{PGL}_2$ skein algebra of the torus. It turns out that $P_n$ and $Y_n$ are closely related and in fact differ only by a constant: $Y_n=P_n-1.$ We will use this relationship when studying the Frobenius homomorphism for the $\rm{PGL}_2$ skein algebra of the torus in Section \ref{sec: Frobenius homomorphism for torus}.

The strategy of constructing transparent elements here is largely similar to the ones used in \cite{L_2015,MS17,BH24,BBHHMP25,HW26}, which focus on computations in the annulus. The feature specific to the $\rm{PGL}_2$ skein theory is the construction of a twist-invariant element $\chi-1 \in A_{1,1}^Y$ which plays a main role in our proof. 

\begin{definition}\label{defn:Pn-family}
We define the following sequence of polynomials $P_n\in \Z[x]$ as follows:
\begin{itemize}
    \item $P_0(x) = 3$
    \item $P_1(x) = x$
    \item $P_2(x) = x^2 - 2x$
    \item $P_n(x) = x(P_{n-1}(x) - P_{n-2}(x)) + P_{n-3}(x)$ for $n \geq 3.$
\end{itemize}
\end{definition}

These polynomials satisfy the property that for any $\lambda \in \C^{\times}$, $P_n(\lambda + 1 + \lambda^{-1}) = \lambda^n + 1 + \lambda^{-n}$.
They also satisfy the relation $P_n(x^2 - 1) = T_n(x)^2 - 1$. These two properties can be proven by induction and may also be taken to be defining properties. From the first property, one may also deduce the identity

\begin{equation}\label{eq:P composition}
P_{n}(P_k(x))=P_{nk}(x).
\end{equation}

\bigskip

\begin{definition}
Suppose $K$ is a framed knot in $M$. Let $K^{[x^n]}$ be the link obtained by
drawing $n$ parallel copies of $K$. We say that $K^{[x^n]}$ is the knot $K$
threaded by the monomial $x^n$. Now if $p(x) = \sum_{i=0} ^m a_ix^i$ is a
polynomial, we say that $K^{[p(x)]} = \sum_{i=0} ^m a_iK^{[x^i]}$ is
$K$ threaded by $p(x)$. Moreover, if $L = L_1 \sqcup \cdots 
\sqcup L_k$ is a link with components $L_1, \dots L_k$, then we define

\begin{equation}\label{eq:threading}
L^{[p(x)]} := \sum_{i_1, \dots, i_k} a_{i_1}\cdots a_{i_k}\coprod_{j=1} ^k L_j ^{[x^{i_j}]}.    
\end{equation}
\end{definition}

We define the following two algebra homomorphisms from $A_{0,0}^Y$ to $A_{1,1}^Y$. Given a diagram $D \in A_{0,0}^Y$, we define $D_*$ to be the diagram in $A_{1,1}^Y$ created by placing $D$ over the identity element $e \in A_{1,1}^Y.$ We define $D^*$ to be the diagram in $A_{1,1}^Y$ created by placing $D$ under $e$. The result of applying each map to the loop $z \in A_{0,0}^Y$ is shown below.

\[ z^* = {\zupper}, \quad z_* = {\zlower} \]

\begin{definition}
We say that a polynomial $P \in \C[x]$ is transparent if it satisfies the property $P(z)_*=P(z)^*$ or, equivalently, $P(z_*)=P(z^*),$ since each homomorphism is an algebra homomorphism.
\end{definition}

If $P$ is a transparent polynomial, then the element of $\Y(M)$ obtained by replacing a framed knot $K$ in $\Y(M)$ by its threading $K^{[P(x)]}$ can pass through any other element in the skein module. If $M=\Sigma \times (0,1)$ is a thickened surface then an element threaded by a transparent polynomial $P$ becomes a central element of the skein algebra $\Y(\Sigma).$

We will soon obtain an identity specific to our polynomial $P_n$ which will be used to show that $P_n$ is a transparent polynomial when $q$ is an appropriate root of unity.

To aid in our computations we will examine the expressions of $z^*$ and $z_*$ using the basis of $A_{1,1}^Y.$ Using the crossing removal skein relation from \eqref{eq:skein3} we can reduce $z^*$ and $z_*$ into basis elements of $A_{1,1}^Y$:
\begin{align*}
z^*&=q^2u+q^{-2}u^{-1}+([2]s-u)=q^2u+q^{-2}u^{-1}+\chi \\
z_*&=q^{-2}u+q^2u^{-1}+([2]s-u)=q^{-2}u+q^2u^{-1}+\chi,
\end{align*}
where we have defined $\chi=[2]s-u.$

The following proposition shows that the element $\chi-1$ is invariant under multiplication by $u$ (which one can interpret as being invariant under Dehn-twists).

\begin{proposition}
The following equality holds in $A_{1,1}^Y:$
\begin{equation}\label{eq:twist invariance}
u(\chi-1)=\chi-1.
\end{equation}
\end{proposition}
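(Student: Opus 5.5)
Since $\chi=[2]s-u$, the claim $u(\chi-1)=\chi-1$ unpacks to
\[
[2]\,su-u^2-u=[2]\,s-u-1,
\]
that is, $[2](su-s)=u^2-1$. This is precisely the relation \eqref{eq:su}, $su=s+\frac{1}{[2]}(u^2-1)$, obtained in the proof of Proposition \ref{prop:A11 basis}. So the proof reduces to expanding $u(\chi-1)$ with the multiplication of $A_{1,1}^Y$ and substituting \eqref{eq:su}. Because $u$ is central in $A_{1,1}^Y$ (as noted in the commutativity corollary), the order of the factors causes no trouble:
\[
u(\chi-1)=[2]\,su-u^2-u=[2]s+(u^2-1)-u^2-u=[2]s-u-1=\chi-1.
\]

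The one step that needs real justification is \eqref{eq:su} itself, since the paper records it only as a direct computation. I would verify it through the map $\varphi:A_{1,1}^Y\to A_{2,2}$, which the earlier corollary shows is injective and which is an algebra homomorphism because it respects concatenation of annular diagrams. It therefore suffices to check
\[
\varphi(s)\varphi(u)=\varphi(s)+\tfrac{1}{[2]}\bigl(\varphi(u)^2-1\bigr)
\]
in $A_{2,2}$. For this I would use the recorded formulas for $\varphi(u^k)$ and $\varphi(s)$, where $\varphi(u)=u+\frac{1}{[2]}f_4$ and $u$ on the right is the full twist of the two strands, together with the multiplication rules for $u$, $u^{1/2}$ and the $f_i$ in the Temperley--Lieb annular category. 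Examples of these rules: $f_4$ absorbs twists; $f_4^2=f_4\,(z^2\ \text{terms})$, up to the loop-value normalization; and $f_1u$, $f_2u$, $f_3u$ are rewritten using Kauffman-bracket resolutions of the crossings created when a cap or cup is twisted around the hole.

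The alternative route is diagrammatic. Stack $s$ with a full twist of the single $\mathrm{PGL}_2$ strand. Pull the twist through the vertex region of $s$, which creates crossings between the strand and an edge of $s$. Resolve those crossings with the $\mathrm{PGL}_2$ crossing relation from \eqref{eq:skein3}, and reduce the resulting bigons and monogons with I-H moves. The outcome is $s$ plus terms without vertices, which are multiples of $u^2$ and $1$.

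The main obstacle is this verification of \eqref{eq:su}: tracking the coefficients (powers of $q$ and factors of $1/[2]$) correctly, either through the $f_i$-multiplication table in $A_{2,2}$ or through the crossing resolutions in the diagrammatic route. Once \eqref{eq:su} holds, the proposition is the one-line algebraic identity above.
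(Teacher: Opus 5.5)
Your proof is correct and is the paper's argument: expand $u(\chi-1)$ using $\chi=[2]s-u$, commute $u$ past $s$, and substitute \eqref{eq:su}, which the paper likewise takes as already established by direct computation. In your optional check of \eqref{eq:su} through $\varphi$, note that $\varphi$ is non-unital, so the $1$ on the right must be read as $\varphi(1)=e+\frac{1}{[2]}f_4$ rather than $e$; also $f_4^2=-[2]f_4$, since the glued cup and cap form a contractible loop and no core loops appear.
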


\begin{proof}

We observe
\begin{align*}
u\chi&=u([2]s-u)\\
&=[2]su-u^2\\
&=[2](s+\frac{1}{[2]}(u^2-1))-u^2\\
&=[2]s-1\\
&=\chi-1+u,
\end{align*}
where the third equality comes from applying \eqref{eq:su}. From this, we deduce that
\begin{equation}
u(\chi-1)=\chi-1.
\end{equation}

\end{proof}

\begin{lemma}\label{lem:Pn Rn}
For any $n \geq 0,$ we have that
\begin{align*}
P_n(z^*)&=q^{2n}u^n+q^{-2n}u^{-n}+R_n(\chi-1)\\
P_n(z_*)&=q^{-2n}u^n+q^{2n}u^{-n}+R_n(\chi-1),
\end{align*}
where $R_n$ is a degree $n$ polynomial with constant coefficient equal to 1.
\end{lemma}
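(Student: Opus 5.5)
Write $y=\chi-1$. Then $z^*=a+b+1+y$ with $a=q^2u$ and $b=q^{-2}u^{-1}$, and $z_*$ has the same form with $a=q^{-2}u$, $b=q^2u^{-1}$. The structural facts we use are these. By the corollary that $A_{1,1}^Y$ is commutative, $a$, $b$ and $y$ all commute. We have $ab=1$. By the twist-invariance identity \eqref{eq:twist invariance}, $uy=y$, hence also $u^{-1}y=y$. Consequently $ay=q^{\pm2}y$ and $by=q^{\mp2}y$, so $(a+b)y=[2]_{q^2}\,y=(q^2+q^{-2})y$ in both cases. The whole computation therefore lives in a commutative algebra in which $y$ absorbs $u$. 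The point is that this makes $R_n$ independent of whether we use $z^*$ or $z_*$, because the two cases differ only by $q\leftrightarrow q^{-1}$ and $q^2+q^{-2}$ is symmetric.

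The proof is by strong induction on $n$, using the recursion $P_n(x)=x(P_{n-1}(x)-P_{n-2}(x))+P_{n-3}(x)$ for $n\ge 3$. We check $n=0,1,2$ directly:
\begin{itemize}
\item $P_0=3=a^0+b^0+R_0(y)$ with $R_0=1$.
\item $P_1(z^*)=a+b+1+y$, so $R_1(y)=1+y$.
\item For $n=2$, expand $(a+b+1+y)^2-2(a+b+1+y)$ using $ab=1$ and $(a+b)y=(q^2+q^{-2})y$. This gives $a^2+b^2+1+(2q^2+2q^{-2})y+y^2$, so $R_2(y)=1+2(q^2+q^{-2})y+y^2$, which has degree $2$ and constant term $1$.
\end{itemize}

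For the inductive step, put $c=q^2+q^{-2}$ and substitute $P_k(z^*)=a^k+b^k+R_k(y)$ for $k=n-1,n-2,n-3$ into the recursion. We multiply $x=a+b+1+y$ against $a^{n-1}+b^{n-1}-a^{n-2}-b^{n-2}$ and against $R_{n-1}(y)-R_{n-2}(y)$, and sort the result into two kinds of terms.

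\emph{Terms without $y$.} Here $y=0$ and the expression reduces to the scalar identity $P_n(\lambda+1+\lambda^{-1})=\lambda^n+1+\lambda^{-n}$ with $\lambda=a$. More precisely, the polynomial identity in $a,b$ with $ab=1$ gives $a^n+b^n$ plus constant terms. Those constants sum to $1$, as one sees by comparing with the scalar recursion for $\lambda^n+1+\lambda^{-n}$.

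\emph{Terms involving $y$.} These come from $y\cdot(a^{n-1}+b^{n-1}-a^{n-2}-b^{n-2})$ and from $(a+b+1+y)(R_{n-1}-R_{n-2})$, where every positive power of $y$ absorbs $a$ and $b$. Write $R_k(y)=1+yS_k(y)$. Using $a^ky=q^{2k}y$ and $b^ky=q^{-2k}y$, the first product becomes $([n-1]'-[n-2]')y$ with $[k]':=q^{2k}+q^{-2k}$. In the second product, $(a+b)(R_{n-1}-R_{n-2})=(a+b)(R_{n-1}(0)-R_{n-2}(0))+c\,y(S_{n-1}-S_{n-2})$, and the first piece vanishes because both constant terms equal $1$.

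Putting this together yields a recursion
\[
R_n(y)=1+y\bigl([n-1]'-[n-2]'+(c+1+y)(S_{n-1}(y)-S_{n-2}(y))+S_{n-3}(y)\bigr),
\]
which is manifestly the same for $z_*$, since swapping $q\leftrightarrow q^{-1}$ leaves $[k]'$ and $c$ unchanged. The constant term of $R_n$ is visibly $1$.

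For the degree, $S_k$ has degree $k-1$ with leading coefficient $1$; this holds in the base cases, where $R_1$ and $R_2$ are monic. The term $y\cdot y\cdot S_{n-1}$ then has degree $n$ with coefficient $1$. No other term reaches degree $n$, because $S_{n-2}$ contributes only up to degree $n-1$. Hence $\deg R_n=n$ and $R_n$ is monic.

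The main obstacle is the bookkeeping in the inductive step. We must show that the $y$-free part reproduces exactly $a^n+b^n+1$, with the leftover constants from $R_{n-1}-R_{n-2}$ and $R_{n-3}$ giving $+1$. We must also verify that all mixed terms collapse via $uy=y$ into a single polynomial in $y$. I would handle the first point by setting $y=0$, reducing it to the known scalar identity for $P_n$.
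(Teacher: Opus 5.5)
Your proposal is correct and takes essentially the same route as the paper. Both argue by induction through the recursion $P_n=x(P_{n-1}-P_{n-2})+P_{n-3}$ with base cases $n=0,1,2$, collapse the mixed terms using $u(\chi-1)=\chi-1$, and arrive at the same recursion: your formula for $R_n=1+yS_n$ is just the paper's $R_n=(q^2+q^{-2}+y+1)(R_{n-1}-R_{n-2})+(q^{2n-2}+q^{-2n+2}-q^{2n-4}-q^{-2n+4})y+R_{n-3}$ rewritten, and you additionally spell out the monic degree-$n$ claim that the paper leaves implicit.
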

\begin{proof}
We will prove the statement by induction on $n,$ making use of the recursive formula for $P_n.$ We first check manually that the property holds for the three base cases: $n=0,1,2.$ We have that $P_0(z^*)=3=q^0u^0+q^{-0}u^{-0}+1,$ and $P_0(z_*)=3=q^{-0}u^0+q^0u^{-0}+1$  so the property holds for $n=0$ with $R_0=1.$

As computed above, $P_1(z^*)=z^*=q^{2}u+q^{-2}u^{-1}+([2]s-u)=q^{2}u+q^{-2}u^{-1}+\chi.$ We have $P_1(z_*)$ is the same expression but with $q$ replaced by $q^{-1}.$ Thus, the property holds for $n=1$ with $R_1(x)=x+1.$

Next, we have
\begin{align*}
P_2(z^*)=(z^*)^2-2(z^*)&=(q^2u+q^{-2}u^{-1}+\chi)^2-2(q^2u+q^{-2}u^{-1}+\chi)\\
&=q^4u^2+q^{-4}u^{-2}+(\chi-1)^2+2(q^2+q^{-2})(\chi-1)+1.
\end{align*}

We have that $P_2(z_*)$ is the same expression with $q$ replaced with $q^{-1}$ and see that the property holds for $R_2(x-1)=(x-1)^2+2(q^{2}+q^{-2})(x-1)+1.$

Now for $n \geq 3$ we recursively define $R_n(x)$ by the following.
\[R_n(x) = (q^2 + q^{-2} + x + 1) (R_{n-1}(x) - R_{n-2}(x)) + (q^{2n - 2} + q^{-2n + 2} - q^{2n - 4} - q^{-2n + 4})x + R_{n-3}(x) \]

We note that $R_n(x)$ is invariant under replacing $q$ by $q^{-1}.$ We also note it is evident by induction that the constant term of $R_n(x)$ is $1$ for all $n \geq 0.$ Using this last property as well as \eqref{eq:twist invariance}, we obtain 
\begin{equation}\label{eq:uRn}
uR_n(\chi-1)=R_n(\chi-1)+u-1.
\end{equation}

We now verify that $P_n(z^*)$ satisfies the desired property by using the recursive definition of $P_n(x)$ and the inductive assumption.

We have that
\begin{align*}
    P_n(z^*) &= z^* (P_{n-1}(z^*) - P_{n-2}(z^*)) + P_{n-3}(z^*) \\
    &= (q^2 u + q^{-2} u^{-1} + \chi) (q^{2n - 2} u^{n-1} + q^{-2n + 2} u^{-n + 1} + R_{n-1}(\chi - 1) \\ & \quad - q^{2n - 4} u^{n-2} - q^{-2n + 4} u^{-n + 2} - R_{n-2}(\chi - 1)) \\ & \quad + q^{2n - 6} u^{n-3} + q^{-2n + 6} u^{-n + 3} + R_{n-3}(\chi - 1) \\ 
    &= q^{2n} u^n + q^{-2n} u^{-n} + (q^2 + q^{-2} + \chi) (R_{n-1}(\chi - 1) - R_{n-2}(\chi - 1)) \\ & \quad + (q^{2n - 2} + q^{-2n + 2} - q^{2n - 4} - q^{-2n + 4})(\chi - 1) + R_{n-3}(\chi - 1)\\
    &=q^{2n}u^n+q^{-2n}u^{-n}+R_{n}(\chi-1),
\end{align*}
as required. Here the first equality is given by the recursive definition of $P_n,$ the second equality is given by the inductive assumption, the third equality is deduced by the property in equation \eqref{eq:uRn}, and the final equality is given by the recursive definition of $R_n.$

The expression for $P_n(z_*)$ is the same but with $q$ replaced by $q^{-1}.$ Since $R_n(\chi-1)$
is invariant under this replacement, we have obtained the required identities.

\end{proof}

\begin{proposition}\thlabel{P_n_property}
$P_n(z^*) - P_n(z_*) = (q^{2n} - q^{-2n})(u^n - u^{-n})$
\end{proposition}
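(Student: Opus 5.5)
This is a direct consequence of Lemma \ref{lem:Pn Rn}. We subtract the two expressions given there, so the plan is essentially a one-line computation.

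First, Lemma \ref{lem:Pn Rn} gives, in $A_{1,1}^Y$,
\begin{align*}
P_n(z^*)&=q^{2n}u^n+q^{-2n}u^{-n}+R_n(\chi-1),\\
P_n(z_*)&=q^{-2n}u^n+q^{2n}u^{-n}+R_n(\chi-1).
\end{align*}
The key point is that the same polynomial $R_n$ appears in both lines. This holds because $R_n$ was shown to be invariant under $q\mapsto q^{-1}$. Moreover, $\chi=[2]s-u$ is itself invariant, since $[2]$ is symmetric in $q$ and $s,u$ are diagrams that do not depend on $q$.

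Subtracting, the $R_n(\chi-1)$ terms cancel and we are left with
\[
(q^{2n}-q^{-2n})u^n+(q^{-2n}-q^{2n})u^{-n}=(q^{2n}-q^{-2n})(u^n-u^{-n}),
\]
which is the claimed identity. For $n=0$ both sides vanish, which is consistent with $P_0=3$.

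The only point needing care is the $q\leftrightarrow q^{-1}$ symmetry of $R_n$, which is the obstacle if any. I would double-check it against the recursive definition of $R_n$ in the proof of Lemma \ref{lem:Pn Rn}. The coefficient $q^2+q^{-2}$ and the term $(q^{2n-2}+q^{-2n+2}-q^{2n-4}-q^{-2n+4})$ are symmetric under $q\mapsto q^{-1}$, and so are the base cases $R_0=1$, $R_1(x)=x+1$, and $R_2(x)=x^2+2(q^2+q^{-2})x+1$. Symmetry of all $R_n$ then follows by induction, and the proposition follows.
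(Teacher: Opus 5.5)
Your proposal is correct and takes the same route as the paper: subtract the two identities of Lemma \ref{lem:Pn Rn}, and the $R_n(\chi-1)$ terms cancel. Your check that $R_n$ and $\chi$ are invariant under $q\mapsto q^{-1}$ only spells out what the paper already notes in the proof of that lemma.
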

\begin{proof}
We apply Lemma \ref{lem:Pn Rn} to subtract $P_n(z_*)$ from $P_n(z^*).$ The terms involving $R_n(\chi-1)$ cancel and we are left with the desired identity.
\end{proof}

\begin{theorem}\label{Thm:transparent}
A non-constant polynomial $p(x) \in \C[x]$ is transparent when the quantum parameter is set to $q=\xi$ if and only if

\begin{enumerate}
   \item $\xi$ is a root of unity, and
   \item $p \in \C[P_N(x)]$, where $N$ is the order of $\xi^4$.
\end{enumerate}
\end{theorem}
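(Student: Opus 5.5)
Since $z\mapsto z_*$ and $z\mapsto z^*$ are algebra homomorphisms into the commutative algebra $A_{1,1}^Y$, the set of transparent polynomials is a subalgebra $\mathcal{T}\subset\C[x]$. The plan is to compare $\mathcal{T}$ with $\C[P_N]$ using the basis of Proposition \ref{prop:A11 basis} and the formula of Lemma \ref{lem:Pn Rn}.

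\emph{Sufficiency.} Suppose $\xi$ is a root of unity and $N$ is the order of $\xi^4$. Then $\xi^{2N}-\xi^{-2N}=\xi^{-2N}(\xi^{4N}-1)=0$, so Proposition \ref{P_n_property} gives $P_N(z^*)=P_N(z_*)$. Hence $P_N\in\mathcal{T}$. Because $\mathcal{T}$ is a subalgebra, it contains all of $\C[P_N]$.

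\emph{Necessity.} Let $p$ be a non-constant transparent polynomial of degree $d$ with leading coefficient $c$, and write $p=\sum_{k} a_k x^k$.
\begin{enumerate}
\item Compute the top term in $u$. By Lemma \ref{lem:Pn Rn}, for every $m$ we have $P_m(z^*)=\xi^{2m}u^m+\xi^{-2m}u^{-m}+R_m(\chi-1)$, and $P_m$ is monic of degree $m$. The monomials $x^k$ can therefore be rewritten in the $P_m$-basis triangularly, and expanding $p(z^*)$ in the basis $\{u^k,s,\omega z^j\}$ gives $u^d$-coefficient $c\,\xi^{2d}$.
\item Check that no other term contributes to $u^d$. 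The terms $R_m(\chi-1)$ lie in the span of $1,\chi-1,(\chi-1)^2,\dots$. Equation \eqref{eq:twist invariance} gives $u(\chi-1)=\chi-1$; I expect the same relation to force $(\chi-1)^k$ into the span of $\{s,\omega z^j,u^0,u^{\pm1}\}$, so these terms do not affect $u^d$ for $d\ge2$. This needs verifying, either from the commutative-algebra relations already established or by applying $\varphi$ to $A_{2,2}$. The case $d=1$ is handled directly: $z^*-z_*=(\xi^2-\xi^{-2})(u-u^{-1})$, which is nonzero because $\xi^4\neq1$ when $d=1$ is not a multiple of $N$.
\item Conclude. For $p(z_*)$ the $u^d$-coefficient is $c\,\xi^{-2d}$. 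Transparency forces $\xi^{4d}=1$, so $\xi$ is a root of unity and $N\mid d$.
\item Reduce the degree. Set $p'=p-c\,P_N^{d/N}$. It is transparent and has lower degree, so by induction on degree $p'\in\C[P_N]$ (the base case being the constants). Hence $p\in\C[P_N]$.
\end{enumerate}

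\emph{Main obstacle.} The hard step is the claim in item 2: that the powers $(\chi-1)^k$ never contribute to the coefficient of $u^{d}$ for large $d$. I would try to obtain this from $u(\chi-1)=\chi-1$. Since $u$ is invertible, this gives $u^{j}(\chi-1)=\chi-1$ for every $j$, which shows that the $(\chi-1)$-part is killed by $u-1$. I would then use the basis of Proposition \ref{prop:A11 basis}, applying $\varphi$ into $A_{2,2}$ if needed, to see that an element annihilated by $u-1$ has no component along $u^{d}$ with $d\ge 2$. If this argument fails, the alternative is to compare leading terms after pushing $p(z^*)$ and $p(z_*)$ through $\varphi$ into $A_{2,2}$, where the $u^{k/2}$ basis elements are easy to track.
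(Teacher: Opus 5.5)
Your outline is sound, but the step you flag as the main obstacle comes only from treating $p(z^*)$ and $p(z_*)$ separately. Lemma~\ref{lem:Pn Rn} gives the \emph{same} remainder $R_m(\chi-1)$ in $P_m(z^*)$ and $P_m(z_*)$. So if you write $p=\sum_m c_mP_m$ and subtract, \thref{P_n_property} gives
\[
p(z^*)-p(z_*)=\sum_{m\ge 1}c_m(\xi^{2m}-\xi^{-2m})(u^m-u^{-m}),
\]
with no $\chi$-terms at all. This is exactly what you already do for sufficiency and for $d=1$. Linear independence of $\{u^m\}$ then gives $c_m(\xi^{2m}-\xi^{-2m})=0$ for every $m$ at once. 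Since $p$ is non-constant, some $c_m\neq 0$ with $m\ge1$, so $\xi$ is a root of unity, and $c_m=0$ unless $N\mid m$. Finally, $P_{kN}=P_k\circ P_N$ from \eqref{eq:P composition} puts $p$ in $\C[P_N]$. That is the paper's proof. It needs no leading-term analysis and no induction on degree; your induction via $p-cP_N^{d/N}$ only avoids \eqref{eq:P composition}, which is already available.

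If you prefer to keep your route, the claim in item 2 is true and your sketch closes it. You need one fact you did not state: $u\,\omega z^j=\omega z^j$, which follows by sliding the vertex of the outer $\omega'$ once around its loop. Together with \eqref{eq:su}, any element $X=\sum_m a_mu^m+bs+\sum_j c_j\omega z^j$ with $uX=X$ must satisfy $a_m-a_{m-1}=\frac{b}{[2]}(\delta_{m,2}-\delta_{m,0})$ for all $m$. Finite support then forces $a_m=0$ for $m\notin\{0,1\}$. Since $u(\chi-1)^k=(\chi-1)^k$ for $k\ge1$, the remainders never contribute to $u^d$ with $d\ge2$. This works, but the difference formula makes it unnecessary.
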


\begin{proof}
Suppose (1) and (2) hold. Then we apply \thref{P_n_property}. Since $\xi^{4N} = 1$, we have that
\[P_N(z^*) - P_N(z_*) = (\xi^{2N} - \xi^{-2N})(u^N - u^{-N}) = 0.\]
Thus, $P_N$ is transparent. Since the homomorphisms $(-)^*$ and $(-)_*$ are algebra homomorphisms, we then have that every element of $\C[P_N(x)]$ is transparent as well.

Now, suppose that $p$ is a transparent polynomial which is not constant. Then $p \in \C[x]$ satisfies $p(z^*) - p(z_*) = 0$. Note that the polynomials $P_i(x)$ form a basis for $\C[x]$ since $P_i$ is a degree $i$ polynomial for each $i \geq 0$. Thus, we can uniquely represent $p$ as $p(x) = \sum_{i=0}^{m} c_i P_i(x)$. By \thref{P_n_property} we have:
\begin{align*}
    p(z^*) - p(z_*) = \sum_{i=0}^{m} c_i (P_i(z^*) - P_i(z_*)) = \sum_{i=1}^{m} c_i (\xi^{2i} - \xi^{-2i})(u^i -u^{-i}) = 0
\end{align*}
Recall that the set $\{u^i: i \in \Z\}$ is linearly independent in $A_{1,1}^Y$ by Proposition \ref{prop:A11 basis}. Thus, $c_i (\xi^{2i} - \xi^{-2i}) = 0$ for each $i$. 

If $\xi$ is not a root of unity, then $\xi^{2i} \ne \xi^{-2i}$ for any $i\geq 1$. In this case, each $c_i = 0$ for $i \geq 1$ and $p(x) = c_0P_0(x)=3c_0$, contradicting the fact that $p$ is non-constant. Therefore, $\xi$ is a root of unity. Let $N$ denote the order of $\xi^4$. Then, $\xi^{2i} = \xi^{-2i}$ if and only if $N$ divides $i$. Thus, $c_i = 0$ for all $i$ not divisible by $N$ and $p$ is of the form
\[p(x) = \sum_{k=0}^{\frac{m}{N}} c_{kN} P_{kN}(x) = \sum_{k=0}^{\frac{m}{N}} c_{kN} P_{k}(P_N(x)) \in \C[P_N(x)] ,\]
where the last equality comes from the property \eqref{eq:P composition}.
\end{proof}

\section{Frobenius homomorphism for the torus}\label{sec: Frobenius homomorphism for torus}

In \cite{Bonahon_2015}, Bonahon and Wong constructed a Frobenius homomorphism

\begin{equation*}
Fr: \mathcal{S}_t(\Sigma) \rightarrow \mathcal{S}_q(\Sigma)
\end{equation*}
for any oriented surface $\Sigma$ when $q^2$ is a root of unity of order $n$ and $t=q^{n^2}.$ Their map is defined on framed links by the assignment $L \mapsto L^{[T_n]}.$  In this section we construct the Frobenius homomorphism for the closed torus in the $\rm{PGL}_2$ case, \begin{equation*}
Fr^{Y}:\mathcal{S}_t^Y(\T^2)\rightarrow \Y(\T^2) 
\end{equation*}when $q^2$ is a primitive $n$th root of unity and $t=q^{n^2}.$ Since in \cite{ESS_2025} it was shown that the map from the $\rm{PGL}_2$ skein algebra of the torus to the Kauffman bracket skein algebra of the torus is not injective, we will have to use some more ingredients from the $\rm{PGL}_2$ skein theory.  We will define $Fr^Y$ on a basis for $\mathcal{S}_t^Y(\T^2)$ and check that it is an algebra homomorphism by using the algebra presentation for $\Y(\T^2)$ from \cite{ESS_2025}.

We provide some useful information about the structure of $\mathcal{S}_q^Y(\T^2)$ from \cite{ESS_2025}.

\begin{definition}

\begin{itemize}
    \item[(a)]  Let $(k,l) \in \Z^2 \backslash \{(0,0)\}$ be a pair of coprime integers. Define $(k,l)\in \Y(\T^2)$ to be the element represented by the $(k,l)$ torus knot, the knot on the torus that traverses the meridian $k$ times and the longitude $l$ times with a slope of $k/l$, pictured in Figure \ref{fig:hexagon}. Since we are using unoriented skeins in $\Y(\T^2),$, the element represented by $(k,l)$ is the same as the element represented by $(-k,-l).$
    \item [(b)] Define $w \coloneq [2]h -(1,0) - (0,1) - (1,1) + 3\varnothing \in \Y(\T^2)$, where $h$ is the web defined in Figure \ref{fig:hexagon}.
    \item[(c)] For $(k,l) \in \Z^2 \backslash \{(0,0)\}$, not necessarily coprime, define $(k,l)_Y\in \Y(\T^2)$ to be $\left(\frac{k}{g},\frac{l}{g}\right)^{[Y_g]}$, where $g=\gcd(k,l)$ and $Y_g = P_g -1,$ where $P_g$ is the polynomial from Definition \ref{defn:Pn-family} and the superscript of $[Y_g]$ denotes the threading operation defined in \eqref{eq:threading}. We will use the convention that $\gcd(0,0)=0$ and $(0,0)_Y=2 \varnothing.$
\end{itemize}

\end{definition}

The following two propositions were proven in \cite{ESS_2025} and hold for any value of $q$ such that $q+q^{-1} \neq 0.$

\begin{figure}[h!]
\centering
\resizebox{10cm}{!}{

\tikzset{every picture/.style={line width=0.75pt}} 

\begin{tikzpicture}[x=0.75pt,y=0.75pt,yscale=-1,xscale=1]

\draw   (109,81) -- (259,81) -- (259,231) -- (109,231) -- cycle ;
\draw    (169,81) -- (109,231) ;
\draw    (229,81) -- (169,231) ;
\draw    (259,155) -- (229,231) ;
\draw    (139,81) -- (109,156) ;
\draw    (200,81) -- (139,231) ;
\draw    (259,81) -- (198.5,231) ;
\draw    (79,165) -- (107.08,156.57) ;
\draw [shift={(109,156)}, rotate = 163.3] [color={rgb, 255:red, 0; green, 0; blue, 0 }  ][line width=0.75]    (10.93,-3.29) .. controls (6.95,-1.4) and (3.31,-0.3) .. (0,0) .. controls (3.31,0.3) and (6.95,1.4) .. (10.93,3.29)   ;
\draw    (79,165) -- (108.17,229.18) ;
\draw [shift={(109,231)}, rotate = 245.56] [color={rgb, 255:red, 0; green, 0; blue, 0 }  ][line width=0.75]    (10.93,-3.29) .. controls (6.95,-1.4) and (3.31,-0.3) .. (0,0) .. controls (3.31,0.3) and (6.95,1.4) .. (10.93,3.29)   ;
\draw    (188,58) -- (257.1,80.38) ;
\draw [shift={(259,81)}, rotate = 197.95] [color={rgb, 255:red, 0; green, 0; blue, 0 }  ][line width=0.75]    (10.93,-3.29) .. controls (6.95,-1.4) and (3.31,-0.3) .. (0,0) .. controls (3.31,0.3) and (6.95,1.4) .. (10.93,3.29)   ;
\draw    (188,58) -- (227.26,80.02) ;
\draw [shift={(229,81)}, rotate = 209.29] [color={rgb, 255:red, 0; green, 0; blue, 0 }  ][line width=0.75]    (10.93,-3.29) .. controls (6.95,-1.4) and (3.31,-0.3) .. (0,0) .. controls (3.31,0.3) and (6.95,1.4) .. (10.93,3.29)   ;
\draw    (188,58) -- (199.07,79.23) ;
\draw [shift={(200,81)}, rotate = 242.45] [color={rgb, 255:red, 0; green, 0; blue, 0 }  ][line width=0.75]    (10.93,-3.29) .. controls (6.95,-1.4) and (3.31,-0.3) .. (0,0) .. controls (3.31,0.3) and (6.95,1.4) .. (10.93,3.29)   ;
\draw    (188,58) -- (170.27,79.46) ;
\draw [shift={(169,81)}, rotate = 309.56] [color={rgb, 255:red, 0; green, 0; blue, 0 }  ][line width=0.75]    (10.93,-3.29) .. controls (6.95,-1.4) and (3.31,-0.3) .. (0,0) .. controls (3.31,0.3) and (6.95,1.4) .. (10.93,3.29)   ;
\draw    (188,58) -- (140.81,80.15) ;
\draw [shift={(139,81)}, rotate = 334.86] [color={rgb, 255:red, 0; green, 0; blue, 0 }  ][line width=0.75]    (10.93,-3.29) .. controls (6.95,-1.4) and (3.31,-0.3) .. (0,0) .. controls (3.31,0.3) and (6.95,1.4) .. (10.93,3.29)   ;
\draw   (289.5,79) -- (440,79) -- (440,229.5) -- (289.5,229.5) -- cycle ;
\draw    (289.5,154.25) -- (440,154.25) ;
\draw    (364.75,79) -- (333.15,154.25) ;
\draw    (393.35,154.25) -- (363.25,229.5) ;

\draw (161,37) node [anchor=north west][inner sep=0.75pt]   [align=left] {{\fontfamily{ptm}\selectfont $k$ times}};
\draw (30,156) node [anchor=north west][inner sep=0.75pt]   [align=left] {{\fontfamily{ptm}\selectfont $l$ times}};

\end{tikzpicture}
}
\caption{A $(k,l)$ knot on the left and the hexagon $h$ on the right.}
\label{fig:hexagon}
\end{figure}

The first gives a description of the basis of $\Y(\T^2)$ which is similar to the Chebyshev basis of the torus given by Frohman and Gelca \cite{FG00}.

\begin{proposition}[\cite{ESS_2025}]\label{thm:torus basis} 
$\{(p,q)_Y, w, \varnothing: (p,q) \in \Z^2\backslash \{(0,0)\}\}$ forms a basis of $\mathcal{S}_q^Y(\T^2)$.
\end{proposition}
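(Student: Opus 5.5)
We need to prove that $\{(p,q)_Y, w, \varnothing\}$ spans $\Y(\T^2)$ and is linearly independent. The plan has two halves: a diagrammatic reduction for spanning, and an injectivity argument that avoids $\varphi$ alone (since $\varphi$ is not injective on the torus).

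\textbf{Spanning.} I would argue as in Lemma \thref{A_1_0_lemma} and Proposition \ref{prop:A11 basis}. First resolve all crossings with \eqref{I-H}, which leaves crossingless trivalent graphs on $\T^2$.

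Any component contained in a disk evaluates to a scalar. For a non-contractible connected component with vertices, choose a spanning tree and cut along its neighborhood. This gives a trivalent tree $B$ in a disk together with a planar matching $D$ on the complement, which is a once-punctured torus with a disk removed. Whenever $D$ has a face that is a disk, I would use I-H moves on $B$ (all trees with the same number of leaves are I-H equivalent) to make that face a monogon, bigon, triangle or square. Such a face then reduces to webs with fewer vertices.

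By an Euler characteristic count, a crossingless trivalent web on $\T^2$ with no faces of at most four sides must have all faces hexagons. Such webs are the hexagonal webs obtained from $h$ by applying the action of $SL_2(\Z)$, possibly with several parallel copies. I then need to show these reduce to $h$, torus curves, and $\varnothing$. I would do this by resolving one edge with the $\varphi$-type relation to produce terms with fewer vertices, together with the vertexless webs, which are unions of parallel $(k,l)$ curves.

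Products of parallel $(a,b)$ curves are polynomials in $(a,b)$, and these lie in the span of the threaded elements $(a,b)^{[Y_g]}$ together with $\varnothing$, since the $Y_g = P_g-1$ have degree $g$. Finally, the $SL_2(\Z)$-images of $h$ should differ from $h$ by combinations of curves, which recovers $w$. I expect the main obstacle to be exactly this step: showing that every hexagonal web, including multiple parallel copies, reduces to $h$ plus curves.

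\textbf{Linear independence.} Compose $\varphi$ with the Frohman--Gelca basis of $\Sq(\T^2)$. Using $P_g(x^2-1)=T_g(x)^2-1$, we get $\varphi((k,l)_Y) = (k,l)_T^{2}-2$-type expressions, namely $(2k,2l)_T$ plus lower terms in the product-to-sum formula. Distinct $\pm(k,l)$ then have distinct leading terms, so these images are independent of one another and of $\varnothing$.

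The kernel of $\varphi$ is nonzero, so $w$ must be handled separately. I expect $\varphi(w)$ to lie in the span of the other images; $w$ is chosen precisely to be the kernel-type element. In that case I would detect $w$ with a different functional, such as an $SO_3$-type evaluation or a trace sensitive to hexagons, or a $\Z/2$ homology grading: webs carry classes in $H_1(\T^2;\Z/2)$, $h$ sits in a nontrivial structure, and curves $(k,l)_Y$ with $g$ even sit in class $0$.

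Concretely, I would construct a linear functional on $\Y(\T^2)$ that vanishes on all $(k,l)_Y$ and on $\varnothing$ but not on $w$. One candidate is the Witten--Reshetikhin--Turaev $SO_3$ TQFT pairing at a generic value. This is the second hard point of the proof.
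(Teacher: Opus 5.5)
The paper does not prove this proposition; it quotes it from \cite{ESS_2025}. So there is no internal argument to compare with. Judged on its own, your proposal has genuine gaps, located exactly at the two steps you flag as hard.

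The routine parts are fine:
\begin{itemize}
\item crossing removal;
\item the spanning-tree/I-H strategy;
\item writing multicurves in terms of the $Y_g$;
\item independence of $\{(k,l)_Y,\varnothing\}$ via $\varphi$. In fact $Y_g(x^2-1)=T_g(x)^2-2$ gives $\varphi((k,l)_Y)=(2k,2l)_T$ exactly.
\end{itemize}

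\textbf{Spanning.} Your claim that every disk face of $D$ can be shrunk by changing $B$ fails precisely at $h$. Take the spanning tree to be one edge of $h$. The other two edges are non-parallel essential arcs in the one-holed torus, and their complement is a single disk meeting each arc from both sides. Both trivalent trees $B'$ on four leaves then give back a hexagon.

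The Euler characteristic classification is also wrong:
\begin{itemize}
\item the count permits mixtures of pentagons and heptagons, and faces that are not disks;
\item larger honeycomb webs are actually reducible, since three pairwise non-parallel arcs cut off a hexagonal region whose three corners can be made cherries of $B'$, giving a triangle;
\item ``parallel copies'' of a hexagonal theta cannot occur, because its complement is a disk.
\end{itemize}
Correct bookkeeping on the arcs of $D$ goes as follows: an innermost inessential arc gives a monogon, two parallel essential arcs give a bigon, and three non-parallel ones give a triangle. This shows that $\Y(\T^2)$ is spanned by $\varnothing$, multicurves, and an infinite family of two-vertex thetas $h_{a,b}$, one for each triple of slopes $\{a,b,a+b\}$ with $(a,b)$ a basis of $\Z^2$.

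The missing step is to put every $h_{a,b}$ in the span of $h$, curves and $\varnothing$. Your proposed tool, a ``$\varphi$-type relation'', does not exist in $\Y(\T^2)$. Since $\varphi$ is not injective on the torus, relations in $\Sq(\T^2)$ cannot be pulled back, and no $\mathrm{PGL}_2$ relation lowers the vertex count of a hexagonal theta without creating crossings. By the crossing relation, $[2]h_{a,b}$ equals $a\cdot b$ minus multiples of the two smoothings $a\pm b$. So this step is equivalent to the case $|d|=1$ of the product-to-sum rule in Proposition \ref{thm:multiplication table}. That is a genuine computation, and it is where the coefficient of $w$ comes from; the proposal does not carry it out.

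\textbf{Independence of $w$.} Expanding the projectors gives
\[
\varphi(h)=-[2]-\tfrac{2}{[2]}+\tfrac{1}{[2]}(\gamma_1^2+\gamma_2^2+\gamma_3^2),
\]
where $\gamma_i$ are the Kauffman bracket curves of slopes $(1,0),(0,1),(1,1)$. Hence $\varphi(w)=(4-[2]^2)\varnothing$ for $w$ as defined in (b), and $\varphi$ can never separate $w$ from $\varnothing$. Independence of the whole set is therefore equivalent to one assertion: the element $[2]h-(1,0)-(0,1)-(1,1)+[3]\varnothing$ of $\ker\varphi$ is nonzero. That is exactly the non-injectivity of $\varphi$ proved in \cite{ESS_2025}, and nothing in your setup yields it.

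Neither of your proposed detectors supplies this.
\begin{itemize}
\item Witten--Reshetikhin--Turaev $SO_3$ TQFTs exist only at roots of unity, not at ``generic'' $q$. They cannot cover all $q$ with $[2]\neq0$, and you would still have to compute the pairing against $h$.
\item There is no $H_1(\T^2;\Z/2)$-grading of the kind you describe. Trivalent webs are not mod $2$ cycles. If curves carried their mod $2$ classes, the relation $(1,0)(0,1)=[2]h+q^{2}(-1,1)+(q^{-2}-1)(1,1)$ would force $h\neq0$ into class $(1,1)$. That contradicts the order-three symmetry of $h$, which cyclically permutes its three cycles.
\item Under such a grading, $(k,l)_Y=(k/g,l/g)^{[Y_g]}$ would not even be homogeneous, so ``$(k,l)_Y$ with $g$ even sit in class $0$'' is false.
\end{itemize}
What is needed is an explicit linear functional or representation of $\Y(\T^2)$, checked against all the relations for every admissible $q$, that does not vanish on this element. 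Heuristically, at $q=1$ the element is a multiple of the idempotent of the isolated non-liftable point of the $\mathrm{PGL}_2$-character variety, which is why $\varphi$ cannot see it. Without such a detector, the linear independence half of the proposal is unproved.
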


The next proposition provides for us the following analogue of the product-to-sum description of the skein algebra of the torus that Frohman and Gelca discovered for the Kauffman bracket case \cite{FG00}.

\begin{proposition}[\cite{ESS_2025}]\label{thm:multiplication table}
For any $k,l,r,s \in \mathbb{Z}$, the product of basis elements of $\Y(\T^2)$ is determined by the following relations:
	\[\begin{split}
	(k,l)_{Y} \cdot (r,s)_{Y} & = q^{2d}(k + r, l + s)_{Y} + q^{-2d}(k - r, l - s)_{Y} + \alpha w,\\
	(k,l)_{Y} \cdot w & = w \cdot (k,l)_{Y} = (-1)^{g_{1}}2w,\\
	w^{2} & = 2[2]^{2}w,\\
	\end{split}\]
    where the constants $\alpha,d,g_1$ are defined by the following formulas:

    \[\begin{split}
		d & = - \begin{vmatrix} k & l \\ r & s \end{vmatrix},\\
		\alpha & = \frac{(-1)^{g_{1} + g_{2}}2 - (-1)^{g_{3}}q^{2d} - (-1)^{g_{4}}q^{-2d}}{[2]^{2}},\\
		g_{1} & = \mathrm{gcd}(k,l),\\
		g_{2} & = \mathrm{gcd}(r,s),\\
		g_{3} & = \mathrm{gcd}(k + r, l + s),\\
		g_{4} & = \mathrm{gcd}(k - r, l - s).\\
	\end{split}\]
\end{proposition}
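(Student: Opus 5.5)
The plan is to prove the three relations in reverse order, since each feeds into the next. First I would establish $w^2=2[2]^2w$ and $(k,l)_Y\cdot w=(-1)^{g_1}2w$. After that, the general product-to-sum formula should follow from two ingredients: the map $\varphi$ to the Kauffman bracket skein algebra, which pins down everything except the coefficient of $w$; and multiplication by $w$, which pins down that coefficient.

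\textbf{The relations involving $w$.} I would use the action of the mapping class group $\mathrm{SL}_2(\Z)$ on $\Y(\T^2)$, which is by algebra automorphisms.
\begin{itemize}
\item First check that $w$ is $\mathrm{SL}_2(\Z)$-invariant. The hexagon $h$ is invariant under the order-six rotation of the hexagonal torus, and the correction $-(1,0)-(0,1)-(1,1)+3\varnothing$ is designed to make $w$ invariant under the generators $S$ and $T$. I would verify this directly by I-H moves, applying a Dehn twist to $h$ and reducing.
\item Any coprime $(k,l)$ is the image of $(1,0)$ under some element of $\mathrm{SL}_2(\Z)$. So for coprime $(k,l)$ it suffices to compute $(1,0)\cdot w=-2w$. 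This is a local computation: stack the meridian on $h$, resolve the two crossings with \eqref{eq:skein3}, and simplify. Products of $(1,0)$ with the three curves in $w$ are handled by the Kauffman bracket formulas via $\varphi$ where possible.
\item For general $(k,l)$ with $g=\gcd(k,l)$, threading is polynomial in the coprime curve $K=(k/g,l/g)$. Hence $(k,l)_Y\cdot w=Y_g(-2)\,w$. Writing $-2=\lambda+1+\lambda^{-1}$ with $\lambda=-1$ gives $P_g(-2)=2(-1)^g+1$, so $Y_g(-2)=2(-1)^g$, as claimed.
\item The relation $w^2=2[2]^2w$ is a single direct diagrammatic computation of $h\cdot h$, expanded using the previous relation.
\end{itemize}

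\textbf{The product-to-sum formula.} Here I would apply $\varphi$.
\begin{itemize}
\item A loop maps to $K^2-1$, since closing the second Jones--Wenzl projector gives this. Using $P_g(x^2-1)=T_g(x)^2-1$, we get $\varphi((k,l)_Y)=T_g(K)^2-2=(2k,2l)_T$ in Frohman--Gelca notation.
\item The Frohman--Gelca product-to-sum formula, in the normalization where the framing factor is $q^{2d}$ for doubled curves, then gives
$\varphi\bigl((k,l)_Y(r,s)_Y\bigr)=\varphi\bigl(q^{2d}(k+r,l+s)_Y+q^{-2d}(k-r,l-s)_Y\bigr)$.
The convention $(0,0)_Y=2\varnothing$ handles the degenerate case.
\item Next I would show $\varphi(w)=0$. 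This is a direct expansion of $h$, or can be quoted from the non-injectivity result.
\item The images $\varphi((k,l)_Y)$ and $\varphi(\varnothing)$ are linearly independent, being Frohman--Gelca basis elements. Together with Proposition \ref{thm:torus basis}, this shows $\ker\varphi=\C w$. Hence the difference between the product and the two main terms equals $\alpha w$ for some scalar $\alpha$.
\end{itemize}

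\textbf{Determining $\alpha$.} Multiply the identity on the right by $w$ and use the relations already proved.
\begin{itemize}
\item The left side becomes $(-1)^{g_1+g_2}4w$.
\item The right side becomes $\bigl((-1)^{g_3}2q^{2d}+(-1)^{g_4}2q^{-2d}+2[2]^2\alpha\bigr)w$.
\item Since $w\neq 0$, solving for $\alpha$ gives exactly the stated formula.
\end{itemize}

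I expect the main obstacle to be the first step: the $\mathrm{SL}_2(\Z)$-invariance of $w$ and the base computation $(1,0)\cdot w=-2w$. These require honest web manipulations on the torus, where $\varphi$ cannot be used because it kills $w$. I would first try to carry them out by resolving crossings of the meridian with the three edges of $h$ and collapsing the resulting bigons and squares.
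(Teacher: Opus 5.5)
The paper does not prove this proposition. It is quoted from \cite{ESS_2025}, so there is no in-paper argument to compare with, and your outline has to stand on its own.

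\textbf{What works.} The architecture is sound. The identity $P_g(x^2-1)=T_g(x)^2-1$ gives $\varphi((k,l)_Y)=T_g(K)^2-2=(2k,2l)_T$. With Proposition~\ref{thm:torus basis}, the kernel of $\varphi$ is therefore $\C w$ as soon as $\varphi(w)=0$. Multiplying the product-to-sum identity by $w$ then returns exactly the stated $\alpha$, including the degenerate cases via $(0,0)_Y=2\varnothing$ and $\gcd(0,0)=0$.

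You can shorten your first block. Since $\C w=\ker\varphi$ is a two-sided ideal, $(k,l)_Yw$ and $w^2$ lie in $\C w$ for free, and $x\mapsto\epsilon(x)$ with $xw=\epsilon(x)w$ is a character. Once $\epsilon=-1$ on primitive knots, apply $\epsilon$ to $[2]h=(1,0)(0,1)-q^2(-1,1)-(q^{-2}-1)(1,1)$. This gives $\epsilon(w)=2[2]^2$ with no $h\cdot h$ computation. Invariance of $w$ under a Dehn twist is a single I-H move between the two theta graphs containing $(1,0)$ and $(0,1)$.

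Be aware that you use both halves of Proposition~\ref{thm:torus basis}: spanning to get $\ker\varphi\subseteq\C w$, and independence to get $w\neq0$. That is fine here, but it would be circular if that basis were itself proved using the multiplication table.

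\textbf{The eigenvalue of the meridian is wrong as written.} Since $(1,0)_Y=(1,0)^{[Y_1]}=(1,0)-\varnothing$, the target relation $(1,0)_Yw=-2w$ is equivalent to $(1,0)\cdot w=-w$ for the knot itself. So stacking the meridian on $w$ must give $-w$, not $-2w$. Also, $(1,0)\cdot h$ involves only one crossing, not two: the meridian can be pushed off the cycle of $h$ formed by two of its edges so that it meets only the third edge.

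The threading step should read $(k,l)_Yw=Y_g(-1)\,w$. The substitution $\lambda=-1$ is correct precisely because $\lambda+1+\lambda^{-1}=-1$, giving $P_g(-1)=2(-1)^g+1$ and $Y_g(-1)=2(-1)^g$. As you wrote it, $-1+1-1\neq-2$, and $Y_g(-2)\neq 2(-1)^g$ (already $Y_1(-2)=-3$). So your base computation and your threading step contradict each other.

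\textbf{The constant term in $w$ matters when you check $\varphi(w)=0$.} Expanding the three projectors gives $\varphi([2]h)=(1,0)^2+(0,1)^2+(1,1)^2-2-[2]^2$ in $\Sq(\T^2)$. With the printed $+3\varnothing$ in the definition of $w$, you would find $\varphi(w)=4-[2]^2=-(q-q^{-1})^2$, which is nonzero unless $q^2=1$. With that normalization the relations $w^2=2[2]^2w$ and $(k,l)_Yw=(-1)^{g_1}2w$ fail.

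The proposition holds for $w=[2]h-(1,0)-(0,1)-(1,1)+[3]\varnothing$. This is also what the table entry for $(1,0)_Y(0,1)_Y$, combined with the crossing expansion above, forces. It agrees with the printed definition exactly when $[3]=3$, as at the parameter $t$ in Section~\ref{sec: Frobenius homomorphism for torus}. With these two corrections your argument goes through.
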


We record the following useful formula for the composition of polynomials $Y_k \circ P_n.$
\begin{lemma}\thlabel{Y_comp}
For any $k, l \in \N$, $Y_k \circ P_l = Y_{kl}$.
\end{lemma}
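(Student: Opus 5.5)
Since $Y_k = P_k - 1$, the claim is equivalent to $P_k(P_l(x)) - 1 = P_{kl}(x) - 1$. So it reduces immediately to the composition identity \eqref{eq:P composition}, $P_k \circ P_l = P_{kl}$. The plan is to establish that identity carefully and then subtract $1$ from both sides.

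To prove $P_k \circ P_l = P_{kl}$, I would use the characterizing property
\[
P_n(\lambda+1+\lambda^{-1})=\lambda^n+1+\lambda^{-n}, \qquad \lambda\in\C^\times,
\]
which follows from the recursion in Definition \ref{defn:Pn-family} by induction on $n$.

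\textbf{Checking the property.} For $n=0,1,2$ it is checked directly; for instance, with $x=\lambda+1+\lambda^{-1}$ we get $x^2-2x=\lambda^2+1+\lambda^{-2}$. For the inductive step, set $a_n=\lambda^n+1+\lambda^{-n}$. One verifies that
\[
(\lambda+1+\lambda^{-1})(a_{n-1}-a_{n-2})+a_{n-3}=a_n .
\]
This holds because $a_{n-1}-a_{n-2}=\lambda^{n-1}-\lambda^{n-2}+\lambda^{-(n-1)}-\lambda^{-(n-2)}$, and expanding the product makes the intermediate terms telescope. Equivalently, $x^3-x\cdot x^2+x\cdot x-1$ is the characteristic polynomial with roots $\lambda,1,\lambda^{-1}$.

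\textbf{Deducing the composition identity.} Apply the property twice:
\[
P_k\bigl(P_l(\lambda+1+\lambda^{-1})\bigr)=P_k(\lambda^l+1+\lambda^{-l})=\lambda^{kl}+1+\lambda^{-kl}=P_{kl}(\lambda+1+\lambda^{-1}).
\]
The map $\lambda\mapsto\lambda+1+\lambda^{-1}$ from $\C^\times$ to $\C$ is surjective, since any $x$ gives a quadratic in $\lambda$ with nonzero roots. Hence the two polynomials $P_k\circ P_l$ and $P_{kl}$ agree on all of $\C$ and are therefore equal. Subtracting $1$ gives $Y_k\circ P_l = P_{kl}-1 = Y_{kl}$.

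The only step needing any care is the inductive verification of the characterizing property from the third-order recursion, and that is a routine telescoping computation. No skein-theoretic input is required.
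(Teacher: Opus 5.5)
Your proof is correct and is the paper's argument: the paper likewise rewrites $Y_k\circ P_l$ as $P_k(P_l(x))-1$ and invokes $P_k\circ P_l=P_{kl}$ from \eqref{eq:P composition}. The only difference is that you also prove that composition identity, using $P_n(\lambda+1+\lambda^{-1})=\lambda^n+1+\lambda^{-n}$ and the surjectivity of $\lambda\mapsto\lambda+1+\lambda^{-1}$, whereas the paper simply asserts it. Your verification is sound, except that the characteristic polynomial should read $T^3-xT^2+xT-1$ with $x=\lambda+1+\lambda^{-1}$, rather than reusing $x$ as the variable.
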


\begin{proof}
Recall from Equation $\eqref{eq:P composition}$, we have $P_k\circ P_l = P_{kl}.$ Thus, using $Y_k=P_k-1$ we obtain:
\[Y_k(P_l(x)) = P_k(P_l(x)) - 1 = P_{kl}(x) - 1 = Y_{kl}(x).  \]
\end{proof}

\begin{lemma}\label{lem:eta alg map}
Let $q^2$ be a primitive $n$th root of unity and let $t = q^{n^2}$. For any algebra homomorphism $\eta: \mathcal{S}_t^Y(\T^2) \to \mathcal{S}_q^Y(\T^2)$ that sends each torus knot $(k,l)$ to $(k,l) ^{[P_n]}$, we must have:
\[\eta(w) = \alpha_n w\]
where $\alpha_n = \frac{2((-1)^{n+1} + 1)}{(q + q^{-1})^2}.$
\end{lemma}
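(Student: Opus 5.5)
The plan is to pin down $\eta(w)$ by applying $\eta$ to a single instance of the product-to-sum formula of Proposition \ref{thm:multiplication table} in $\mathcal{S}_t^Y(\T^2)$, namely the product $(1,0)_Y\cdot(0,1)_Y$, which is the simplest product whose expansion contains $w$ with a nonzero coefficient. The formula for $\eta(w)$ then drops out by comparing with the same product computed in $\mathcal{S}_q^Y(\T^2)$.

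\textbf{Step 1: the images of torus knots.} For coprime $(k,l)$ we have $Y_1=P_1-1=x-1$, so $(k,l)_Y=(k,l)-\varnothing$. Since $\eta$ is a unital algebra map, $\eta(\varnothing)=\varnothing$. Using $(k,l)^{[P_n]}=(k,l)^{[Y_n]}+\varnothing=(nk,nl)_Y+\varnothing$, we get
\[
\eta\big((k,l)_Y\big)=(nk,nl)_Y .
\]

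\textbf{Step 2: the relation in the source.} Apply Proposition \ref{thm:multiplication table} at parameter $t$ with $(k,l)=(1,0)$ and $(r,s)=(0,1)$. Then $d=-1$ and all four gcd's $g_1,g_2,g_3,g_4$ equal $1$. Hence
\[
\alpha=\frac{2+t^{-2}+t^{2}}{[2]_t^2}=1 .
\]
Since $q^{2n}=1$, we have $t^{\pm 2}=q^{\pm 2n^2}=1$. So
\[
(1,0)_Y(0,1)_Y=(1,1)_Y+(1,-1)_Y+w ,
\]
and applying $\eta$ with Step 1 gives
\[
(n,0)_Y\,(0,n)_Y=(n,n)_Y+(n,-n)_Y+\eta(w).
\]

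\textbf{Step 3: the same product in the target.} Compute $(n,0)_Y(0,n)_Y$ directly in $\mathcal{S}_q^Y(\T^2)$ by Proposition \ref{thm:multiplication table}. Here $d=-n^2$, so $q^{\pm 2d}=1$. The gcd's are $g_1=g_2=g_3=g_4=n$, so the coefficient of $w$ is
\[
\alpha'=\frac{2-2(-1)^n}{[2]_q^2}.
\]
Thus
\[
(n,0)_Y(0,n)_Y=(n,n)_Y+(n,-n)_Y+\alpha' w .
\]
Subtracting this from the identity in Step 2 gives $\eta(w)=\alpha' w=\alpha_n w$, as claimed.

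\textbf{Main obstacle.} The computation itself is short. The real care is in the bookkeeping of Step 1: one must remember that $(k,l)_Y=(k,l)-\varnothing$ for coprime $(k,l)$, so that the constant terms produced by threading with $P_n$ cancel exactly. One must also check that every root-of-unity factor $t^{\pm2}$ and $q^{\pm 2n^2}$ is $1$, using only $q^{2n}=1$ together with the standing assumption $[2]_q,[2]_t\neq 0$.
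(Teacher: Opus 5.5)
Your proof is correct and follows essentially the same route as the paper. Both arguments express $w$ in $\mathcal{S}_t^Y(\T^2)$ through products of torus knots using $t^2=1$, apply $\eta$ via $\eta((k,l)_Y)=(nk,nl)_Y$, and evaluate $(n,0)_Y(0,n)_Y$ in $\mathcal{S}_q^Y(\T^2)$ with Proposition \ref{thm:multiplication table}; the only difference is that the paper gets the source relation $[2]_t h=(1,0)(0,1)-(-1,1)$ by resolving the crossing diagrammatically, whereas you read off the equivalent identity $w=(1,0)_Y(0,1)_Y-(1,1)_Y-(1,-1)_Y$ from the multiplication table at parameter $t$ (valid since $[2]_t=\pm 2\neq 0$), which spares you the skein computation and the final bookkeeping of $\varnothing$ terms.
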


\begin{proof}
Recall that the product of $K,L \in \mathcal{S}_q^Y(\T^2)$, denoted $KL$, is obtained by drawing $K$ on top of $L$. Note that $t^2 = q^{2n^2} = 1$. Thus, by expanding $[2]_{t}h \in \mathcal{S}_t^Y$ with the skein relations, we get: 
\begin{align*}
[2]_{t}h = (1,0)(0,1) - t^2(-1,1) - (t^{-2}-1)(1,1) = (1,0)(0,1) - (-1,1),
\end{align*} which is written diagrammatically as:
\begin{figure}[h!]
\centering
\resizebox{14cm}{!}{

\tikzset{every picture/.style={line width=0.75pt}} 

\begin{tikzpicture}[x=0.75pt,y=0.75pt,yscale=-1,xscale=1]

\draw   (51,91) -- (151,91) -- (151,191) -- (51,191) -- cycle ;
\draw   (193,91) -- (293,91) -- (293,191) -- (193,191) -- cycle ;
\draw   (341,90) -- (441,90) -- (441,190) -- (341,190) -- cycle ;
\draw    (51,141) -- (151,141) ;
\draw    (101,91) -- (80,141) ;
\draw    (120,141) -- (100,191) ;
\draw    (193,141) -- (293,141) ;
\draw  [color={rgb, 255:red, 255; green, 255; blue, 255 }  ,draw opacity=1 ][fill={rgb, 255:red, 255; green, 255; blue, 255 }  ,fill opacity=1 ] (240,138) -- (246,138) -- (246,145.5) -- (240,145.5) -- cycle ;
\draw    (243,90.5) -- (243,141) -- (243,190.5) ;
\draw  [draw opacity=0] (341,137.5) .. controls (341,137.5) and (341,137.5) .. (341,137.5) .. controls (369.99,137.5) and (393.5,161.01) .. (393.5,190) -- (341,190) -- cycle ; \draw   (341,137.5) .. controls (341,137.5) and (341,137.5) .. (341,137.5) .. controls (369.99,137.5) and (393.5,161.01) .. (393.5,190) ;  
\draw  [draw opacity=0] (440.96,142.5) .. controls (440.96,142.5) and (440.96,142.5) .. (440.96,142.5) .. controls (411.97,142.48) and (388.48,118.96) .. (388.5,89.96) -- (441,90) -- cycle ; \draw   (440.96,142.5) .. controls (440.96,142.5) and (440.96,142.5) .. (440.96,142.5) .. controls (411.97,142.48) and (388.48,118.96) .. (388.5,89.96) ;  
\draw   (630.02,89.98) -- (629.98,189.98) -- (529.98,189.95) -- (530.02,89.95) -- cycle ;
\draw  [draw opacity=0] (582.52,89.96) .. controls (582.52,89.96) and (582.52,89.96) .. (582.52,89.96) .. controls (582.52,89.96) and (582.52,89.96) .. (582.52,89.96) .. controls (582.51,118.96) and (558.99,142.46) .. (530,142.45) -- (530.02,89.95) -- cycle ; \draw   (582.52,89.96) .. controls (582.52,89.96) and (582.52,89.96) .. (582.52,89.96) .. controls (582.52,89.96) and (582.52,89.96) .. (582.52,89.96) .. controls (582.51,118.96) and (558.99,142.46) .. (530,142.45) ;  
\draw  [draw opacity=0] (577.48,189.93) .. controls (577.48,189.93) and (577.48,189.93) .. (577.48,189.93) .. controls (577.48,189.93) and (577.48,189.93) .. (577.48,189.93) .. controls (577.51,160.93) and (601.04,137.45) .. (630.04,137.48) -- (629.98,189.98) -- cycle ; \draw   (577.48,189.93) .. controls (577.48,189.93) and (577.48,189.93) .. (577.48,189.93) .. controls (577.48,189.93) and (577.48,189.93) .. (577.48,189.93) .. controls (577.51,160.93) and (601.04,137.45) .. (630.04,137.48) ;  

\draw (25,130.4) node [anchor=north west][inner sep=0.75pt]    {$[2]_t$};
\draw (164,137.4) node [anchor=north west][inner sep=0.75pt]    {$=$};
\draw (303,130.4) node [anchor=north west][inner sep=0.75pt]    {$-t^{2}$};
\draw (447,130.4) node [anchor=north west][inner sep=0.75pt]    {$-\left( t^{-2} -1\right)$};
\draw (97,203.4) node [anchor=north west][inner sep=0.75pt]    {$h$};
\draw (206,203.4) node [anchor=north west][inner sep=0.75pt]    {$( 1,0)( 0,1)$};
\draw (366,203.4) node [anchor=north west][inner sep=0.75pt]    {$( -1,1)$};
\draw (561,203.4) node [anchor=north west][inner sep=0.75pt]    {$( 1,1)$};

\end{tikzpicture}

}
\end{figure}
\bigskip 

 Recall that $\eta((k,l)) = (k,l)^{[P_n]} = (k,l)^{[Y_n + 1]}$ and that $q^2$ is a primitive $n$th root of unity. Thus we obtain the following formulae for $\eta([2]_t h)$:
\begin{align*}
\eta([2]_{t}h) =& \eta((1,0)(0,1) - (-1,1))\\ =& (1,0)^{[Y_n + 1]}(0,1)^{[Y_n + 1]} - (-1,1)^{[Y_n + 1]}\\ 
=& ((n,0)_Y+\varnothing)((0,n)_Y + \varnothing) - ((-n,n)_Y + \varnothing) \\
=& (n,0)_Y(0,n)_Y + (n,0)_Y + (0,n)_Y + \varnothing -(-n,n)_Y - \varnothing \\
\overset{(\text{Prop } \ref{thm:multiplication table})}{=} & q^{-2n^2}(n,n)_Y + q^{2n^2}(n,-n)_Y + \frac{(-1)^{n+1}q^{2n^2} + 2 + (-1)^{n+1}q^{-2n^2}}{[2]_{q}^2} w \\
& + (n,0)_Y + (0,n)_Y - (-n,n)_Y \\
=& (n,n)_Y + \frac{2((-1)^{n+1} + 1)}{[2]_{q}^2}w +(n,0)_Y + (0,n)_Y.
\end{align*}

With the formula for $\eta([2]_{t}h)$ computed, we now compute $\eta(w)$:
\begin{align*}
\eta(w) =& \eta([2]_{t}h) - (0,1)^{[Y_n + 1]} - (1,0)^{[Y_n + 1]} - (1,1)^{[Y_n + 1]} + 3\varnothing\\
=& (n,n)_Y + \frac{2((-1)^{n+1} + 1)}{[2]_q^2}w +(n,0)_Y + (0,n)_Y \\ &- (0,n)_Y - \varnothing - (n,0)_Y - \varnothing -(n,n)_Y - \varnothing + 3\varnothing \\=&\frac{2((-1)^{n+1} + 1)}{[2]_{q}^2}w = \alpha_n w.
\end{align*}
\end{proof}

We record that \begin{align*}
\alpha_n = \begin{dcases}
0, & n \text{ even} \\ 
\frac{4}{[2]_{q}^2}, & n \text{ odd}.
\end{dcases}
\end{align*}

\begin{theorem}\label{thm: Frobenius torus}
Let $q^2$ be a primitive $n$th root of unity and let $t = q^{n^2}$. Then, there exists a unique algebra homomorphism $Fr^Y: \mathcal{S}_{t}^Y(\T^2) \rightarrow \mathcal{S}_{q}^Y(\T^2)$ that threads any multicurve by $P_n$.
\end{theorem}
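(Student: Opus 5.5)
Uniqueness is the easy half. The algebra $\mathcal{S}_t^Y(\T^2)$ is generated by multicurves (indeed by the torus knots $(k,l)$), because $w=[2]_t h-(1,0)-(0,1)-(1,1)+3\varnothing$ and $[2]_t h=(1,0)(0,1)-(-1,1)$. So any algebra homomorphism that threads multicurves by $P_n$ is determined on generators. Lemma \ref{lem:eta alg map} then forces $\eta(w)=\alpha_n w$.

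For existence, define $Fr^Y$ as a linear map on the basis of Proposition \ref{thm:torus basis}:
\begin{align*}
Fr^Y((k,l)_Y)=(nk,nl)_Y,\qquad Fr^Y(w)=\alpha_n w,\qquad Fr^Y(\varnothing)=\varnothing.
\end{align*}
First we check that this map threads multicurves by $P_n$. A multicurve on the torus is $m$ parallel copies of a primitive curve $(a,b)$, i.e. $(a,b)^{[x^m]}$. Since $x^m$ lies in the span of $1,Y_1,\dots,Y_m$, and $(a,b)^{[Y_g]}=(ga,gb)_Y$, it suffices to check the claim on $(ga,gb)_Y$. There we need $(a,b)^{[Y_g\circ P_n]}=(a,b)^{[Y_{gn}]}=(gna,gnb)_Y$, which holds by Lemma \ref{Y_comp}. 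The constant term is handled by $P_n(\ldots)$ applied to $1=\tfrac12(0,0)_Y$, which should go to $\varnothing$; this is consistent since $(0,0)_Y=2\varnothing$ is sent to $(0,0)_Y$.

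Next we verify multiplicativity on pairs of basis elements using Proposition \ref{thm:multiplication table} at parameter $t$ and at parameter $q$. Products involving $\varnothing$ are trivial. For $(k,l)_Y\cdot(r,s)_Y$, the determinant scales as $d\mapsto n^2d$, so $q^{2n^2 d}=t^{2d}$ and the $q^{\pm 2d'}$ coefficients match $t^{\pm2d}$. It remains to compare the $w$ coefficients: we need $\alpha_n\,\alpha(t;k,l,r,s)=\alpha(q;nk,nl,nr,ns)$. Here $t^2=1$ and $q^{2n^2d}=1$, and all gcds get multiplied by $n$. For $n$ even both sides vanish: the right side has numerator $2-1-1=0$. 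For $n$ odd the parities $(-1)^{g_i}$ are preserved, and the identity reduces to
\begin{align*}
\frac{4}{[2]_q^2}\cdot\frac{N}{[2]_t^2}=\frac{N}{[2]_q^2}
\end{align*}
with the same numerator $N$. This holds because $[2]_t^2=(t+t^{-1})^2=4$ when $t^2=1$ (note $t=\pm1$ in that case).

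For $(k,l)_Y\cdot w$ we need $\alpha_n(-1)^{g_1}2=(-1)^{ng_1}2\alpha_n$, which holds: it is trivial when $\alpha_n=0$, and the signs agree when $n$ is odd. For $w^2$ we need $\alpha_n^2\cdot 2[2]_t^2=\alpha_n\cdot 2[2]_q^2$, i.e. $\alpha_n\cdot 4=[2]_q^2$ or $\alpha_n=0$; both cases check.

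The main obstacle is the boundary bookkeeping in the first identity. When $(k\pm r,l\pm s)=(0,0)$ we have the convention $(0,0)_Y=2\varnothing$ and $\gcd(0,0)=0$, and we must make sure these degenerate cases match under scaling by $n$, including the parity of $g_3,g_4$ when they are $0$. I would handle these cases separately, first, using $(0,0)_Y\mapsto 2\varnothing$. Uniqueness then follows from the generation argument above.
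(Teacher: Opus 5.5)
Your proposal is correct and follows the paper's proof essentially step for step: you define $Fr^Y$ on the basis, get uniqueness from Lemma~\ref{lem:eta alg map} together with the identity $Y_k\circ P_l=Y_{kl}$, and check the three kinds of basis products via Proposition~\ref{thm:multiplication table}. The degenerate $(0,0)$ cases you flag as the main obstacle are in fact automatic, since $\gcd(0,0)=0$ scales to $0$ and $(0,0)_Y=2\varnothing$ is fixed by $Fr^Y$.

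There is one slip to fix. In the $w^2$ check you swapped $t$ and $q$: the required identity is $\alpha_n\cdot 2[2]_t^2=\alpha_n^2\cdot 2[2]_q^2$, that is, $\alpha_n=0$ or $\alpha_n[2]_q^2=4$, and this does hold. Your version, $4\alpha_n=[2]_q^2$, is false in general; for example $n=3$, $q=e^{i\pi/3}$ gives $[2]_q=1$ and $\alpha_3=4$.
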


\begin{proof}
Define a module homomorphism $Fr^Y: \mathcal{S}_t^Y(\T^2) \to S_{q}^Y(\T^2)$ by the following assignments on the basis of $\mathcal{S}_t^Y(\T^2)$ from Proposition \ref{thm:torus basis}:

\begin{align*}
Fr^Y((mk, ml)_Y) &= (nmk,nml)_Y \text{ for $(k,l)$ coprime and $m \in \Z_{>0}$}\\
Fr^Y(w) &= \alpha_n w\\
Fr^Y(\varnothing)&=\varnothing.
\end{align*}

We first remark that this definition of $Fr^Y$ on $w$ is forced by Lemma \ref{lem:eta alg map}.
Furthermore, the definition of $Fr^Y((mk,ml)_Y)$ is forced by \thref{Y_comp} along with the condition that we set $Fr^Y((k,l))=(k,l)^{[P_n]}$ for $k,l$ coprime since:

\begin{equation*}
Fr^Y((mk, ml)_Y) = Fr^Y((k, l)^{[Y_m]}) = (k,l)^{[Y_m\circ P_n]}= (k,l)^{[Y_{mn}]} = (mnk,mnl)_Y.    
\end{equation*}

Therefore, the module homomorphism $Fr^Y$ will be the unique algebra homomorphism satisfying our hypothesis once it is shown to be an algebra homomorphism.

To show $Fr^Y$ is an algebra homomorphism, it suffices to show that the map is multiplicative on pairs of basis elements of $\mathcal{S}_{t}^Y(\T^2)$. We prove this for the basis of $\mathcal{S}_{t}^Y(\T^2)$ set out in Proposition \ref{thm:torus basis} via the multiplication table from Proposition \ref{thm:multiplication table}. Since the map is automatically multiplicative on a pair of basis elements if one of the elements is $\varnothing,$ we must check that the following identities hold in $\Y(\T^2)$:

\begin{itemize}
\item[(i)] $Fr^Y(w^2) = (Fr^Y(w))^2$,
\item[(ii)] $Fr^Y(w(k,l)_Y) = Fr^Y(w)Fr^Y( (k,l)_Y)$ (and its opposite product),
\item[(iii)] $Fr^Y((k,l)_Y (r,s)_Y) = Fr^Y((k,l)_Y)Fr^Y( (r,s)_Y)$.
\end{itemize}

We will first check (i). From Proposition \ref{thm:multiplication table} we will use the identity $w^2 = 2[2]^2w$.
First, suppose that $n$ is even. In this case, we have that $\alpha_n = 0$, and we check that
\begin{align*}
Fr^Y(w^2) &= Fr^Y(2[2]_{t}^2 w) = 2[2]_{t}^2 \alpha_n w = 0 \\
(Fr^Y(w))^2 &= (\alpha_nw)^2 = 0,
\end{align*}
as desired.

Now, suppose that $n$ is odd. We first note that $[2]_{t}^2 = (q^{n^2} + q^{-n^2})^2 = 4$, and we check that:
\begin{align*}
Fr^Y(w^2) &= Fr^Y(2[2]_{t}^2 w) = 8 \alpha_nw = \frac{32}{[2]_{q}^2}w\\
(Fr^Y(w))^2 &= (\alpha_nw)^2 = \frac{16}{[2]_{q}^4} 2[2]_{q}^2w = \frac{32}{[2]_{q}^2}w,
\end{align*}

as desired.

\bigskip
We will next check the identity (ii). Its opposite will follow by symmetry. We will use the following identity from Proposition \ref{thm:multiplication table}:
\begin{align*}
&w(k,l)_Y = (k,l)_Yw = 2(-1)^{\gcd(k,l)}w.
\end{align*}
Then, we have:
\begin{align*}
&Fr^Y(w)Fr^Y((k,l)_Y) = \alpha_nw (nk, nl)_Y = \alpha_n 2(-1)^{n\gcd(k,l)} w , \\
&Fr^Y(w(k,l)_Y) = Fr^Y(2(-1)^{\gcd(k,l)}w) = \alpha_n2(-1)^{\gcd(k,l)}  w,
\end{align*}
For $n$ even, $\alpha_n = 0$ and equality holds. For $n$ odd, $(-1)^{n\gcd(k,l)} = (-1)^{\gcd(k,l)}$ and equality holds.

Lastly, we will check the identity (iii). We will use the identity from Proposition \ref{thm:multiplication table} that for $(k, l), (r, s) \in \Z^2 \setminus \{(0,0)\}$ we have
\begin{align*}
(k,l)_Y(r,s)_Y = q^{2d_0}(k+r, l+s)_Y + q^{-2d_0}(k-r,l-s)_Y + \alpha w,
\end{align*}

where we recall that
\begin{align*}
&d_0 = -\begin{vmatrix}
    k & l \\
    r & s
\end{vmatrix}, \\
&\alpha = \frac{(-1)^{g_1+g_2}2 + (-1)^{g_3+1}q^{2d_0} + (-1)^{g_4+1}q^{-2d_0}}{[2]_q^2}\\
&g_1 = \gcd(k, l),\\
&g_2 = \gcd(r, s),\\
&g_3 = \gcd(k+r, l+s), \\
&g_4 = \gcd(k-r, l-s). \\
\end{align*}

Let $k, l, r, s \in \Z$ satisfy $\gcd(k, l)=\gcd(r, s)=1.$ Let $i, j \in \Z_{>0}$. To prove (iii), it suffices to show
\begin{align}\label{eq:prime-coprime-torus}
    Fr^Y((ik,il)_Y (jr,js)_Y) = Fr^Y((ik,il)_Y)Fr^Y( (jr,js)_Y).
\end{align}
Computing the right-hand side of \eqref{eq:prime-coprime-torus} yields:

\begin{align*}
Fr^Y((ik,il)_Y)Fr^Y((jr,js)_Y) &=(nik,nil)_Y(njr,njs)_Y \\
&= q^{2d_n}(nik+njr,nil+njs)_Y \\&\qquad\qquad+ q^{-2d_n}(nik-njr, nil -njs)_Y + \alpha w.
\end{align*}
Note that $d_n = -\begin{vmatrix}
nik & nil \\
njr & njs
\end{vmatrix}$ is divisible by $n$, so $q^{2d_n} = 1$. Thus, this reduces to
\begin{equation}\label{eq:ch_1}
\begin{split}
Fr^Y((ik,il)_Y)Fr^Y((jr,js)_Y) &= (nik+njr,nil+njs)_Y \\ &\qquad+ (nik-njr, nil -njs)_Y + \alpha w
\end{split}
\end{equation}

where the formula for $\alpha$ specializes to:
\begin{align*}
\alpha &= \frac{(-1)^{n(i+j)}2 + (-1)^{ng_3' + 1} + (-1)^{ng_4' + 1}}{[2]_{q}^2},\\
\text{where }g_3' &\coloneq \gcd(ik + jr, il + js),\\
g_4' &\coloneq \gcd(ik - jr, il - js).
\end{align*}

\bigskip
\noindent Checking the left-hand side of \eqref{eq:prime-coprime-torus}, we have:
\begin{align*}
Fr^Y((ik, il)_{Y} (jr, js)_{Y}) &= 
Fr^Y(t^{2ijd_0}(ik+jr, il+js)_Y + t^{-2ijd_0}(ik-jr,il-js)_Y + \beta w)  \\
&\overset{(t^2 = 1)}{=}Fr^Y((ik+jr, il+js)_Y) + Fr^Y((ik-jr,il-js)_Y) + Fr^Y(\beta w) \\ 
&= (nik+njr,nil+njs)_Y + (nik-njr, nil -njs)_Y + \beta \alpha_n w
\end{align*}
where
\[\beta = \frac{(-1)^{i+j}2 +(-1)^{g_3'+1}t^{2ijd_0} 
+ (-1)^{g_4'+1}t^{-2ijd_0}}{[2]_t^2} = \frac{(-1)^{i+j}2 +(-1)^{g_3'+1}
+ (-1)^{g_4'+1}}{4}. \]

\bigskip
\noindent When $n$ is even we have $\alpha_n = 0$ and
\begin{align*}
\alpha = \frac{(-1)^{n(i+j)}2 + (-1)^{ng_3' + 1} + (-1)^{ng_4' + 1}}{[2]_{q}^2} = 0
\end{align*}
so that
\begin{align*}
Fr^Y((ik,il)_Y)Fr^Y((jr,js)_Y) &= (nik+njr,nil+njs)_Y + (nik-njr, nil -njs)_Y \\
&=
Fr^Y((ik, il)_{Y} (jr, js)_{Y}).
\end{align*}
When $n$ is odd, $\alpha_n = \frac{4}{[2]_q^2}$ and
\begin{align*}
\alpha &= \frac{(-1)^{n(i+j)}2 + (-1)^{ng_3' + 1} + (-1)^{ng_4' + 1}}{[2]_{q}^2} \\
&= \frac{(-1)^{(i+j)}2 + (-1)^{g_3' + 1} + (-1)^{g_4' + 1}}{[2]_{q}^2} = \frac{4}{[2]_{q}^2}\beta = \alpha_n \beta
\end{align*}
so that
\begin{align*}
Fr^Y((ik,il)_Y)Fr^Y((jr,js)_Y) &= (nik+njr,nil+njs)_Y + (nik-njr, nil -njs)_Y + \alpha_n\beta w \\
&=
Fr^Y((ik, il)_{Y} (jr, js)_{Y}).
\end{align*}
\end{proof}

The computation of $Fr^Y$ on the basis for $\mathcal{S}_t^Y(\T^2)$ yields the following corollary. 
\begin{corollary}
For the torus, $Fr^Y$ is not injective if $n$ is even. However, when $n$ is odd, $Fr^Y$ is injective.
\end{corollary}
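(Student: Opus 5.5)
The plan is to read everything off the explicit description of $Fr^Y$ on the basis of Proposition \ref{thm:torus basis}, as established in the proof of Theorem \ref{thm: Frobenius torus}. That basis is $\{(k,l)_Y, w, \varnothing : (k,l)\in\Z^2\setminus\{(0,0)\}\}$ of $\mathcal{S}_t^Y(\T^2)$. Under $Fr^Y$ it is sent to $(nk,nl)_Y$, $\alpha_n w$ and $\varnothing$ respectively. Proposition \ref{thm:torus basis} holds for any parameter with $[2]\neq 0$, so it applies at both $t$ and $q$. Note that $[2]_t=t+t^{-1}$ is nonzero because $t^2=q^{2n^2}=1$ gives $t=\pm1$; this should be remarked explicitly so the basis statement is available for $\mathcal{S}_t^Y$.

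\textbf{The case $n$ even.} Here $\alpha_n=0$ by Lemma \ref{lem:eta alg map}, so $Fr^Y(w)=0$. Since $w$ is a basis element of $\mathcal{S}_t^Y(\T^2)$, it is nonzero, and so $Fr^Y$ has a nonzero kernel. Hence it is not injective.

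\textbf{The case $n$ odd.} Now $\alpha_n=4/[2]_q^2\neq 0$. The key observation is that $Fr^Y$ maps the basis injectively into a rescaling of a subset of the basis of $\mathcal{S}_q^Y(\T^2)$:
\begin{itemize}
\item $(k,l)_Y\mapsto (nk,nl)_Y$ is injective on $\Z^2\setminus\{(0,0)\}$, and never lands on $(0,0)$.
\item $w\mapsto \alpha_n w$ is a nonzero multiple of a basis vector different from all the $(nk,nl)_Y$.
\item $\varnothing\mapsto\varnothing$.
\end{itemize}

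One subtlety in the first bullet is the identification $(k,l)_Y=(-k,-l)_Y$ for unoriented curves. If the basis is indexed modulo $\pm$, then multiplication by $n$ respects that quotient and remains injective on it, so the argument is unaffected.

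Given this, take a linear combination of basis elements in the kernel. Its image is a linear combination of distinct basis vectors of $\mathcal{S}_q^Y(\T^2)$ with coefficients that are nonzero multiples of the original ones. By linear independence in $\mathcal{S}_q^Y(\T^2)$, all coefficients vanish, so $Fr^Y$ is injective.

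The only potential obstacle is justifying that the images are genuinely distinct basis elements, i.e. the $\pm$ indexing and the nondegeneracy $[2]_t\neq0$. Both are routine.
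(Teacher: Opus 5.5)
Your proposal is correct and takes the same approach as the paper. The paper also just reads the corollary off the basis description of $Fr^Y$ in Theorem~\ref{thm: Frobenius torus}: for $n$ even, $Fr^Y(w)=\alpha_n w=0$ gives a nonzero kernel, and for $n$ odd, $Fr^Y$ sends the basis to nonzero multiples of distinct basis vectors. Your checks that $[2]_t=\pm 2\neq 0$ (so Proposition~\ref{thm:torus basis} applies at $t$) and that scaling by $n$ respects the identification $(k,l)\sim(-k,-l)$ are details the paper leaves implicit.
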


\section{Frobenius homomorphism for the pair of pants}\label{Sec: Frobenius pants}
In this section we compute the image of the $\rm{PGL}_2$ Frobenius homomorphism for the pair of pants $Fr^Y:\mathcal{S}^Y_t(\Sigma_{0,3})\rightarrow \mathcal{S}^Y_q(\Sigma_{0,3})$ when $q^2$ is a primitive $n$th root of unity and $t=q^{n^2}.$ We will use the Chebyshev-Frobenius homomorphism of Bonahon-Wong \cite{Bonahon_2015} from the Kauffman bracket case, $Fr:\mathcal{S}_t(\Sigma_{0,3})\rightarrow \mathcal{S}_q(\Sigma_{0,3}).$

Recall that $\mathcal{S}_q(\Sigma_{0,3})$ is generated by the
elements

\begin{equation*}
\begin{tikzpicture}[x=0.05cm,y=0.05cm]
  \draw (-100,0) circle[radius=0.5cm];
  \fill[black] (-100,0) circle[radius=1.5pt];
  \fill[black] (-80,0)  circle[radius=1.5pt];
  \node at (-90,-20) {$x$};

  \draw (10,0) circle[radius=0.5cm];
  \fill[black] (-10,0) circle[radius=1.5pt];
  \fill[black] (10,0)  circle[radius=1.5pt];
  \node at (0,-20) {$y$};

  \draw (90,0) ellipse[x radius=1cm,y radius=0.5cm];
  \fill[black] (80,0)  circle[radius=1.5pt];
  \fill[black] (100,0) circle[radius=1.5pt];
  \node at (90,-20) {$z$};
\end{tikzpicture}
\end{equation*}

\noindent and $\mathcal{S}_q^Y(\Sigma_{0,3})$ is generated by

\begin{equation*}
\begin{tikzpicture}[x=0.05cm,y=0.05cm]
  \draw (-100,0) circle[radius=0.5cm];
  \fill[black] (-100,0) circle[radius=1.5pt];
  \fill[black] (-80,0)  circle[radius=1.5pt];
  \node at (-90,-20) {$a$};

  \draw (10,0) circle[radius=0.5cm];
  \fill[black] (-10,0) circle[radius=1.5pt];
  \fill[black] (10,0)  circle[radius=1.5pt];
  \node at (0,-20) {$b$};

  \draw (90,0) ellipse[x radius=1cm,y radius=0.5cm];
  \fill[black] (80,0)  circle[radius=1.5pt];
  \fill[black] (100,0) circle[radius=1.5pt];
  \node at (90,-20) {$c$};

  \draw (180,0) ellipse[x radius=1cm,y radius=0.5cm];
  \draw (180,10) -- (180,-10);
  \fill[black] (170,0) circle[radius=1.5pt];
  \fill[black] (190,0) circle[radius=1.5pt];
  \node at (180,-20) {$p$};
\end{tikzpicture}
\end{equation*}

The set
$\{x^ry^sz^t \mid r, s, t \in \mathbb{N}_0\}$ constitutes a monomial
basis of $\mathcal{S}_q(\Sigma_{0,3})$, and the set
$\{a^rb^sc^tp^u \mid r, s, t \in \mathbb{N}_0, u = 0, 1\}$ constitutes a
monomial basis of $\Y(\Sigma_{0,3})$. In order to have cleaner formulas at the end of this section, we make the change of generator $d = p - \frac1{[2]}(a + b + c + 1) \in \mathcal{S}_q^Y(\Sigma_{0,3})$, noting that $\{a, b, c, d\}$ also generate $\mathcal{S}_q^Y(\Sigma_{0,3})$.

Recall the map $\varphi$ from \eqref{eq:phimap}. We will construct $Fr^Y$ satisfying the following commutative diagram.

\begin{equation}\label{diag: Frobenius pants}
\begin{tikzcd}[row sep=large, column sep=large]
  \mathcal{S}_t(\Sigma_{0,3})
    \arrow[r, "\mathrm{Fr}"]
  & \mathcal{S}_q(\Sigma_{0,3})
  \\
  \mathcal{S}_t^Y(\Sigma_{0,3})
    \arrow[u, "\varphi_t"]
    \arrow[r, "\mathrm{Fr}^Y"']
  & \mathcal{S}_q^Y(\Sigma_{0,3})
    \arrow[u, "\varphi_q"']
\end{tikzcd}
\end{equation}

\begin{theorem}\label{Thm:Frobenius pants}
When $q^2$ is a root of unity of order $n$ and $t=q^{n^2},$ there exists a unique algebra homomorphism $Fr^Y: \mathcal{S}_t^Y(\Sigma_{0,3}) \rightarrow \mathcal{S}_q^Y(\Sigma_{0,3})$ making the diagram in (\ref{diag: Frobenius pants}) commute.

In particular, the value of $Fr^Y$ on our generators is given by

\begin{equation*}
Fr^Y(l)=P_n(l) \text{ for } l \in \{a,b,c\},
\end{equation*}
where $P_n(x)$ is the unique polynomial satisfying $P_n(x^2-1)=T_n(x)^2-1$ from Definition \ref{defn:Pn-family}, and
\begin{equation*}
Fr^Y(d)=\begin{cases}
    d\cdot O_n(a)O_n(b)O_n(c) \text{ if $n$ is odd},\\
    E_n(a)E_n(b)E_n(c) \text{ if $n$ is even.}
\end{cases}
\end{equation*}

Here, $O_n(x)$ is defined to be the unique polynomial satisfying $O_n(x^2-1)=\frac{T_n(x)}{x},$ and $E_n(x)$ is defined to be the unique polynomial satisfying $E_n(x^2-1)=T_n(x).$
\end{theorem}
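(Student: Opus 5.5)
The plan is to exploit injectivity of $\varphi_q$ on the thickened 3-punctured sphere (recalled from \cite{Chb12} in the preliminaries). Uniqueness is then immediate: if $Fr^Y$ makes the diagram commute, then $\varphi_q\circ Fr^Y=Fr\circ\varphi_t$, so $Fr^Y$ is determined. For existence it suffices to show that $Fr\circ\varphi_t$ takes values in the subalgebra $\varphi_q(\mathcal{S}_q^Y(\Sigma_{0,3}))$. Then $Fr^Y:=\varphi_q^{-1}\circ Fr\circ\varphi_t$ is well defined, and it is an algebra homomorphism because each of the three maps is. Moreover, $Fr\circ\varphi_t$ is an algebra map, so it is enough to check this on the generators $a,b,c,d$. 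While doing so, we identify the preimages explicitly, which yields the stated formulas.

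For the loops, $\varphi(a)=x^2-1$, since a loop colored by the second Jones--Wenzl projector is two parallel copies minus $\frac{1}{[2]}$ times a trivial loop, and that trivial loop contributes $-[2]$. Hence $Fr(\varphi_t(a))=T_n(x)^2-1=P_n(x^2-1)=\varphi_q(P_n(a))$, using $P_n(x^2-1)=T_n(x)^2-1$ from Definition \ref{defn:Pn-family}. The same holds for $b$ and $c$, giving $Fr^Y(l)=P_n(l)$.

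For $d$, we first compute $\varphi_q(p)$ in the monomial basis $\{x^ry^sz^t\}$ by expanding the three Jones--Wenzl idempotents on the theta-shaped web. In the pair of pants, the arcs that appear resolve into products of boundary curves, and we then subtract $\frac1{[2]}\varphi(a+b+c+1)$. The change of generator to $d$ was chosen so that this becomes clean. I expect $\varphi_q(d)$ to be, up to a normalizing constant (for instance $1/[2]^2$ or similar, which must be tracked), the monomial $xyz$, with the $t$-version behaving the same way. Then $Fr(\varphi_t(d))$ is the corresponding constant times $T_n(x)T_n(y)T_n(z)$, and the constant may change between $t$ and $q$. It is worth checking that $[2]_t=\pm2$, which may force a correction factor.

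Next we separate by parity. If $n$ is even, $T_n$ is an even polynomial, so $T_n(x)=E_n(x^2-1)$ and $T_n(x)T_n(y)T_n(z)=\varphi_q(E_n(a)E_n(b)E_n(c))$. If $n$ is odd, $T_n(x)=x\,O_n(x^2-1)$, so the product equals $xyz\cdot\varphi_q(O_n(a)O_n(b)O_n(c))=\varphi_q(d\,O_n(a)O_n(b)O_n(c))$, up to the constant. The existence of $E_n$ and $O_n$ follows from the parity of $T_n$.

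\textbf{Main obstacle.} The key step is the exact computation of $\varphi(p)$ and the normalization of $d$, including whether $\varphi_t(d)$ and $\varphi_q(d)$ carry the same scalar. If they do not, the scalars must cancel at $t=q^{n^2}$, or they must be absorbed, possibly using $[2]_t^2=4$ (which holds since $t^2=\pm1$). I would begin by carefully expanding the theta web via the projectors to determine these constants.

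Finally, injectivity of $Fr^Y$, as claimed in the introduction, follows because $Fr$ is injective (by \cite{Bonahon_2015}) and $\varphi_t$ is injective.
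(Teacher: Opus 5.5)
Your strategy is the paper's: use injectivity of $\varphi_q$ for uniqueness, set $Fr^Y=\varphi_q^{-1}\circ Fr\circ\varphi_t$, check the generators $a,b,c,d$, and split by the parity of $T_n$. The gap is the step you flagged as the main obstacle. As written, you only determine $Fr^Y(d)$ up to an unknown ratio $c_t/c_q$, where $\varphi_v(d)=c_v\,xyz$. So existence and uniqueness are proved, but the stated formula for $Fr^Y(d)$ is not.

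The paper closes this by direct expansion, and the computation is short. Write each of the three projectors on the theta web as $\mathrm{id}+\frac{1}{[2]}e$ and expand:
\begin{itemize}
\item With no $e$, you get the three face-parallel curves, giving $xyz$.
\item An $e$ on one edge merges the two face curves adjacent to that edge into a parallel copy of the third. This gives $x^2$, $y^2$ or $z^2$.
\item Any two $e$'s leave a single trivial loop, contributing $-[2]$ in each of three ways.
\item Three $e$'s leave two trivial loops, contributing $[2]^2$.
\end{itemize}
Hence
\[
\varphi_v(p)=xyz+\frac{1}{[2]_v}(x^2+y^2+z^2)-\frac{3}{[2]_v}+\frac{1}{[2]_v}=xyz+\frac{1}{[2]_v}(x^2+y^2+z^2-2).
\]
Since $\varphi_v(a+b+c+1)=x^2+y^2+z^2-2$, this gives $\varphi_v(d)=xyz$ exactly, for every $v$ with $[2]_v\neq 0$. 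So $c_t=c_q=1$, no correction factor appears, and you do not need to appeal to $[2]_t^2=4$.

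Two small slips. First, $t^2=q^{2n^2}=1$ exactly; if $t^2=-1$ held, then $[2]_t=0$. Second, closing $f_2$ gives $x^2+\frac{1}{[2]}\cdot(-[2])$, so the trivial loop enters with a plus sign, not a minus. Your final value $\varphi(a)=x^2-1$ is nevertheless correct.
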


\begin{proof}
We have that the map $\varphi_q:\mathcal{S}_q^Y(\Sigma_{0,3}) \rightarrow \mathcal{S}_q(\Sigma_{0,3})$ is injective on the three punctured sphere by arguments of \cite{Chb12}. Thus, if we can construct elements $Fr^Y(a),Fr^Y(b), Fr^Y(c)$, and $Fr^Y(d)$ in $ \Y(\Sigma_{0,3})$ satisfying the commutative diagram (\ref{diag: Frobenius pants}), then we will have produced a construction of the desired unique algebra homomorphism $Fr^Y$.

Consider a loop $l \in \{a,b,c\}$. We know that $\varphi_q(l),\varphi_t(l)=l^2-1.$ We compute that

\begin{equation*}
Fr\circ \varphi_t(l)=Fr(l^2-1)=T_n(l)^2-1.
\end{equation*}

On the other hand,

\begin{equation*}
\varphi_q(P_n(l))=P_n(l^2-1)=T_n(l)^2-1.
\end{equation*}

Thus, we can make the diagram commute on the generator $l$ by setting $Fr^Y(l)=P_n(l).$

Next, we will construct the element $Fr^Y(d) \in \Y(\Sigma_{0,3})$ making the diagram commute.

Recall that for $v \in \{t,q\},$ we have $d = p - \frac1{[2]_v}(a + b + c + 1) \in \mathcal{S}_v^Y(\Sigma_{0,3}).$ We first compute that

\begin{equation}
    \varphi_v(p) =\quad \tikz[baseline=0.1ex]{\draw (-0.5,-0.5) rectangle (0.5,0.5); \draw (-2.5,-0.5) rectangle (-1.5,0.5); \draw (1.5,-0.5) rectangle (2.5,0.5); \node (0,0) {2};  \node at (-2, 0) {2}; \node at (2, 0) {2}; \draw (0.25,0.5) .. controls (0.75,1) and (1.25,1) .. (1.75,0.5); \draw (-0.25,0.5) .. controls (-0.75,1) and (-1.25,1) .. (-1.75,0.5); \draw (0.25,-0.5) .. controls (0.75,-1) and (1.25,-1) .. (1.75,-0.5); \draw (-0.25,-0.5) .. controls (-0.75,-1) and (-1.25,-1) .. (-1.75,-0.5); \draw (-2.25,-0.5) .. controls (-0.91,-2) and (0.91,-2) .. (2.25,-0.5); \draw (-2.25,0.5) .. controls (-0.91,2) and (0.91,2) .. (2.25,0.5); \fill[black] (-1, 0) circle (2pt); \fill[black] (1,0) circle (2pt)} \quad = xyz + \frac1{[2]_v}(x^2 + y^2 + z^2 - 2) \in \mathcal{S}_v(\Sigma_{0,3}).
\end{equation}

From this, we deduce the simple formula

\begin{equation*}
\varphi_v(d)=xyz.    
\end{equation*}

Consequently,

\begin{equation*}
Fr \circ \varphi_t(d)=T_n(x)T_n(y)T_n(z).
\end{equation*}

Now, suppose that $n$ is odd. Then $T_n$ is an odd
$n$\raise0.5ex\hbox{\scriptsize th} degree polynomial. Hence define
$\tilde{T_n} := \frac{T_n(x)}{x}$, and note that $\tilde{T_n}$ is an even
polynomial. Since any even polynomial can be written as a polynomial in $x^2 -1$, we set
$O_n$ as the polynomial satisfying $O_n(x^2-1) = \tilde{T_n}(x)$. Then note

\begin{align*}
Fr \circ \varphi_t(d)&=T_n(x)T_n(y)T_n(z)\\
&=xyz\tilde{T}_n(x)\tilde{T}_n(y)\tilde{T}_n(z)\\
&=xyzO_n(x^2-1)O_n(y^2-1)O_n(z^2-1).
\end{align*}

On the other hand,

\begin{equation*}
\varphi_q(dO_n(a)O_n(b)O_n(c))=xyzO_n(x^2-1)O_n(y^2-1)O_n(z^2-1).
\end{equation*}

Therefore, when $n$ is odd, setting $Fr^Y(d)=dO_n(a)O_n(b)O_n(c)$ makes (\ref{diag: Frobenius pants}) commute.

Now, suppose that $n$ is even. Then $T_n$ is an even polynomial, and must be a polynomial in $x^2-1.$ We then set $E_n(x)$ to be the unique polynomial satisfying $E_n(x^2-1)=T_n(x).$

Then 

\begin{equation*}
Fr \circ \varphi_t(d)=Fr(xyz)=T_n(x)T_n(y)T_n(z)=E_n(x^2-1)E_n(y^2-1)E_n(z^2-1).
\end{equation*}

On the other hand,

\begin{equation*}
\varphi_q(E_n(a)E_n(b)E_n(c))=E_n(x^2-1)E_n(y^2-1)E_n(z^2-1).
\end{equation*}

Thus, if $n$ is even, setting $Fr^Y(d)=E_n(a)E_n(b)E_n(c)$ makes (\ref{diag: Frobenius pants}) commute.
\end{proof}

\begin{remark}
One may see that $Fr^Y$ for the pair of pants is injective either by using the monomial basis directly or by using the fact that both $\varphi_t$ and $Fr$ are known to be injective.
\end{remark}
\begin{remark}
Here, we collect some facts about the polynomials $E_n$ and $O_n$ appearing in Theorem \ref{Thm:Frobenius pants}. The polynomials satisfy the following recursive identities.
\begin{align*}
    O_1(x) &= 1, &E_0 &= 2,\\
    O_3(x) &= x - 2, &E_2 &= x - 1,\\
    O_5(x) &= x^2 - 3x + 1, &E_4 &= x^2 - 2x - 1,
\end{align*}
and in general
$$O_{2n+1} = E_{2n} - O_{2n-1}, \qquad E_{2n+2} = xO_{2n+1} - O_{2n-1},$$
which can be seen by applying the recurrence relation $T_n = xT_{n-1} - T_{n-2}$.

It is curious to see that the polynomials $O_n$ also satisfy the same recursion as the polynomials $P_n$. Namely,
\begin{equation}\label{evilcheby-recurrence}
        O_{2n+1}(x) = xO_{2n-1}(x) - xO_{2n-3}(x) + O_{2n-5}(x).
\end{equation}
Indeed, note that \eqref{evilcheby-recurrence} is true if and only if
\begin{equation*}
     \tilde{T}_{2n+1}(x) = (x^2-1)\tilde{T}_{2n-1}(x) - (x^2-1)\tilde{T}_{2n-3}(x) + \tilde{T}_{2n-5}(x),
\end{equation*}
or, equivalently,
\begin{equation}\label{eq:evil-Cheby-to-normalCheby}
    T_{2n+1}(x) = (x^2 - 1)T_{2n-1}(x) - (x^2 - 1)T_{2n-3}(x) + T_{2n-5}(x).
\end{equation}
Now recall that $T_n$ is the unique polynomial satisfying $T_n(q + q^{-1}) = q^n + q^{-n}$ for every $q \in \mathbb{C}\setminus\{0\}$. Hence, it suffices to verify \eqref{eq:evil-Cheby-to-normalCheby} when $x = q + q^{-1}$. Indeed,
\begin{multline*}
    q^{2n+1} + q^{-2n-1} = (q^2 + 1 + q^{-2})(q^{2n-1} + q^{-2n + 1}) \\- (q^2 + 1 + q^{-2})(q^{2n-3} + q^{-2n + 3}) + (q^{2n-5} + q^{-2n+5}),
\end{multline*}
so we conclude that \eqref{evilcheby-recurrence} holds.
\end{remark}


\subsection{Acknowledgments}

This work was supported by National Science Foundation DMS-2136090 and was completed during the Geometry \& Topology REU at UCLA during Summer 2025. We thank Ko Honda for overseeing the program, and we thank Roman Krutowski, Jung Joo Suh, and Yan Tao for their mentorship and support.

\subsection{Tool use}
The authors acknowledge the use of ChatGPT for help creating figures, for translating 
Asymptote figures into Tikz figures, and for finding BibTex code containing bibliographical information for some of the works cited. All figures and references have been verified by the authors, who take sole responsibility for the contents of the manuscript.

\bibliographystyle{amsalpha}
\bibliography{references.bib}

\end{document}